\numberwithin{equation}{section}
\newtheorem{theorem}{Theorem}[section]
\newtheorem{corollary}[theorem]{Corollary}
\newtheorem{conjecture}[theorem]{Conjecture}
\newtheorem{lemma}[theorem]{Lemma}
\newtheorem{proposition}[theorem]{Proposition}
\newtheorem{definition}[theorem]{Definition}
\newtheorem{remark}[theorem]{Remark}
\newtheorem*{remark*}{Remark}
\newtheorem*{claim*}{Claim}
\newtheorem{problem}[theorem]{Problem}
\newtheorem*{assertion*}{Mahler's Assertion}
\DeclareMathOperator{\Haus}{\mathcal{H}}
\DeclareMathOperator{\Prob}{\mathbb{P}}
\DeclareMathOperator{\diam}{diam}
\DeclareMathOperator{\dimh}{dim_H}
\renewcommand{\epsilon}{\varepsilon}
\renewcommand{\Bbb}[1]{\mathbb{#1}}
\newcommand{\N}{{\Bbb N}}         
\newcommand{\I}{{\Bbb I}}
\newcommand{\R}{{\Bbb R}}        
\newcommand{\Z}{{\Bbb Z}}         
\newcommand{\T}{{\Bbb T}}	
\newcommand{\cA}{{\mathcal A}}
\newcommand{\cB}{{\mathcal B}}
\newcommand{\cH}{{\mathcal H}}
\newcommand{\cL}{{\mathcal L}}
\newcommand{\cM}{{\mathcal M}}
\newcommand{\cR}{{\mathcal R}}
\newcommand{\cW}{{\mathcal W}}
\newcommand{\Om}{\Omega}
\newcommand{\U}{\Upsilon}
\newcommand{\La}{\Lambda}
\newcommand{\ie}{{\it i.e.}\/ }
\newcommand{\dist}{\operatorname{dist}}
\newcommand{\vv}[1]{{\mathbf{#1}}}
\newcommand{\x}{\mathbf{x}}
\newcommand{\y}{\mathbf{y}}
\newcommand{\p}{\mathbf{p}}
\newcommand{\q}{\mathbf{q}}
\newcommand{\veca}{\mathbf{a}}
\newcommand{\vecb}{\mathbf{b}}
\newcommand{\0}{\mathbf{0}}
\begin{document}

\title{The Mass Transference Principle: Ten Years On}

\author{Demi Allen}
\address{Demi Allen\\
	Department of Mathematics \\
         University of York \\
         YO10 5DD \\
         UK}
\email{dda505@york.ac.uk}

\author{Sascha Troscheit}
\address{Sascha Troscheit \\
	Department of Pure Mathematics \\		
		University of Waterloo \\
		200 University Avenue West \\
		Waterloo, ON \\
		N2L 3G1 \\
		Canada}
\email{stroscheit@uwaterloo.ca}

\subjclass[2010]{Primary 11J83, 28A78; Secondary 11K60, 11J13, 60D05.}
\keywords{Mass Transference Principle, Diophantine approximation, linear forms, random covering sets}
\date{\today}

\begin{abstract}
  In this article we discuss the Mass Transference Principle due to Beresnevich and Velani and survey several generalisations and variants, both deterministic and random. Using a Hausdorff measure analogue of the inhomogeneous Khintchine--Groshev Theorem, proved recently via an extension of the Mass Transference Principle to systems of linear forms, we give an alternative proof of a general inhomogeneous Jarn\'{\i}k--Besicovitch Theorem which was originally proved by Levesley. We additionally show that without monotonicity Levesley's theorem no longer holds in general. 
  Thereafter, we discuss recent advances by Wang, Wu and Xu towards mass transference principles where one transitions from $\limsup$ sets defined by balls to $\limsup$ sets defined by rectangles (rather than from ``balls to balls'' as is the case in the original Mass Transference Principle). Furthermore, we consider mass transference principles for transitioning from rectangles to rectangles and extend known results using a slicing technique.
  We end this article with a brief survey of random analogues of the Mass Transference Principle.
\end{abstract}

\maketitle

\section{Introduction}
Since its discovery by Beresnevich and Velani in 2006, the Mass Transference Principle has become an important tool in metric number theory.
Originally motivated by the desire for a Hausdorff measure version of the Duffin--Schaeffer conjecture, the Mass Transference Principle allows us to transfer a Lebesgue measure statement for a $\limsup$ set defined by a sequence of balls in $\R^k$ to a Hausdorff measure statement for a related $\limsup$ set. Over the past few years a number of generalisations have been proved and more general settings have been considered.
In this article we survey several of these recent developments and consider some of their applications, mostly in the field of metric number theory.

\subsection{Notation and Basic Definitions}

Throughout, by a \emph{dimension function} we mean a continuous, non-decreasing function $f \, : \, \R^+ \to \R^+ $ such that $f(r)\to 0$ as $r\to 0 \, $. Recall that $\R^+ = [0,\infty)$. If there exists a constant $\lambda > 1$ such that for $x > 0$ we have $f(2x) \leq \lambda f(x)$ then we say that $f$ is \emph{doubling}. 

\begin{definition}
Let $F\subseteq\R^{k}$ and let $\delta>0$. The \emph{$\delta$-Hausdorff pre-measure} of $F$ with respect to the dimension function $f$, denoted $\Haus_{\delta}^{f}(F)$, is given by
\[
\Haus_{\delta}^{f}(F)=\inf\left\{\sum_{i=1}^{\infty}f(\diam(U_{i})) \;:\; F\subseteq \bigcup_{i=1}^{\infty} U_{i} \text{ and }\diam(U_{i}) \leq\delta \text{ for all }i\right\},
\]
 where the infimum is taken over all countable collections $\{U_{i}\}$ of open sets. The \emph{Hausdorff content} $\Haus_{\infty}^{f}$ with respect to $f$ is
\[
\Haus_{\infty}^{f}(F)=\inf_{\delta>0}\Haus_{\delta}^{f}(F).
\]
The \emph{Hausdorff measure} $\Haus^{f}$ with respect to $f$ is defined by
\[
\Haus^{f}(F)=\lim_{\delta\to0}\Haus^{f}_{\delta}(F).
\]
\end{definition}
We note that for all dimension functions $f$, and all bounded subsets $F\subset\R^{k}$, the Hausdorff content satisfies $\Haus_{\infty}^{f}(F)\leq f(\diam(F))$ and for all $\delta>0$
\[
\Haus_{\infty}^{f}(F)\leq\Haus_{\delta}^{f}(F)<\infty.
\]

We also observe that, for a given $f$, the $\delta$-Hausdorff pre-measure $\Haus_{\delta}^{f}(F)$ is non-decreasing as $\delta\to0$. So, using monotone convergence, the limit $\lim_{\delta\to0}\Haus_{\delta}^{f}(F)$ exists but may be infinite. 

Often we are interested in Hausdorff dimension and the classical Hausdorff $s$-measure. The Hausdorff $s$-measure, which we will usually denote by $\cH^s$, can be obtained by letting $f(r)=r^{s}$. The Hausdorff dimension of a set $F$ is then defined as follows.

\begin{definition}
Let $F\subseteq\R^{k}$. The \emph{Hausdorff dimension} of $F$ is
\[
\dim_{H}F=\inf\big\{s>0 \; : \; \Haus^{s}(F)=0\big\}.
\] 
\end{definition}

One interesting property of the Hausdorff measure is that for subsets of $\R^k$, $\cH^k$ is comparable to the $k$-dimensional Lebesgue measure. For a set $X \subset \R^k$ we denote the $k$-dimensional Lebesgue measure by $|X|$. 
Lebesgue null sets, \ie sets $X$ with $\lvert X\rvert=0$, can still have intricate geometric structure and in many cases we are able to appeal to Hausdorff dimension to discriminate between their respective `sizes'.
For further information regarding Hausdorff measures and dimension we refer the reader to \cite{Falconer ref,Mattila ref, Rogers book}.
Finally, we recall the notion of a $\limsup$ set.
\begin{definition}
Let $(A_{i})_{i \in \N}$ be a collection of subsets of a set $Y$. Then 
\[
\limsup_{i}A_{i}=\bigcap_{k=1}^{\infty}\bigcup_{i=k}^{\infty}A_{i}.
\]
Equivalently,
\[
\limsup_{i}A_{i}=\{x\in Y\,:\, x\in A_{i}\text{ for infinitely many }i\in\N\}.
\]
\end{definition}

\section{The Mass Transference Principle}
The main object of study in this article is the \emph{Mass Transference Principle} and its generalisations and variants. The original Mass Transference Principle was developed by Beresnevich and Velani in \cite{BV MTP} and was motivated by a conjecture of Duffin and Schaeffer. 

Given an approximating function $\psi: \N \to \R^+$, for $k \in \N$, let 
\[\cA_k(\psi):= \left\{\x \in \I^k : \max_{1 \leq i \leq k}\left|x_i-\frac{p_i}{q}\right| < \frac{\psi(q)}{q} \text{ for infinitely many } (\p,q) \in \Z^k \times \N\right\},\]
where $\mathbf{p}=(p_{1},p_{2},\dots,p_{k})$, be the \emph{simultaneously $\psi$-approximable points} in the unit cube, $\I^k = [0,1]^k$, and consider the following classical theorem by Khintchine~\cite{kh}. 

\begin{theorem}[Khintchine~\cite{kh}] \label{Khintchine's Theorem}
	Let $\psi: \N \to \R^+$ be an approximating function. Then
	\[
	 \lvert \cA_1(\psi)\rvert=\begin{cases}
	 0 &\text{if }\sum_{q=1}^{\infty}  {\psi(q)}<\infty,\\&\\
	 1& \text{if }\sum_{q=1}^{\infty}  {\psi(q)}=\infty\text{ and $\psi$ is monotonic.}
	 \end{cases}
	 \]
\end{theorem}

Khintchine also extended this result to the simultaneously $\psi$-approximable points in higher dimensions.

\begin{theorem}[Khintchine~\cite{Khintchine ref}] \label{Khintchine's Simultaneous Theorem}
Let $\psi: \N \to \R^+$ be an approximating function. Then
	\[
	 \lvert \cA_k(\psi)\rvert=\begin{cases}
	 0 &\text{if }\sum_{q=1}^{\infty}  {\psi(q)^k}<\infty,\\&\\
	 1& \text{if }\sum_{q=1}^{\infty}  {\psi(q)^k}=\infty\text{ and $\psi$ is monotonic.}
	 \end{cases}
	 \]
\end{theorem}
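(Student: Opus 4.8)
The plan is to prove the two cases using, respectively, the Borel--Cantelli Lemma and a quantitative divergence counterpart of it. For $q \in \N$ set
\[
E_q := \bigcup_{\p \in \Z^k} \left\{ \x \in \I^k : \max_{1 \le i \le k} \left| x_i - \frac{p_i}{q} \right| < \frac{\psi(q)}{q} \right\},
\]
so that $\cA_k(\psi) = \limsup_{q \to \infty} E_q$; provided $\psi(q) < 1$ — which holds for all large $q$ in the convergence case, and which we will arrange in the divergence case — only the $\p \in \{0,1,\dots,q\}^k$ (up to a couple of boundary choices per coordinate) contribute, so $E_q$ is a union of $\ll q^k$ axis-parallel cubes of side $2\psi(q)/q$. \emph{Convergence case.} Since $\sum_q \psi(q)^k < \infty$ forces $\psi(q) \to 0$, we have $|E_q| \le (q+2)^k(2\psi(q)/q)^k \ll \psi(q)^k$ for all large $q$, so $\sum_q |E_q| < \infty$ and the Borel--Cantelli Lemma gives $|\cA_k(\psi)| = |\limsup_q E_q| = 0$. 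This direction uses no monotonicity.

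\emph{Divergence case.} If $\psi(q) \ge 1/2$ for infinitely many $q$, then by monotonicity $E_q = \I^k$ for all large $q$ and the statement is immediate; so assume $\psi(q) < 1/2$ for every $q$. Then the cubes making up $E_q$ are pairwise disjoint, so $|E_q| \asymp \psi(q)^k$ and $\sum_q |E_q| = \infty$. The set $\cA_k(\psi)$ is a $\limsup$ set which, modulo null sets, is invariant under rational translations, so the Cassels--Gallagher zero--one law gives $|\cA_k(\psi)| \in \{0,1\}$; it therefore suffices to prove $|\cA_k(\psi)| > 0$. For this I would apply the divergence form of the Borel--Cantelli Lemma: if $\sum_q |E_q| = \infty$ and the sets $E_q$ are \emph{quasi-independent on average}, meaning that
\[
\sum_{1 \le q,r \le Q} |E_q \cap E_r| \ll \left( \sum_{1 \le q \le Q} |E_q| \right)^{2} \qquad \text{for infinitely many } Q \in \N,
\]
then $|\limsup_q E_q| > 0$.

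Verifying quasi-independence on average is the heart of the matter and the step I expect to present the main difficulty, as this is where the geometry of rational points and the monotonicity of $\psi$ enter. The diagonal terms $q = r$ contribute $\sum_{q \le Q}|E_q|$, which is negligible against its own square because the series diverges. For $q \ne r$, a point of $E_q \cap E_r$ requires, in each coordinate $i$, integers $p_i, p_i'$ with $|p_i/q - p_i'/r| < \psi(q)/q + \psi(r)/r$; since two such fractions either coincide or differ by at least $1/(qr)$, the number of admissible pairs $(p_i,p_i')$ in one coordinate is $\ll r\psi(q) + q\psi(r) + \min\{q,r\}$, and each contributing pair of cubes overlaps in a box of volume $\ll (\min\{\psi(q)/q,\psi(r)/r\})^k$. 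Raising the coordinate-wise count to the $k$-th power, multiplying by this per-pair volume, and summing over $q,r \le Q$ — using the monotonicity of $\psi$ (for instance by comparing $\psi$ with its values on a dyadic mesh) to dominate the resulting double sum by $\ll(\sum_{q \le Q}\psi(q)^k)^2$, and standard divisor-sum estimates such as $\sum_{q,r\le Q}\gcd(q,r)\ll Q^2\log Q$ to absorb the $\min\{q,r\}$ terms — yields the displayed bound along a suitable sequence of $Q$. The divergence Borel--Cantelli Lemma then gives $|\cA_k(\psi)| > 0$, and the zero--one law upgrades this to $|\cA_k(\psi)| = 1$. As an alternative to this route, the divergence case also follows from the Beresnevich--Velani theory of ubiquitous systems: Dirichlet's theorem shows the rationals $\{\p/q : q \le N\}$ form a ubiquitous system with the appropriate density function, and ubiquity together with the divergence of $\sum_q\psi(q)^k$ gives $|\cA_k(\psi)| = 1$ directly. (For $k \ge 2$ one can in fact drop the monotonicity hypothesis, by the Pollington--Vaughan solution of the higher-dimensional Duffin--Schaeffer problem, though that lies considerably deeper.)
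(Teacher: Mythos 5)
The paper offers no proof of Theorem \ref{Khintchine's Simultaneous Theorem}: it is quoted as a classical result of Khintchine with a citation, so there is nothing in-paper to compare your argument against. Judged on its own terms, your outline follows the standard modern strategy and its architecture is sound. The convergence half is complete ($|E_q|\ll\psi(q)^k$ plus Borel--Cantelli, no monotonicity needed), and for the divergence half the reduction to positive measure via the Cassels--Gallagher zero--one law and the appeal to the divergence Borel--Cantelli lemma under quasi-independence on average are both legitimate steps.

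The gap is exactly where you say it is, but it is a genuine gap rather than a computation you have fully set up. Carrying out your coordinate-wise count gives, for $q\neq r$,
\[
|E_q\cap E_r|\ll\Bigl(\psi(q)\psi(r)+\gcd(q,r)\,\min\{\psi(q)/q,\psi(r)/r\}\Bigr)^{k},
\]
where the point is that the admissible nonzero values of $pr-p'q$ are multiples of $\gcd(q,r)$, so the correct secondary term is $\gcd(q,r)$ rather than $\min\{q,r\}$, and one must exploit this. The product term sums to $\ll\bigl(\sum_{q\le Q}\psi(q)^k\bigr)^2$ unconditionally; the entire content of the divergence case is the estimate $\sum_{q\neq r\le Q}\gcd(q,r)^k\min\{\psi(q)/q,\psi(r)/r\}^k\ll\bigl(\sum_{q\le Q}\psi(q)^k\bigr)^2$, which is precisely where monotonicity must enter and which is \emph{false} for general $\psi$ when $k=1$ --- that failure is the Duffin--Schaeffer phenomenon. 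The tool you cite, $\sum_{q,r\le Q}\gcd(q,r)\ll Q^2\log Q$, cannot do this job: it carries no information about $\psi$, and $Q^2\log Q$ times the relevant volume factor is not in general dominated by $\bigl(\sum_{q\le Q}\psi(q)^k\bigr)^2$ when that sum diverges slowly. So the decisive estimate is asserted rather than proved. Your fallback route via ubiquity (Dirichlet's theorem makes the rationals a ubiquitous system, and the framework of \cite{BDV limsup sets} then yields the divergence case for monotonic $\psi$) is correct and is essentially how the modern literature establishes the theorem; if you want a complete argument, that is the route to finish.
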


In the one-dimensional case Duffin and Schaeffer~\cite{Duffin-Schaeffer ref} constructed a counter-example showing that the full Lebesgue measure statement can fail for non-monotonic $\psi$. They also posed a conjecture on what should be true when considering general (not necessarily monotonic) approximating functions.

Given an approximating function $\psi: \N \to \R^+$ and an integer $k \geq 1$ let us denote by $\cA_k'(\psi)$ the set of points $\x \in \I^k$ such that 
\begin{align} \label{simultaneous inequality}
\left|\x - \frac{\p}{q}\right| < \frac{\psi(q)}{q}
\end{align}
for infinitely many $(\p,q) \in \Z^k \times \N$ with $\gcd(\p,q) := \gcd(p_1,\dots,p_k,q) = 1$.

\begin{conjecture}[Duffin--Schaeffer Conjecture~\cite{Duffin-Schaeffer ref}]\label{conj:duffinschaeffer1}
Let $\psi: \N \to \R^+$ be any approximating function and denote by $\phi(q)$ the Euler function. If
\[\sum_{q=1}^{\infty}\phi(q)\frac{\psi(q)}{q}=\infty \quad \text{then} \quad \lvert \cA_1'(\psi)\rvert=1.\]
\end{conjecture}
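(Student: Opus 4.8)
The plan is to reduce the conjecture to a single quasi-independence estimate for a natural family of sets and then to attack that estimate through a combinatorial analysis of how intervals around reduced fractions overlap. First, by Gallagher's zero--one law the Lebesgue measure of $\cA_1'(\psi)$ is either $0$ or $1$, so it suffices to prove $\lvert\cA_1'(\psi)\rvert>0$. After truncating $\psi$ so that $\psi(q)\le q/2$ (which affects neither the hypothesis nor the conclusion), set
\[
E_q \;=\; \bigcup_{\substack{0\le p\le q\\ \gcd(p,q)=1}} \left(\frac{p}{q}-\frac{\psi(q)}{q},\;\frac{p}{q}+\frac{\psi(q)}{q}\right),
\]
so that $\cA_1'(\psi)=\limsup_q E_q$ and $\lvert E_q\rvert \asymp \phi(q)\psi(q)/q$; the divergence hypothesis thus reads $\sum_q\lvert E_q\rvert=\infty$. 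By the Chung--Erd\H{o}s inequality (equivalently, the quantitative divergence Borel--Cantelli lemma),
\[
\lvert\limsup_q E_q\rvert \;\ge\; \limsup_{Q\to\infty}\frac{\big(\sum_{q\le Q}\lvert E_q\rvert\big)^2}{\sum_{q,r\le Q}\lvert E_q\cap E_r\rvert},
\]
so the whole problem reduces to the \emph{quasi-independence on average} bound
\[
\sum_{q,r\le Q}\lvert E_q\cap E_r\rvert \;\ll\; \Big(\sum_{q\le Q}\lvert E_q\rvert\Big)^{2}
\]
along a sequence of $Q\to\infty$. It is precisely the coprimality condition that makes $\lvert E_q\rvert$ proportional to the erratically behaved ratio $\phi(q)/q$ rather than simply to $\psi(q)$; this is what both rescues the statement (the analogue without coprimality fails for non-monotonic $\psi$, by Duffin and Schaeffer's original example) and makes it delicate.

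Second, I would make the overlaps explicit. For $q\ne r$ the intervals centred at $p/q$ and $p'/r$ meet only when $\lvert pr-p'q\rvert$ is small, and counting solutions leads, following Pollington and Vaughan, to an arithmetic bound of the shape
\[
\lvert E_q\cap E_r\rvert \;\ll\; \lvert E_q\rvert\,\lvert E_r\rvert\cdot\Delta(q,r) \;+\; (\text{a diagonal term, negligible on average}),
\]
where $\Delta(q,r)$ is an explicit factor built from $\gcd(q,r)$ and from how the support of $\psi$ is distributed among the prime factors of $q$ and of $r$. After a dyadic decomposition of the values of $\psi$, and the removal of those denominators on which $\psi$ is spread pathologically thinly over many small primes, the task becomes to show that a weighted sum of the factors $\Delta(q,r)$ over pairs drawn from a finite set $\cD$ of admissible denominators does not exceed $\big(\sum_{q\in\cD}\lvert E_q\rvert\big)^{2}$ by more than an absolute constant.

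Third --- and here lies the genuine difficulty --- one must prove that weighted GCD-sum estimate. Naive tools (Cauchy--Schwarz, or pretending the $E_q$ behave like independent events) are insufficient, since for integers with many small prime factors the overlaps are genuinely large and could a priori absorb a positive proportion of the total mass; excluding this is an ``anatomy of integers'' phenomenon that resisted any soft resolution for decades. The route I would take is that of Koukoulopoulos and Maynard: encode the relevant pairs $(q,r)$, together with the prime factorisations of their greatest common divisor, as a weighted bipartite \emph{GCD graph}, so that the quantity to be bounded becomes essentially an edge-count ratio, and then run an iterative \emph{compression} argument --- restricting the exponent to which each prime may occur, equalising weights, passing to suitably dense subgraphs --- that replaces a given GCD graph by a structurally simpler one without decreasing the critical ratio, until one reaches graphs rigid enough that the desired inequality can be verified by hand. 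I expect this graph-theoretic core to be by far the main obstacle: the reductions in the first two steps are essentially classical, whereas the final step is the substance of the recent proof and appears to demand exactly this extremal-combinatorial machinery rather than any purely analytic argument.
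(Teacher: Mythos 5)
First, a point of comparison: the paper offers no proof of this statement at all --- it is recorded as an open conjecture (Conjecture \ref{conj:duffinschaeffer1}), with only the surrounding remarks that the higher-dimensional analogue for $k>1$ is due to Pollington and Vaughan and that the one-dimensional case is known under monotonicity of $\psi$. So there is no in-paper argument to measure your proposal against. Your strategy is, however, exactly the route by which the conjecture was eventually resolved by Koukoulopoulos and Maynard: Gallagher's zero--one law, the second-moment (Chung--Erd\H{o}s) reduction to quasi-independence on average, the Pollington--Vaughan overlap estimates, and finally the GCD-graph compression argument. The first two reductions are indeed classical and essentially correct as you state them, modulo the standard caveat that one truncates to $\psi(q)\le 1/2$ rather than merely $\psi(q)\le q/2$ in order to make the intervals constituting $E_q$ pairwise disjoint and so obtain $\lvert E_q\rvert\asymp\phi(q)\psi(q)/q$, and that one must verify (as you implicitly do) that such truncations preserve divergence of $\sum_q\phi(q)\psi(q)/q$.

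The genuine gap is the one you yourself flag: the entire content of the theorem is concentrated in the weighted GCD-sum estimate of your third step, and your proposal describes the shape of the Koukoulopoulos--Maynard compression argument without carrying out any of it. As written, the claim that a weighted sum of the factors $\Delta(q,r)$ over admissible pairs does not exceed $\bigl(\sum_{q\in\cD}\lvert E_q\rvert\bigr)^2$ by more than an absolute constant is precisely the statement that resisted proof for eight decades; naming the machinery (GCD graphs, restriction of prime exponents, weight equalisation, passage to dense subgraphs) is not a substitute for the iteration itself, whose quality-increment and termination lemmas are long and delicate and are where all previous partial approaches (extra divergence hypotheses, conditions on the anatomy of the denominators) broke down. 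The proposal should therefore be read as a correct and well-informed roadmap rather than a proof: steps one and two are sound reductions, step three accurately locates the difficulty, but the difficulty is not resolved.
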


For $k \geq 2$ the analogous conjecture was formulated by Sprind\v{z}uk \cite[Chapter 1, Section 8]{Sprindzuk ref}. The conjecture depends again on slightly different coprimality conditions. Therefore, for any approximating function $\psi: \N \to \R^+$, let us denote by $\cA_k''(\psi)$ the set of points $\x \in \I^k$ for which the inequality (\ref{simultaneous inequality}) is satisfied for infinitely many $(\p,q) \in \Z^k \times \N$ which also have $\gcd(p_i,q) = 1$ for all $1 \leq i \leq k$.

\begin{conjecture}[Higher-Dimensional Duffin--Schaeffer Conjecture~\cite{Sprindzuk ref}] \label{higher dimensional Duffin-Schaeffer conjecture}
Let $\psi: \N \to \R^+$ be any approximating function and denote by $\phi(q)$ the Euler function. If
\[ \sum_{q=1}^{\infty}\phi(q)^{k}\frac{\psi(q)^{k}}{q^{k}}=\infty \quad \text{then} \quad \lvert \cA_k''(\psi)\rvert=1.\]
\end{conjecture}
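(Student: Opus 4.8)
The plan is to follow the approach of Pollington and Vaughan, who proved this conjecture for every $k \ge 2$; the remaining case $k = 1$ coincides with the original one-dimensional Duffin--Schaeffer conjecture and is substantially harder, having been resolved only recently by Koukoulopoulos and Maynard, so I shall point to where the $k \ge 2$ argument breaks down at the end. For $q \in \N$ with $\psi(q) > 0$ put
\[
S_q \;:=\; \bigcup_{\substack{0 \le a \le q \\ \gcd(a, q) = 1}} \left( \frac{a}{q} - \frac{\psi(q)}{q},\; \frac{a}{q} + \frac{\psi(q)}{q} \right) \cap \I, \qquad A_q := (S_q)^{k} \subseteq \I^{k}.
\]
Since the coprimality conditions are imposed one coordinate at a time, $\cA_k''(\psi) = \limsup_{q \to \infty} A_q$ and, crucially, $A_q \cap A_{q'} = (S_q \cap S_{q'})^{k}$. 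By Gallagher's zero--one law $|\cA_k''(\psi)| \in \{0, 1\}$, so it suffices to prove $|\cA_k''(\psi)| > 0$. For that I would invoke the divergence Borel--Cantelli lemma in its quantitative (Kochen--Stone) form: if $\sum_{q} |A_q| = \infty$ and
\[
\sum_{q, q' = 1}^{Q} |A_q \cap A_{q'}| \;\ll\; \left( \sum_{q = 1}^{Q} |A_q| \right)^{2}
\]
for infinitely many $Q$, then $\big| \limsup_q A_q \big| > 0$, which together with the zero--one law gives $|\cA_k''(\psi)| = 1$.

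The first input is the measure estimate $|A_q| = |S_q|^{k} \asymp \min\{1,\, (\phi(q)\psi(q)/q)^{k}\}$: the set $S_q$ is a union of $\phi(q)$ intervals of length $2\psi(q)/q$ centred at the reduced fractions with denominator $q$, and since these centres are essentially equidistributed in $\I$ the union has measure $\asymp \min\{1,\, \phi(q)\psi(q)/q\}$ whether or not the intervals overlap. (If preferred, a preliminary reduction to the case $\psi(q) \le 1/2$ makes the intervals disjoint and gives $|S_q| \asymp \phi(q)\psi(q)/q$ directly.) An elementary observation then shows that $\sum_q \min\{1,\, (\phi(q)\psi(q)/q)^{k}\} = \infty$ whenever $\sum_q (\phi(q)\psi(q)/q)^{k} = \infty$, so the divergence hypothesis is precisely the statement $\sum_q |A_q| = \infty$.

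The real work is the quasi-independence inequality, and through $|A_q \cap A_{q'}| = |S_q \cap S_{q'}|^{k}$ it comes down to controlling the one-dimensional overlaps $|S_q \cap S_{q'}|$ for $q \ne q'$. A point of $S_q \cap S_{q'}$ lies within $\psi(q)/q$ of some reduced fraction $a/q$ and within $\psi(q')/q'$ of some reduced fraction $a'/q'$, so one must count pairs $(a, a')$ with $\gcd(a, q) = \gcd(a', q') = 1$ and $|a q' - a' q| < \psi(q) q' + \psi(q') q$, weighting each by the overlap length $\asymp \min\{\psi(q)/q,\, \psi(q')/q'\}$. Grouping by the value of $a q' - a' q$ --- a multiple of $d := \gcd(q, q')$ --- there are $\asymp d$ integer solutions for each relevant value, of which on average a proportion $\phi(q)\phi(q')/(qq')$ satisfy the coprimality constraints; carrying this average through produces the ``expected'' bound $|S_q \cap S_{q'}| \ll |S_q|\,|S_{q'}|$. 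The difficulty is that these $\asymp d$ solutions are badly distributed with respect to coprimality when $d$ has many small prime factors, so the true count fluctuates about its average, and the resulting error term is, in dimension one, genuinely comparable to the main term for $\psi$ concentrated on highly composite denominators. What rescues $k \ge 2$ is that this error enters $|A_q \cap A_{q'}|$ only through its $k$-th power: the bookkeeping can be arranged so that, after summing over $q, q' \le Q$ and applying the usual estimates for the relevant divisor and Euler totient sums, the total error contribution is $\ll \big( \sum_{q \le Q} |A_q| \big)^{2} + \sum_{q \le Q} |A_q|$, the last term being negligible along a sequence of $Q$ on which the first term grows. Establishing this GCD-sensitive overlap bound, and especially taming its error term, is the main obstacle.

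Combining the divergence of $\sum_q |A_q|$ with the quasi-independence inequality in the Kochen--Stone estimate yields $|\cA_k''(\psi)| > 0$, and the zero--one law upgrades this to $|\cA_k''(\psi)| = 1$. The argument genuinely collapses at $k = 1$, where the error term is not damped by any exponent and can dominate; recovering the one-dimensional statement then requires exploiting cancellation across many denominators at once, which is exactly the content of the Koukoulopoulos--Maynard GCD-graph method. Finally, in keeping with the theme of this article, I note that once the Lebesgue statement above is in hand, the corresponding Hausdorff-measure refinement of the Duffin--Schaeffer statement in dimension $k$ follows from it by an application of the Mass Transference Principle of Beresnevich and Velani.
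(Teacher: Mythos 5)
The statement you are addressing is presented in the paper as a \emph{conjecture}, not a theorem: the paper offers no proof, and merely records that the case $k>1$ was settled by Pollington and Vaughan, while the case $k=1$ is the original Duffin--Schaeffer conjecture (open at the time of writing). So there is no in-paper argument to compare yours against; what you have written is an outline of the Pollington--Vaughan proof, and your scaffolding is the correct one. The reduction $\cA_k''(\psi)=\limsup_q A_q$ with $A_q=(S_q)^k$ and the identity $A_q\cap A_{q'}=(S_q\cap S_{q'})^k$, the appeal to a $k$-dimensional Gallagher zero--one law to upgrade positive measure to full measure, the Kochen--Stone second-moment criterion, and the observation that the GCD-sensitive error in the overlap estimate is damped by the $k$-th power when $k\ge 2$ --- all of this is faithful to the actual proof.

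However, as a proof the proposal has a genuine gap, and you flag it yourself: the quasi-independence inequality $\sum_{q,q'\le Q}|S_q\cap S_{q'}|^k\ll\bigl(\sum_{q\le Q}|A_q|\bigr)^2$ is asserted, not established. Everything difficult in Pollington--Vaughan lives precisely there: one needs the explicit overlap bound of the shape $|S_q\cap S_{q'}|\ll \phi(q)\phi(q')\psi(q)\psi(q')(qq')^{-1}\cdot P(q,q')$, where $P(q,q')$ is a product over primes dividing $qq'/\gcd(q,q')^2$ exceeding a threshold depending on $\psi$, and then the delicate summation showing that the $q,q'$ with $P(q,q')$ large contribute $\ll(\sum|A_q|)^2$ after raising to the $k$-th power. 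The sentence ``the bookkeeping can be arranged so that \dots'' is exactly the content of the theorem, so the proposal is an accurate roadmap rather than a proof. Two smaller points: the reduction to $\psi(q)\le 1/2$ needs a word of justification (one must check that truncating $\psi$ preserves divergence of $\sum\phi(q)^k\psi(q)^k q^{-k}$, using $\phi(q)/q\gg(\log\log q)^{-1}$, or else handle the subsequence where $\psi(q)>1/2$ separately); and since the stated conjecture includes $k=1$, your argument --- like Pollington--Vaughan's --- proves only the $k\ge 2$ part, which is all the paper claims is known, but is worth stating explicitly as a restriction rather than a remark at the end.
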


For $k>1$ Sprind\v{z}uk's conjecture (Conjecture \ref{higher dimensional Duffin-Schaeffer conjecture}) was proved in the affirmative by Pollington and Vaughan~\cite{Pollington90}.

Finding a general Hausdorff measure analogue of the Duffin--Schaeffer conjecture inspired the Mass Transference Principle that we will now state.
Let $f$ be a dimension function and $\cH^f(\,\cdot\,)$ denote Hausdorff $f$-measure. Given a ball $B = B(x,r)$ in $\R^k$ of radius $r$ centred at $x$ let $B^f = B(x,f(r)^{\frac{1}{k}})$. We write $B^{s}$ instead of $B^{f}$ if $f(x) = x^s$ for some $s > 0$. In particular, we have $B^k = B$. 

\begin{theorem}[{\bf Mass Transference Principle}, Beresnevich -- Velani \cite{BV MTP}]\label{mtp theorem}
	Let $\{B_i=B(x_{i},r_{i})\}_{i\in\N}$ be a sequence of balls in $\R^k$ with
	$r_{i}\to 0$ as $i\to\infty$. Let $f$ be a dimension function
	such that $x^{-k}f(x)$ is monotonic and let $\Omega$ be a ball in $\R^k$. Suppose that, for any ball $B$ in $\Omega$,
	\[ \cH^k\big(\/B\cap\limsup_{i\to\infty}B^f_i{}\,\big)=\cH^k(B) \ .\]
	Then, for any ball $B$ in $\Omega$,
	\[ \cH^f\big(\/B\cap\limsup_{i\to\infty}B^k_i\,\big)=\cH^f(B) \ .\]
\end{theorem}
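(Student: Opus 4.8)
The plan is to reduce the $f$-measure statement to the Lebesgue-measure hypothesis by constructing, inside each ball $B\subseteq\Omega$, a ``Cantor-like'' subset of $B\cap\limsup_i B_i^k$ which is large with respect to $\cH^f$, and then to invoke a mass distribution principle. First I would fix a ball $B\subseteq\Omega$ and, without loss of generality, reduce to showing $\cH^f\bigl(B\cap\limsup_i B_i^k\bigr)=\infty$ (or at least positive), since the upper bound $\cH^f\bigl(B\cap\limsup_i B_i^k\bigr)\le\cH^f(B)$ is automatic and a standard argument then upgrades positivity to full measure by considering arbitrarily small sub-balls of $B$ and using that the limsup set is invariant under discarding finitely many $B_i$. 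The monotonicity assumption on $x^{-k}f(x)$ splits the problem into two regimes: if $x^{-k}f(x)$ is bounded away from $0$ (essentially $f(x)\asymp x^k$), the statement is nearly a tautology from the hypothesis; the substantive case is when $x^{-k}f(x)\to\infty$ as $x\to 0$, i.e. $f(r_i)^{1/k}\gg r_i$, so the ``scaled-up'' balls $B_i^f$ are genuinely fatter than the $B_i^k=B_i$.

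The heart of the argument is an iterative construction. Working inside $B$, I would use the Lebesgue hypothesis $\cH^k\bigl(B\cap\limsup_i B_i^f\bigr)=\cH^k(B)$ together with a Vitali-type covering lemma to extract, at each stage $n$, a finite disjoint collection of balls $B_{i}^f$ contained in the previously selected balls that cover a definite proportion of the available mass. Crucially, one then \emph{shrinks} each chosen ball $B_i^f$ back down: inside $B_i^f$ one can fit a large (essentially $(f(r_i)^{1/k}/r_i)^k$-many) collection of disjoint balls of radius comparable to $r_i$, and one keeps those that can in turn be filled by later generations. This produces a nested sequence of compact sets $K_1\supseteq K_2\supseteq\cdots$ whose intersection $K_\infty=\bigcap_n K_n$ lies in $B\cap\limsup_i B_i^k$ by construction (every point of $K_\infty$ lies in infinitely many of the original balls $B_i=B_i^k$). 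One then defines a probability-like measure $\mu$ supported on $K_\infty$ by distributing mass uniformly across the balls of each generation, arranging the proportions so that for a ball $D$ of radius $\rho$ one gets the estimate $\mu(D)\lesssim f(\rho)$; the mass distribution principle (if $\mu(D)\lesssim f(\diam D)$ for all small $D$ then $\cH^f(\mathrm{supp}\,\mu)\gtrsim \mu(\mathrm{supp}\,\mu)$) then delivers $\cH^f\bigl(B\cap\limsup_i B_i^k\bigr)>0$.

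The main obstacle, and where the $x^{-k}f(x)$-monotonicity is really used, is verifying the measure estimate $\mu(D)\lesssim f(\rho)$ uniformly over all radii $\rho$ and at \emph{all scales simultaneously} — not just at the scales $r_i$ appearing in the construction. One must check this separately according to whether $\rho$ is larger than the radius of the generation-$n$ ball containing $D$, comparable to it, or smaller (in which case $D$ meets only boundedly many generation-$(n{+}1)$ balls, and one pushes the estimate down a level); the bookkeeping of how many sub-balls of radius $\sim r_i$ fit inside a ball of radius $f(r_i)^{1/k}$, and how the mass assigned is renormalised against the Lebesgue mass actually captured at each stage, is delicate and is the step I expect to consume most of the work. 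A secondary technical point is ensuring at each stage that the balls $B_i^f$ used are genuinely contained in the parent balls and have radii tending to $0$ fast enough; this is handled by passing to subsequences and using $r_i\to 0$, but it must be threaded carefully through the induction.
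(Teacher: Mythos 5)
This survey does not actually prove Theorem \ref{mtp theorem}: it is quoted from Beresnevich--Velani, so your proposal can only be measured against the original proof in \cite{BV MTP}. Your outline is essentially that proof: the case split according to whether $x^{-k}f(x)$ stays bounded or tends to infinity as $x\to 0$, and, in the substantive case, a nested Cantor-type construction in which each selected dilated ball $B_i^f$ is repacked with $\asymp f(r_i)/r_i^k$ disjoint balls of radius $r_i$, followed by the mass distribution principle applied to a measure satisfying $\mu(D)\lesssim f(\rho)$ at all scales. This is precisely the content of the KGB-Lemma (\cite[Lemma 5]{BV MTP}), which the survey alludes to at the end of Section 2, and your identification of the scale-by-scale verification of $\mu(D)\lesssim f(\rho)$ as the hard part is accurate.

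There is, however, one genuine gap. In the main case $x^{-k}f(x)\to\infty$ one has $\cH^f(B)=\infty$, so ``full measure'' means \emph{infinite} measure, and the upgrade you invoke --- prove positivity in every sub-ball and conclude by a standard argument --- is not valid: a set of positive \emph{finite} $\cH^f$-measure can meet every sub-ball of $B$ in a set of positive measure (scatter countably many small positive-measure pieces densely), so positivity everywhere does not imply infinite measure. The construction as you describe it, a probability measure $\mu$ with $\mu(D)\leq Cf(\rho)$ for one fixed constant $C$, yields only $\cH^f\big(B\cap\limsup_{i}B_i^k\big)\geq 1/C>0$. The missing idea is to make the constant arbitrarily favourable: since $f(r_i)/r_i^k\to\infty$, by starting each generation sufficiently deep in the sequence one can arrange, for every $\eta>1$, a Cantor set $K_\eta\subseteq B\cap\limsup_{i}B_i^k$ carrying a probability measure $\mu_\eta$ with $\mu_\eta(D)\ll f(\rho)/\eta$, whence $\cH^f(K_\eta)\gg\eta$, and letting $\eta\to\infty$ gives infinite measure. (Equivalently, one proves the uniform content bound $\cH^f_\infty\big(D\cap\limsup_{i}B_i^k\big)\gg f(r_D)$ for \emph{all} balls $D\subseteq\Omega$ and feeds it into a covering lemma of the type of \cite[Lemma 4]{BV MTP}.) Without this quantitative step the argument stalls at positive, possibly finite, measure.
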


\begin{remark*}
Strictly speaking, the statement of the Mass Transference Principle given initially by Beresnevich and Velani, \cite[Theorem 2]{BV MTP}, corresponds to the case where $\Omega$ is taken to be $\R^k$ in Theorem \ref{mtp theorem}. The statement we have opted to give above is a consequence of \cite[Theorem 2]{BV MTP}.
\end{remark*}

The Mass Transference Principle allows us therefore to transfer a Lebesgue measure statement for a $\limsup$ set of balls to a Hausdorff measure statement for a $\limsup$ set of balls which are obtained by ``shrinking'' the original balls in a certain manner according to $f$. This is a remarkable result given that Lebesgue measure can be considered to be much `coarser' than Hausdorff measure. 

The Mass Transference Principle was used to show that the Duffin--Schaeffer conjecture for Lebesgue measure gives rise to an analogous statement for Hausdorff measures.

\begin{conjecture}[Hausdorff Measure Duffin--Schaeffer Conjecture \cite{BV MTP}]\label{conj:duffinschaeffer2}
Let $\psi: \N \to \R^+$ be any approximating function and let $f$ be a dimension function such that $r^{-k}f(r)$ is monotonic. If
\[ \sum_{q=1}^{\infty}{\phi(q)^{k}f\left(\frac{\psi(q)}{q}\right)}=\infty \quad \text{then} \quad \Haus^{f}(\cA_k''(\psi))=\Haus^{f}(\I^{k}).\]
\end{conjecture}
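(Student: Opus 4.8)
We sketch how one would establish this statement by deducing it from the Lebesgue measure Duffin--Schaeffer conjecture via the Mass Transference Principle, Theorem~\ref{mtp theorem}.

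\emph{Strategy.} For $k\geq 2$ the required Lebesgue input is the theorem of Pollington and Vaughan (Conjecture~\ref{higher dimensional Duffin-Schaeffer conjecture}), so the conclusion will be unconditional; for $k=1$ one must assume Conjecture~\ref{conj:duffinschaeffer1}. First I would enumerate as a single sequence $\{(\p_i,q_i)\}_{i\in\N}$ all pairs $(\p,q)$ with $q\in\N$, $0\leq p_j\leq q$ and $\gcd(p_j,q)=1$ for every $1\leq j\leq k$ (finitely many for each $q$), ordered so that $q_i\to\infty$, and set $B_i:=B(\p_i/q_i,\,\psi(q_i)/q_i)$, the balls taken in the supremum norm to match the definition of $\cA_k''$ (the Mass Transference Principle holds verbatim in this metric). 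After the standard reduction I may assume $\psi(q)/q\to 0$, so that the radii of the $B_i$ tend to $0$ as Theorem~\ref{mtp theorem} requires.

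\emph{Identifying the limsup sets.} Since $B^k=B$, the definition of $\cA_k''(\psi)$ gives $\limsup_i B_i^k=\limsup_i B_i=\cA_k''(\psi)$, working $\Z^k$-periodically (equivalently, on the torus $\T^k=\R^k/\Z^k$) so that the restriction $0\leq p_j\leq q$ costs nothing. For the dilated balls I would introduce the auxiliary approximating function
\[
\Psi(q):=q\,\big(f(\psi(q)/q)\big)^{1/k},
\]
which satisfies $\Psi(q)/q\to 0$ since $f(r)\to 0$ as $r\to 0$; then $B_i^f=B(\p_i/q_i,\,f(\psi(q_i)/q_i)^{1/k})=B(\p_i/q_i,\,\Psi(q_i)/q_i)$, so $\limsup_i B_i^f=\cA_k''(\Psi)$.

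\emph{Verifying the Lebesgue hypothesis and concluding.} The reason for this choice of $\Psi$ is the identity $\phi(q)^k\Psi(q)^k/q^k=\phi(q)^k f(\psi(q)/q)$, so the divergence hypothesis $\sum_q\phi(q)^k f(\psi(q)/q)=\infty$ of Conjecture~\ref{conj:duffinschaeffer2} is exactly the condition under which the Lebesgue Duffin--Schaeffer conjecture, applied to $\Psi$, gives $|\cA_k''(\Psi)|=|\I^k|$. Since $\cH^k$ is a constant multiple of Lebesgue measure, it follows that $\cH^k(B\cap\cA_k''(\Psi))=\cH^k(B)$ for every ball $B$ (a full-measure set meets every ball in a full-measure set), which is precisely the hypothesis of Theorem~\ref{mtp theorem} for $\{B_i\}$ on any ball $\Omega$; its side condition that $x^{-k}f(x)$ be monotonic is assumed in Conjecture~\ref{conj:duffinschaeffer2}. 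The theorem then yields $\cH^f(B\cap\cA_k''(\psi))=\cH^f(B)$ for every ball $B$, and a routine localisation (covering the torus $\T^k$ by finitely many balls) upgrades this to $\cH^f(\cA_k''(\psi))=\cH^f(\I^k)$, in each of the three regimes in which this common value is $0$, positive and finite, or infinite.

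\emph{Main obstacle.} The genuine difficulty is entirely outsourced: it sits in the Lebesgue input, which for $k=1$ is the Duffin--Schaeffer conjecture itself (Conjecture~\ref{conj:duffinschaeffer1}) and for $k\geq 2$ the deep theorem of Pollington and Vaughan. The transference step is the routine part, the only points needing a little care being (i) the harmless passage between supremum-norm and Euclidean balls, which does not affect full/null $\cH^f$-statements; (ii) the reduction to $\psi(q)/q\to 0$, so that the radii of the $B_i$ vanish; and (iii) the passage from a single ball $\Omega$ to $\I^k$, handled by working $\Z^k$-periodically and paying attention to the case $\cH^f(\I^k)=\infty$.
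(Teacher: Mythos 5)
Your argument is correct and is precisely the deduction the paper attributes to Beresnevich and Velani: the choice $\Psi(q)=q\,f(\psi(q)/q)^{1/k}$ converts the divergence hypothesis into the Lebesgue Duffin--Schaeffer hypothesis for $\Psi$, and Theorem~\ref{mtp theorem} then transfers the resulting full-measure statement for $\cA_k''(\Psi)=\limsup_i B_i^f$ to the $\cH^f$-statement for $\cA_k''(\psi)$. As you note, this yields the statement unconditionally for $k\geq 2$ (via Pollington--Vaughan) and conditionally on Conjecture~\ref{conj:duffinschaeffer1} for $k=1$, which matches the status reported in the paper.
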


Setting $f(r)=r^k$ in the above we see that we immediately recover Conjecture \ref{higher dimensional Duffin-Schaeffer conjecture}. What is much more surprising is that, using the Mass Transference Principle (Theorem~\ref{mtp theorem}), Beresnevich and Velani proved that Conjecture~\ref{higher dimensional Duffin-Schaeffer conjecture} implies Conjecture~\ref{conj:duffinschaeffer2} and hence that they are equivalent. In particular, Conjecture~\ref{conj:duffinschaeffer2} holds for $k>1$ for general approximating functions $\psi$ and for $k=1$ if $\psi$ is furthermore monotonic, see~\cite{BV MTP}.

Two further easy yet surprising consequences of the Mass Transference Principle, which are also mentioned in \cite{BV MTP}, are that Khintchine's Theorem implies Jarn\'{\i}k's Theorem and also that Dirichlet's Theorem implies the Jarn\'{\i}k--Besicovitch Theorem. We shall elaborate briefly on these examples here, however for further details and proofs we refer the reader to \cite{BRV aspects, BV MTP}.
 
Let us consider what Khintchine's Theorem (Theorem \ref{Khintchine's Theorem}) can tell us when our approximating function $\psi: \N \to \R^+$ is given by $\psi(q) = q^{-\tau}$ for some $\tau > 1$. In this case we will write $\cA(\tau)$ in place of $\cA_{1}(\psi)$ and we will refer to the points in $\cA(\tau)$ as \itshape $\tau$-approximable points\normalfont. For any $\tau > 1$ it can be seen that the sum of interest in Khintchine's Theorem converges and so all we can infer is that $|\cA(\tau)| = 0$ for all values of $\tau > 1$. However, in this case, we are still able to distinguish the ``sizes'' of these sets thanks to the Jarn\'{\i}k--Besicovitch Theorem. Jarn\'{i}k and Besicovitch both independently proved the following result regarding the Hausdorff dimension of the $\tau$-approximable points. 

\begin{theorem}[Jarn\'ik~\cite{Jarnik29}, Besicovitch~\cite{Besicovitch ref}] \label{JB Theorem}
Let $\tau>1$. Then 
\[
\dim_{H}(\cA(\tau))=\frac{2}{\tau+1}.
\]
\end{theorem}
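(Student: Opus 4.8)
The plan is to establish the two bounds $\dim_H(\cA(\tau))\le\tfrac{2}{\tau+1}$ and $\dim_H(\cA(\tau))\ge\tfrac{2}{\tau+1}$ separately: the upper bound by a direct covering argument, and the lower bound by feeding Dirichlet's theorem into the Mass Transference Principle (Theorem~\ref{mtp theorem}) with $k=1$.

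\textbf{Upper bound.} Unwinding the definition, $\cA(\tau)$ is the set of $x\in\I$ with $\lvert x-p/q\rvert<q^{-\tau-1}$ for infinitely many $(p,q)\in\Z\times\N$. Hence, for every $Q\in\N$,
\[
\cA(\tau)\subseteq\bigcup_{q\ge Q}\ \bigcup_{p=0}^{q} B\!\left(\tfrac pq,\,q^{-\tau-1}\right),
\]
a cover by intervals of diameter $2q^{-\tau-1}\to 0$. For any $s>\tfrac{2}{\tau+1}$ the corresponding sum $\sum_{q\ge Q}(q+1)(2q^{-\tau-1})^{s}\asymp\sum_{q\ge Q}q^{\,1-s(\tau+1)}$ converges, and its tail tends to $0$ as $Q\to\infty$; therefore $\cH^{s}(\cA(\tau))=0$ for all such $s$, giving $\dim_H(\cA(\tau))\le\tfrac{2}{\tau+1}$.

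\textbf{Lower bound.} Enumerate the family $\{B(p/q,\,q^{-\tau-1}) : q\in\N,\ 0\le p\le q\}$ as a single sequence $\{B_i=B(x_i,r_i)\}_{i\in\N}$ ordered by increasing $q$, so that $r_i\to 0$, and let $\Omega$ be any ball containing $\I$. Take the dimension function $f(r)=r^{s}$ with $s=\tfrac{2}{\tau+1}$; since $\tau>1$ we have $s<1$, so $x^{-1}f(x)=x^{s-1}$ is monotonic, as required. Because $k=1$ we have $B_i^{k}=B_i$, while
\[
B_i^{f}=B\!\left(x_i,\,f(r_i)\right)=B\!\left(\tfrac pq,\,q^{-s(\tau+1)}\right)=B\!\left(\tfrac pq,\,q^{-2}\right).
\]
By Dirichlet's theorem every irrational $x\in\I$ satisfies $\lvert x-p/q\rvert<q^{-2}$ for infinitely many $(p,q)$ with $0\le p\le q$, so $\limsup_{i}B_i^{f}\supseteq\I\setminus\Q$, a set of full Lebesgue measure; recalling that $\cH^{1}$ coincides with Lebesgue measure on $\R$, the hypothesis of Theorem~\ref{mtp theorem} holds. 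Its conclusion then gives $\cH^{s}(B\cap\limsup_i B_i^{k})=\cH^{s}(B)$ for every ball $B$ in $\Omega$. Since $\limsup_i B_i^{k}\cap\I=\cA(\tau)$ and $\cH^{s}(B)>0$ for $s<1$, we conclude $\dim_H(\cA(\tau))\ge s=\tfrac{2}{\tau+1}$, and together with the upper bound this proves the theorem.

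\textbf{Main obstacle.} There is no deep difficulty: the single idea is the choice $f(r)=r^{2/(\tau+1)}$, which is forced by the requirement that the inflated balls $B_i^{f}$ acquire precisely the radius $q^{-2}$ appearing in Dirichlet's theorem; once this is spotted the Mass Transference Principle does all the work. The points requiring a little care are the verification of the full-measure hypothesis — which is exactly (and only) Dirichlet's theorem — and the harmless bookkeeping of presenting the doubly-indexed family $\{B(p/q,q^{-\tau-1})\}$ as a sequence with radii tending to zero, together with the observation that restricting numerators to $0\le p\le q$ does not alter the $\limsup$ inside $\I$.
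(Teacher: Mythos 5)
Your argument is exactly the derivation the paper alludes to but does not write out (it defers to \cite{BRV aspects, BV MTP}): upper bound by the natural covers, lower bound by feeding Dirichlet's theorem into Theorem \ref{mtp theorem} with $f(r)=r^{2/(\tau+1)}$. Both halves are essentially correct. One slip to repair in the lower bound: with $\Omega$ taken to be a ball \emph{containing} $\I$, the hypothesis of Theorem \ref{mtp theorem} must hold for \emph{every} ball $B\subseteq\Omega$, and it fails for balls lying outside (or protruding beyond) $[0,1]$, since all your balls $B_i^f$ are centred at rationals $p/q\in[0,1]$ and have radii tending to $0$. Either take $\Omega=B(1/2,1/2)=(0,1)$, so that every $B\subseteq\Omega$ does meet $\limsup_i B_i^f$ in full measure by Dirichlet, or enumerate $B(p/q,q^{-\tau-1})$ over all $p\in\Z$ and apply the original statement with $\Omega=\R$; either way the conclusion $\cH^{s}(B\cap\limsup_iB_i)=\cH^{s}(B)=\infty$ for some ball $B\subseteq(0,1)$ still yields $\dimh(\cA(\tau))\ge s$.
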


In fact it turns out that, using the Mass Transference Principle, the Jarn\'ik--Besicovitch Theorem can be extracted from Dirichlet's theorem. Jarn\'ik later proved a much stronger statement, regarding the Hausdorff-measure of more general sets of $\psi$-approximable points, which can be viewed as the Hausdorff measure analogue of Khintchine's Theorem (Theorem \ref{Khintchine's Simultaneous Theorem}). We state below a modern version of Jarn\'{\i}k's Theorem, see \cite[Theorem 11]{BBDV ref} for a greater discussion of the derivation of this statement.

\begin{theorem}[Jarn\'ik~\cite{Jarnik31}]\label{theo:Jarnik}
Let $\psi: \N \to \R^+$ be an approximation function and let $f$ be a dimension function such that $r^{-k} f(r)$ is monotonic. Then
\[
\Haus^{f}(\cA_{k}(\psi))=
\begin{cases}
0&\text{if \;}\sum_{q=1}^{\infty}q^{k}f\left(\frac{\psi(q)}{q}\right)<\infty,\\
&\\
\Haus^f (\I^k)&\text{if \;}\sum_{q=1}^{\infty}q^{k}f\left(\frac{\psi(q)}{q}\right)=\infty\text{ and $\psi$ is monotonic.}
\end{cases}
\]
\end{theorem}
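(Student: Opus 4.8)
The strategy is to establish the two cases separately. The convergence case is a direct covering estimate and uses neither monotonicity nor the Mass Transference Principle; the divergence case is deduced from the Mass Transference Principle (Theorem~\ref{mtp theorem}), with Khintchine's simultaneous theorem (Theorem~\ref{Khintchine's Simultaneous Theorem}) supplying the Lebesgue-measure hypothesis.

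\emph{Convergence.} For every $Q\in\N$ one covers $\cA_k(\psi)$ by all balls $B(\p/q,\psi(q)/q)$ with $q\ge Q$, $\p\in\Z^k$ and $\p/q\in\I^k$; at level $q$ there are $\asymp q^k$ such balls, each of diameter $2\psi(q)/q$. Since $\sum_q q^k f(\psi(q)/q)<\infty$ forces $q^k f(\psi(q)/q)\to 0$ and hence, as $f$ is a dimension function, $\psi(q)/q\to 0$, these covers have mesh tending to $0$ as $Q\to\infty$, and, using a doubling bound $f(2r)\le 2^k f(r)$ which is automatic when $r^{-k}f(r)$ is non-increasing,
\[
\Haus^f(\cA_k(\psi))\;\ll\;\liminf_{Q\to\infty}\ \sum_{q\ge Q} q^k f\!\left(\tfrac{\psi(q)}{q}\right)=0.
\]
In the complementary case, $r^{-k}f(r)$ non-decreasing, one gets $f(r)\ll r^k$ near $0$, hence $\Haus^f(\I^k)=0$ and there is nothing to prove. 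Thus $\Haus^f(\cA_k(\psi))=0$.

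\emph{Divergence.} Apply Theorem~\ref{mtp theorem} to the family of balls $B_{(\p,q)}:=B(\p/q,\psi(q)/q)$, $q\in\N$, $\p\in\Z^k$ (working on $\R^k$ and exploiting $\Z^k$-periodicity); monotonicity of $\psi$ gives $\psi(q)/q\to0$, so the radii tend to $0$ as required. Since $x^k$ yields $B^k=B$, the $\limsup$ set in the \emph{conclusion} of Theorem~\ref{mtp theorem} is $\limsup_{(\p,q)}B_{(\p,q)}=\cA_k(\psi)$ up to periodicity and the usual equivalence of norms, while the $\limsup$ set in its \emph{hypothesis} is $\limsup_{(\p,q)}B(\p/q,f(\psi(q)/q)^{1/k})=\cA_k(\Theta)$, where
\[
\Theta(q):=q\,f\!\left(\tfrac{\psi(q)}{q}\right)^{1/k},\qquad\text{so that}\qquad \Theta(q)^k=q^k f\!\left(\tfrac{\psi(q)}{q}\right).
\]
By hypothesis $\sum_q\Theta(q)^k=\infty$. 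If $\Theta$ is monotonic, Theorem~\ref{Khintchine's Simultaneous Theorem} gives $|\cA_k(\Theta)|=1$; as $\cA_k(\Theta)$ is $\Z^k$-periodic this forces $\Haus^k(B\cap\cA_k(\Theta))=\Haus^k(B)$ for every ball $B$, which is exactly the hypothesis of the Mass Transference Principle. Theorem~\ref{mtp theorem} then yields $\Haus^f(B\cap\cA_k(\psi))=\Haus^f(B)$ for every ball $B$, and a standard covering/periodicity argument upgrades this to $\Haus^f(\cA_k(\psi))=\Haus^f(\I^k)$.

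The main obstacle is that $\Theta$ need not be monotonic: one has $\Theta(q)=\psi(q)\,\big(r^{-k}f(r)\big)^{1/k}\big|_{r=\psi(q)/q}$, a product of a non-increasing factor and a monotone factor, so Theorem~\ref{Khintchine's Simultaneous Theorem} cannot be applied to $\Theta$ verbatim. The remedy is a routine reduction: one replaces $\psi$ by the largest minorant $\tilde\psi\le\psi$ for which the associated $\widetilde\Theta$ is monotonic, so that $\cA_k(\tilde\psi)\subseteq\cA_k(\psi)$, and uses the monotonicity of \emph{both} $\psi$ and $r\mapsto r^{-k}f(r)$ to verify that $\sum_q q^k f(\tilde\psi(q)/q)=\infty$ is preserved; running the argument above with $\tilde\psi$ then gives $\Haus^f(\cA_k(\psi))\ge\Haus^f(\cA_k(\tilde\psi))=\Haus^f(\I^k)$, the reverse inequality being trivial. (When instead $r^{-k}f(r)$ is non-decreasing, $\Haus^f$ is either a null multiple of, or comparable to, Lebesgue measure, and the statement follows at once from Theorem~\ref{Khintchine's Simultaneous Theorem} applied to $\psi$ itself.) Verifying that this reduction preserves divergence of the critical sum, together with the final covering and periodicity bookkeeping, is where essentially all of the remaining work lies.
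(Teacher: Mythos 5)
Your overall route --- the convergence half by a covering argument, the divergence half by feeding Khintchine's simultaneous theorem into the Mass Transference Principle --- is exactly the derivation this paper alludes to, and you correctly isolate the crux: $\Theta(q)=q\,f(\psi(q)/q)^{1/k}$ need not be monotonic, so Theorem~\ref{Khintchine's Simultaneous Theorem} cannot be applied to it directly. The problem is that your proposed patch is not a routine reduction: in general there is \emph{no} minorant $\tilde\psi\le\psi$ whose associated $\tilde\Theta$ is monotonic and whose critical sum still diverges, even though $\psi$ and $r^{-k}f(r)$ are both monotonic. For instance (take $k=1$), let $Q_j=2^{2^j}$ and arrange a non-increasing $\psi$ (constant on each block $[Q_j,Q_{j+1})$, dropping steeply at the endpoints) together with a dimension function $f$ having $r^{-1}f(r)$ non-increasing, so that $f(\psi(q)/q)=Q_{j+1}^{-2}$ for $Q_j\le q<Q_{j+1}$; such a pair exists because the only compatibility needed is that between consecutive sample points $\psi(q)/q$ the values of $f$ drop by no larger a factor than the points themselves, which is ensured by letting $\psi$ drop by the factor $Q_{j+2}^{-2}/Q_{j+1}^{-2}$ at each $Q_{j+1}$ and interpolating $f$. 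Then $\sum_q qf(\psi(q)/q)=\infty$ (each block contributes a constant), yet $\Theta(Q_j)=Q_jQ_{j+1}^{-2}=Q_j^{-3}$, so any non-increasing $\tilde\Theta\le\Theta$ contributes at most $Q_{j+1}Q_j^{-3}=Q_j^{-1}=2^{-2^j}$ on the $j$-th block and its sum converges, while any non-decreasing minorant is identically zero. Since every $\tilde\psi\le\psi$ gives $\tilde\Theta\le\Theta$ pointwise, no admissible $\tilde\psi$ can supply the full-measure hypothesis of Theorem~\ref{mtp theorem}, and your divergence case is incomplete exactly at the point the paper warns about when it remarks that the stronger monotonicity hypotheses in the original versions of Khintchine's and Jarn\'{\i}k's theorems are relevant to this deduction.

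What actually closes the gap is different from your reduction. For $k\ge2$ one applies a Lebesgue statement requiring no monotonicity directly to $\Theta$ --- Gallagher's theorem \cite{Gallagher ref}, equivalently Theorem~\ref{KG Theorem} with $n=1$, $m=k$ --- after which your MTP step goes through verbatim. For $k=1$ the argument along these lines breaks down (monotonicity cannot be dropped in dimension one), and the modern statement needs other machinery, e.g.\ the ubiquity framework of \cite{BDV limsup sets} or Jarn\'{\i}k's original argument; what the MTP deduction in \cite{BRV aspects, BV MTP} recovers is the original form of Jarn\'{\i}k's theorem from the original form of Khintchine's theorem, whose stronger hypotheses tame $\Theta$. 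Separately, a small slip in your convergence case: if $r^{-k}f(r)$ is non-decreasing it does not follow that $\Haus^f(\I^k)=0$ (take $f(r)=r^k$); rather either $r^{-k}f(r)\to0$, in which case $\Haus^f$ vanishes on all bounded sets and there is nothing to prove, or $f(r)\asymp r^k$ near $0$, in which case the doubling bound you need holds anyway. That part is easily repaired; the divergence case is the genuine gap.
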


Setting $\psi(q)=q^{-\tau}$ in Jarn\'{\i}k's Theorem we recover the Jarn\'ik--Besicovitch Theorem and additionally gain knowledge of the Hausdorff measure at the critical value $s_{0}=2/(\tau+1)$, \ie $\Haus^{s_{0}}(\cA(\tau))=\infty$.
Although it may at first be surprising, (the original statement of) Jarn\'{\i}k's Theorem follows directly from (the original statement of) Khintchine's Theorem using the Mass Transference Principle. For a proof see, for example, \cite{ BRV aspects, BV MTP}. We remark here that in the original versions of Theorems \ref{Khintchine's Theorem}, \ref{Khintchine's Simultaneous Theorem} and \ref{theo:Jarnik} various stronger monotonicity conditions were required and note that this is of some relevance when using the Mass Transference Principle to deduce Jarn\'{\i}k's Theorem from Khintchine's Theorem. It is possible to deduce Theorem \ref{Khintchine's Simultaneous Theorem} from Theorem \ref{theo:Jarnik} via the Mass Transference Principle but an additional constraint is required on the monotonicity of $\psi$ in this case.

Apart from these important applications in number theory, the Mass Transference Principle can be used to determine Hausdorff dimension and Hausdorff measure statements for many other constructions. 

We end this section by stating the most general variant of the Mass Transference Principle in the original article of Beresnevich and Velani~\cite{BV MTP} and mentioning one of its applications.
Let $(X,d)$ be a locally compact metric space. Let $g$ be a doubling dimension function and suppose that $X$ is $g$-Ahlfors regular, \ie there exist $0 < c_1 \leq1\leq c_2 < \infty$ and $r_0 > 0$ such that
\[ c_1 g(r) \leq \cH^g(B(x,r)) \leq c_2 g(r)\]
for any ball $B = B(x,r)$ with centre $x \in X$ and radius $r \leq r_0$. In this case, given a ball $B=B(x,r)$  and any dimension function $f$ we define $B^{f,g} = B(x, g^{-1}f(r))$. Note that $B^{g,g} = B$.

\begin{theorem}[Beresnevich -- Velani \cite{BV MTP}] \label{general mtp theorem}
Let $(X,d)$ be a locally compact metric space and let $g$ be a doubling dimension function. Let $\{B_i=B(x_{i},r_{i})\}_{i \in \N}$ be a sequence of balls in $X$ with $r_i \to 0$ as $i \to \infty$ and let $f$ be a dimension function such that ${f(x)}/{g(x)}$ is monotonic. Suppose that, for any ball $B$ in $X$,
\[\cH^g(B \cap \limsup_{i \to \infty}{B^{f,g}_i}) = \cH^g(B).\]
Then, for any ball $B$ in $X$, we have
\[\cH^f(B \cap \limsup_{i \to \infty}{B^{g,g}_i}) = \cH^f(B).\]
\end{theorem}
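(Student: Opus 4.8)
The plan is to reduce the general Ahlfors-regular statement (Theorem~\ref{general mtp theorem}) to the Euclidean Mass Transference Principle (Theorem~\ref{mtp theorem}) — or more precisely, to run the \emph{same} proof strategy as in \cite{BV MTP} but with the role of $k$-dimensional Lebesgue measure played by the $g$-Ahlfors regular measure $\cH^g$ on $X$. The key structural input is that $g$-Ahlfors regularity makes $\cH^g$ behave, up to the multiplicative constants $c_1, c_2$, exactly like the Lebesgue measure did in the original argument: balls of radius $r$ have $\cH^g$-measure comparable to $g(r)$, and $X$ is a doubling metric measure space, which is all one needs to make the Vitali-type covering lemmas and density arguments go through.

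The key steps, in order, would be as follows. First, reduce to showing that $\cH^f(B \cap \limsup_i B_i^{g,g})$ is positive for every ball $B$ in $X$: once positivity is known for every ball, a standard density argument (using that $\cH^f$ restricted to a $\limsup$ set and the $g$-Ahlfors regularity of the ambient space) upgrades ``positive measure'' to ``full measure'' $\cH^f(B)$. Second, fix a ball $B$ and, assuming toward a contradiction (or simply constructively), build inside $B$ a Cantor-like subset $\mathcal{K}_\infty \subseteq B \cap \limsup_i B_i^{g,g}$ together with a probability measure $\mu$ supported on it. At each stage $n$ of the construction one selects finitely many of the balls $B_i^{f,g}$ (with $i$ large, so $r_i$ small) that are disjoint, lie inside the previously-chosen balls, and — crucially — whose $\cH^g$-measure inside the parent ball is a definite proportion of the parent's measure; this selection is possible because the hypothesis $\cH^g(B' \cap \limsup_i B_i^{f,g}) = \cH^g(B')$ holds for \emph{every} sub-ball $B'$, so the $\limsup$ set of the $B_i^{f,g}$ is locally ubiquitous in the $\cH^g$ sense. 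Inside each selected ball $B_i^{f,g} = B(x_i, g^{-1}f(r_i))$ one then further distributes the mass over a large number of disjoint balls of radius comparable to $r_i$ — these are the ``survivors'' that will form the next level and that sit inside $B_i^{g,g} = B_i$ — and one assigns $\mu$-mass to them so that the natural $f$-dimensional scaling is respected. Third, verify the mass distribution principle estimate: for an arbitrary small ball $A = B(x,\rho)$, bound $\mu(A)$ by a constant times $f(\rho)$ (or by $\cH^f_\infty(A)$), splitting into the cases where $\rho$ is comparable to a level-$n$ radius and where it is intermediate. This is where the doubling of $g$ and $f/g$, the Ahlfors regularity, and the relation $g^{-1}f$ all get used simultaneously to control the number of level-$(n+1)$ balls meeting $A$ and the mass each carries. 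Concluding via the mass distribution principle gives $\cH^f(B \cap \limsup_i B_i^{g,g}) \geq \mu(\mathcal{K}_\infty) / C > 0$.

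The main obstacle, and the place where the general metric setting genuinely costs something beyond the Euclidean case, is the bookkeeping in the inductive construction and the final $\mu(A) \lesssim f(\rho)$ estimate: in $\R^k$ one exploits exact volume scaling ($|B(x,r)| = c_k r^k$) and the ability to tile a ball efficiently by smaller balls, whereas in a general $g$-Ahlfors regular space one only has two-sided comparability and must pass through Besicovitch/Vitali covering arguments valid in doubling metric spaces, tracking the constants $c_1, c_2$, the doubling constant of $g$, and the monotonicity constant of $f/g$ so that none of them blows up through infinitely many levels. A secondary subtlety is that $g^{-1}$ need not be well-behaved unless one knows $g$ is, say, strictly increasing; one handles this by replacing $g^{-1}$ with a suitable right-inverse and absorbing the resulting comparability into the doubling constants, exactly as is implicitly done when the statement writes $B^{f,g} = B(x, g^{-1}f(r))$. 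Modulo this metric-space overhead, the proof is a faithful transcription of the original Beresnevich--Velani argument for Theorem~\ref{mtp theorem}, with $\cH^g$ in place of Lebesgue measure, and indeed Theorem~\ref{mtp theorem} is recovered by taking $X = \R^k$ and $g(x) = x^k$.
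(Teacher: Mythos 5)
This is a survey article: Theorem~\ref{general mtp theorem} is stated and attributed to Beresnevich--Velani \cite{BV MTP} but is not proved anywhere in the paper, so there is no in-paper proof to compare against. Your outline is a fair description of the strategy of the original argument in \cite{BV MTP} (a Cantor-type construction of a subset of $B\cap\limsup_i B_i^{g,g}$ carrying a measure $\mu$ to which the mass distribution principle is applied, with $\cH^g$ playing the role that Lebesgue measure plays in Theorem~\ref{mtp theorem}). However, as a proof it has two problems. The substantive one is that everything difficult is deferred: the inductive selection of sub-balls (which in \cite{BV MTP} rests on a covering lemma of Vitali/KGB type exploiting local ubiquity of $\limsup_i B_i^{f,g}$ in every sub-ball), and the verification that $\mu(A)\ll f(\rho)$ for all small balls $A=B(x,\rho)$, are precisely where the hypotheses ($g$ doubling, Ahlfors regularity, monotonicity of $f/g$) must be combined, and you describe rather than execute them. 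That is acceptable as a plan but it is not yet a proof.

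The concrete gap is your first reduction. You claim that once $\cH^f\bigl(B'\cap\limsup_i B_i^{g,g}\bigr)>0$ is known for every ball $B'$, a density argument upgrades this to $\cH^f(B)$. This fails in the main case of the theorem, namely when $f(r)/g(r)\to\infty$ as $r\to 0$, for then $\cH^f(B)=\infty$ for every ball $B$ with $\cH^g(B)>0$, and positivity on every ball does not imply infinite measure: one can have a $\limsup$ set $E$ with $0<\cH^f(E\cap B')<\infty$ for every ball $B'$ while $\cH^f(B')=\infty$ (e.g.\ a countable union of Cantor-type sets of summable finite $\cH^f$-measure meeting every ball). The correct reduction, and the one used in \cite{BV MTP}, splits into two cases. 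If $f/g$ does not tend to infinity at $0$ then $\cH^f$ and $\cH^g$ are comparable and the conclusion follows directly from the hypothesis. If $f/g\to\infty$, one must show $\cH^f\bigl(B\cap\limsup_i B_i^{g,g}\bigr)\geq\eta$ for \emph{arbitrary} $\eta>0$; this is achieved by building, for each $\eta$, a Cantor set $K_\eta$ with a probability measure $\mu_\eta$ satisfying $\mu_\eta(A)\ll f(\rho)/\eta$, so that the mass distribution principle yields a lower bound $\gg\eta$. You should restructure your step~1 accordingly; the rest of your plan is then the right shape, but needs to be carried out.
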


As an example, Theorem \ref{general mtp theorem} is applicable when $X$ is, say, the standard middle-third Cantor set which we denote by $K$ (\ie $K$ is the set of $x \in [0,1]$ which contain only 0s and 2s in their ternary expansion). In fact, in this case, Levesley, Salp, and Velani \cite{LSV ref} have used Theorem \ref{general mtp theorem} as a tool for proving an assertion of Mahler on the existence of very well approximable numbers in the middle-third Cantor set. It is well known that 
\[|K| = 0 \quad \text{and} \quad \dimh K = \frac{\log{2}}{\log{3}}.\]

As a result of Dirichlet's Theorem, we know that for any $x \in \R$ there exist infinitely many pairs $(p,q) \in \Z \times \N$ for which
\[\left|x - \frac{p}{q} \right| < \frac{1}{q^2}.\]

If the exponent in the denominator of the right-hand side of the above can be improved (\ie increased) for some $x \in \R$ then $x$ is said to be \itshape very well approximable\normalfont; that is, a real number $x$ is said to be very well approximable if there exists some $\varepsilon > 0$ such that 
\begin{align}
\left|x - \frac{p}{q}\right| &< \frac{1}{q^{2+\varepsilon}} \label{vwa inequality}
\end{align}
for infinitely many pairs $(p,q) \in \Z \times \N$. We will denote the set of very well approximable numbers by $\cW$. If, further, (\ref{vwa inequality}) is satisfied for every $\varepsilon > 0$ for some $x \in \R$ then $x$ is called a \itshape Liouville number\normalfont, we will denote by $\cL$ the set of all Liouville numbers. 

It is known that
\begin{align*}
&|\cW|=0\text{, } \dimh(\cW)=1, \\
&|\cL|=0, \text{ and } \dimh(\cL)=0.
\end{align*}  

Regarding the intersection of $\cW$ with the middle-third Cantor set, Mahler is attributed with having made the following claim. 

\begin{assertion*}
There exist very well approximable numbers, other than Liouville numbers, in the middle-third Cantor set; \ie
\[(\cW \setminus \cL) \cap K \neq \emptyset.\]
\end{assertion*}

\begin{remark*}
We refer the reader to \cite{LSV ref} for discussion of the precise origin of this claim and also for some discussion regarding why it is natural/necessary to exclude Liouville numbers from Mahler's assertion.
\end{remark*}

Now, let $\cB = \{3^n: n = 0,1,2,\dots\}$ and, given an approximating function $\psi: \R^+ \to \R^+$, consider the set 
\[\cA_{\cB}(\psi):= \left\{x \in [0,1]: \left|x-\frac{p}{q}\right| < \psi(q) \text{ for infinitely many } (p,q) \in \Z \times \cB\right\}.\] 

Levesley, Salp and Velani have used the general Mass Transference Principle (Theorem \ref{general mtp theorem}) as a tool for establishing the following statement regarding Hausdorff measures of the set $\cA_{\cB}(\psi) \cap K$ in \cite{LSV ref}.

\begin{theorem} \label{LSV theorem}
Let $f$ be a dimension function such that $r^{-\frac{\log{2}}{\log{3}}}f(r)$ is monotonic. Then,
\[
\Haus^{f}(\cA_{\cB}(\psi) \cap K)=
\begin{cases}
0&\text{if \;}\sum_{q=1}^{\infty}{f(\psi(3^n)) \times (3^n)^{\frac{\log{2}}{\log{3}}}}<\infty,\\
&\\
\Haus^f (K)&\text{if \;}\sum_{q=1}^{\infty}{f(\psi(3^n)) \times (3^n)^{\frac{\log{2}}{\log{3}}}}=\infty.
\end{cases}
\]
\end{theorem}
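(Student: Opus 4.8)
The natural strategy is to obtain this statement as a consequence of the general Mass Transference Principle (Theorem \ref{general mtp theorem}) applied with the ambient space $X = K$, the middle-third Cantor set, which is $g$-Ahlfors regular for $g(r) = r^{s}$ with $s = \log 2/\log 3$. First I would handle the convergence case, which does not need the Mass Transference Principle at all: it follows from the Hausdorff--Cantelli lemma (the natural covering argument). One covers $\cA_\cB(\psi) \cap K$ by, for each $n$, the intersections with $K$ of the balls $B(p/3^n, \psi(3^n))$ for $0 \le p \le 3^n$; since $K$ is $s$-Ahlfors regular, $\cH^f_\infty(B(p/3^n,\psi(3^n)) \cap K) \ll f(\psi(3^n))$ whenever $\psi(3^n)$ is small, and summing over $p$ and over $n \ge N$ gives a tail of the series $\sum_n f(\psi(3^n)) (3^n)^{s}$, which tends to $0$. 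Hence $\cH^f(\cA_\cB(\psi) \cap K) = 0$. (One should note that if $\psi(3^n) \not\to 0$ the statement is either trivial or the divergence case applies, so this is not a real restriction.)

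For the divergence case, the plan is to first establish the \emph{Lebesgue-type} (here, $\cH^s$-measure) full-measure statement
\[
\cH^s\Big(B \cap \limsup_{n\to\infty} \big(B(p/3^n, g^{-1}f(\psi(3^n))) \cap K\big)\Big) = \cH^s(B)
\]
for every ball $B$ centred on $K$, and then feed this into Theorem \ref{general mtp theorem} (with the sequence of balls being an enumeration of the $B(p/3^n, \psi(3^n))$, $0 \le p \le 3^n$, $n \ge 1$) to conclude $\cH^f(\cA_\cB(\psi) \cap K) = \cH^f(K)$, using that $B^{g,g} = B$. To prove the $\cH^s$-statement, I would use the structure of $K$: a ball of radius comparable to $3^{-n}$ centred on $K$ meets $K$ in (roughly) one $n$-th level Cantor cylinder, which contains a point $p/3^n$ with $p$ in the appropriate residue class, and more importantly it contains, at every deeper level $m > n$, many $m$-th level cylinders each sitting inside some $B(p'/3^m, c\,3^{-m}) \cap K$. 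The key input is that when $\sum_n f(\psi(3^n)) (3^n)^s$ diverges, the radii $r_n := g^{-1} f(\psi(3^n)) = f(\psi(3^n))^{1/s}$ satisfy $\sum_n r_n^s \cdot (3^n)^s = \sum_n (3^n r_n)^s = \infty$, i.e. the enlarged balls $B(p/3^n, r_n)$ cover $K$ ``with infinite total $\cH^s$-weight'' at each scale; a quasi-independence / local ubiquity argument (as in Beresnevich--Velani, or a direct mass-distribution construction of a Cantor subset inside the $\limsup$ set) then upgrades this to full $\cH^s$-measure in every ball.

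The main obstacle is precisely this divergence half: establishing the full-$\cH^s$-measure statement for the $\limsup$ set of the enlarged balls inside $K$, i.e. verifying the hypothesis of Theorem \ref{general mtp theorem}. This is essentially a \emph{zero-one law} / ubiquity statement for $\psi$-approximation by denominators in $\cB = \{3^n\}$ restricted to $K$, and it requires quantitative control on how the balls $B(p/3^n, r_n)$ with $p/3^n \in K$ distribute among the $n$-th level cylinders of $K$ — one needs that a positive proportion (uniformly in $n$) of $n$-th level cylinders contain such a rational, which here is automatic since the endpoints $p/3^n$ of \emph{every} cylinder lie in $K$. Given that, the overlap estimates needed for the (second) Borel--Cantelli / divergence argument are standard but must be carried out with the $\cH^s$ measure on $K$ in place of Lebesgue measure; alternatively one can bypass them by explicitly building a nested sequence of cylinders defining a Cantor subset of $\cA_\cB(\psi) \cap K$ supporting a measure with the right scaling, which I would regard as the cleanest route. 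Once the $\cH^s$-statement is in hand, the transference to $\cH^f$ is immediate from Theorem \ref{general mtp theorem} together with the elementary fact $\cH^f(K) \le \cH^f(B)$-type comparisons for balls $B$ covering $K$, and $\cH^f(\I) $ finiteness is not needed since we work inside the bounded set $K$.
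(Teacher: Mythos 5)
The paper does not actually prove Theorem \ref{LSV theorem}; it only states it and attributes the proof to \cite{LSV ref}, noting that the general Mass Transference Principle (Theorem \ref{general mtp theorem}) with $X=K$ and $g(r)=r^{\log 2/\log 3}$ is the key tool. Your plan follows exactly that route --- Hausdorff--Cantelli for convergence (where you should note that only $O(2^n)$ of the balls $B(p/3^n,\psi(3^n))$ meet $K$, which is what makes the sum come out as $\sum_n 2^n f(\psi(3^n))$ rather than $\sum_n 3^n f(\psi(3^n))$, and that the balls fed into Theorem \ref{general mtp theorem} must be recentred at points of $K$, e.g.\ at cylinder endpoints), and the divergence case via the full-$\cH^{\log 2/\log 3}$-measure statement for the blown-up balls, which is indeed the substantive ubiquity-type input established in \cite{LSV ref} and which your proposal correctly identifies but does not carry out.
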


As a consequence of Theorem \ref{LSV theorem} the following corollary may be deduced, for details of how we refer the reader to \cite{LSV ref}.

\begin{corollary}[Levesley -- Salp -- Velani \cite{LSV ref}]
We have 
\[\dimh((\cW \setminus \cL) \cap K) \geq \frac{\log{2}}{2\log{3}}.\]
\end{corollary}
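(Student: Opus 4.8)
The plan is to apply Theorem~\ref{LSV theorem} to a well-chosen approximating function of the form $q^{-\tau}$ and then to discard a set of Hausdorff dimension zero. Fix $\tau > 2$ and take $\psi_\tau(q) = q^{-\tau}$ together with the dimension function $f(r) = r^{s}$ where $s = \frac{\log 2}{\tau \log 3}$. Since $\tau > 2 > 1$ we have $s < \frac{\log 2}{\log 3} = \dimh K$, so $r^{-\log 2/\log 3} f(r) = r^{\,s - \log 2/\log 3}$ is monotonic and Theorem~\ref{LSV theorem} applies. By the choice of $s$ the relevant series is
\[
\sum_{n=0}^{\infty} f\big(\psi_\tau(3^n)\big)\,(3^n)^{\frac{\log 2}{\log 3}} = \sum_{n=0}^{\infty} 3^{\,n\left(\frac{\log 2}{\log 3} - \tau s\right)} = \sum_{n=0}^{\infty} 1 = \infty ,
\]
so Theorem~\ref{LSV theorem} yields $\Haus^{s}\big(\cA_{\cB}(\psi_\tau) \cap K\big) = \Haus^{s}(K) = \infty$, the last equality because $s < \dimh K$. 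In particular $\dimh\big(\cA_{\cB}(\psi_\tau) \cap K\big) \geq s = \frac{\log 2}{\tau \log 3}$.

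Next I would show that, apart from a countable set, $\cA_{\cB}(\psi_\tau)$ consists of very well approximable numbers. The exponent $\tau$ must be taken strictly larger than $2$, since with $\tau = 2$ the inequality $|x - p/3^n| < 3^{-2n}$ only encodes Dirichlet-quality approximation. So let $x \in \cA_{\cB}(\psi_\tau)$ be irrational, say $|x - p_n/3^n| < 3^{-\tau n}$ for infinitely many $n$, and write $p_n/3^n = a_n/b_n$ in lowest terms, so that $b_n \mid 3^n$. Irrationality of $x$ forces $b_n \to \infty$ along this sequence: otherwise the $a_n/b_n$ would lie in a fixed finite set of rationals, all at a positive distance from $x$, contradicting $3^{-\tau n} \to 0$. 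Since $b_n \le 3^n$ we get $|x - a_n/b_n| < 3^{-\tau n} \le b_n^{-\tau} = b_n^{-(2+\varepsilon)}$ with $\varepsilon = \tau - 2 > 0$, and the $a_n/b_n$ are infinitely many distinct rationals; hence $x \in \cW$. Thus $\cA_{\cB}(\psi_\tau) \subseteq \cW \cup \Q$.

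Finally I would remove the negligible sets. As $\dimh \cL = 0$ and $\Q$ is countable, the set $\cL \cup \Q$ has Hausdorff dimension $0$, so excising it from $\cA_{\cB}(\psi_\tau) \cap K$ does not decrease the Hausdorff dimension. Combining this with the inclusion
\[
(\cW \setminus \cL) \cap K \supseteq \big(\cA_{\cB}(\psi_\tau) \cap K\big) \setminus (\cL \cup \Q)
\]
gives $\dimh\big((\cW \setminus \cL) \cap K\big) \geq \frac{\log 2}{\tau \log 3}$, and letting $\tau \to 2^{+}$ (i.e.\ taking the supremum over $\tau > 2$) finishes the proof. The quantitative content is entirely carried by Theorem~\ref{LSV theorem}; the step I expect to require the most care is the second paragraph — confirming that approximation by denominators restricted to powers of $3$ genuinely produces very well approximable points, and that removing the Liouville numbers (and rationals) costs nothing in dimension — even though both assertions ultimately rest on soft facts about growth of reduced denominators and about insensitivity of Hausdorff dimension to dimension-zero sets.
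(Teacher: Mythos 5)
Your argument is correct and is exactly the deduction the paper has in mind (it defers the details to the Levesley--Salp--Velani reference): apply Theorem~\ref{LSV theorem} with $\psi(q)=q^{-\tau}$, $\tau>2$, and $f(r)=r^{\log 2/(\tau\log 3)}$, observe that the irrational points so approximated are very well approximable, discard the dimension-zero set $\cL\cup\Q$, and let $\tau\to 2^{+}$. All the individual steps check out, including the divergence computation and the monotonicity hypothesis on $r^{-\log 2/\log 3}f(r)$.
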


The truth of Mahler's assertion follows immediately from this corollary.

\medskip

Finally, we conclude this section by noting that both the original Mass Transference Principle (Theorem \ref{mtp theorem}) and its generalisation given by Theorem \ref{general mtp theorem} concern lim sup sets arising from sequences of balls. In subsequent sections we will explore what happens when this condition is relaxed. More precisely, we will consider linear forms (Section \ref{linear forms section}) and rectangles (Section \ref{rectangles section}) in the deterministic setting and arbitrary Lebesgue measurable sets in the random setting (Section \ref{sect:randomMTP}).

Inevitably, there are various aspects of the Mass Transference Principle that are not covered in this survey.  For example, we have not touched upon the fundamental connections between the Mass Transference Principle set up and the ubiquitous systems framework as developed in \cite{BDV limsup sets} --- in short, the so-called KGB-Lemma \cite[Lemma 5]{BV MTP} is very much at the heart of both. Although a ubiquitous framework was developed in \cite{BDV limsup sets}, we remark that the idea of a ubiquitous system was introduced earlier in \cite{Dodson-Rynne-Vickers ref} and was further developed in \cite{Bernik-Dodson ref}. For an overview of ubiquity and some of its applications also see \cite{BDV limsup sets, Durand ref} and references within. Another omission from this survey is any mention of mass transference principles in the multifractal setting --- see, for example, \cite{FST ref}. In the interest of brevity we have ultimately opted against the inclusion of such topics and chosen here to only focus on the aspects mentioned above.

\section{Extension to systems of linear forms} \label{linear forms section}

In this section we will consider the extension of the Mass Transference Principle to systems of linear forms and mention some of the associated consequences in the theory of Diophantine approximation. 

\subsection{A mass transference principle for systems of linear forms}

Let $k, m \geq 1$ and $l \geq 0$ be integers such that $k = m + l$. Let $\cR = (R_n)_{n \in \N}$ be a family of planes in $\R^k$ of common dimension $l$. For every $n \in \N$ and $\delta \geq 0$, define
\[ \Delta(R_n,\delta) := \{\mathbf{x} \in \R^k : \dist(\mathbf{x},R_n)<\delta\},\]
where $\dist(\vv x,R_n)=\inf\{\|\vv x-\vv y\|:\vv y\in R_n\}$ and $\|\cdot\|$ is a norm on $\R^k$.

Let $\U : \N \to \R$ be a non-negative real-valued function $ n \mapsto \U_n$ on $\N$ such that $\U_n \to 0$ as $n \to \infty$.
Consider the $\limsup$ set
\[ \La(\U) := \{\mathbf{x} \in \R^k : \mathbf{x} \in \Delta(R_n, \U_n) \text{ for infinitely many } n \in \N\}.\]

In 2006, Beresnevich and Velani also established the following extension of the Mass Transference Principle to systems of linear forms \cite{BV Slicing}.

\begin{theorem}[Beresnevich -- Velani \cite{BV Slicing}] \label{BV Slicing Theorem}
Let $\cR$ and $\U$ be as given above. Let $V$ be a linear subspace of $\R^k$ such that $\dim V = m  = \text{\textnormal{codim }} \cR$,
\begin{enumerate}[(i)]
\item{$V \cap R_n \neq \emptyset$ for all $n \in \N$, and}
\item{$\sup_{n \in \N}{\diam(V \cap \Delta(R_n,1))} < \infty$.}
\end{enumerate}
 Let $f$ and $g : r \to g(r) := r^{-l}f(r)$ be dimension functions such that $r^{-k}f(r)$ is monotonic and let $\Om$ be a ball in $\R^k$. Suppose for any ball $B$ in $\Om$ that
\[\cH^k\left(B \cap \La\left(g(\U)^{\frac{1}{m}}\right)\right) = \cH^k(B).\]
Then, for any ball $B$ in $\Om$,
\[\cH^f(B \cap \La(\U)) = \cH^f(B).\]
\end{theorem}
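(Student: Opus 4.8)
The plan is to reduce Theorem~\ref{BV Slicing Theorem} to the original ``balls to balls'' Mass Transference Principle (Theorem~\ref{mtp theorem}) by slicing $\R^k = \R^m \times \R^l$ transversally to the family of planes $\cR$. The geometric input is that each $\Delta(R_n,\U_n)$ is a neighbourhood of an $l$-dimensional plane, and the hypotheses on $V$ guarantee that slicing by (affine copies of) $V$ cuts $\La(\U)$ into $m$-dimensional $\limsup$ sets built from genuine balls of radius comparable to $\U_n$. Concretely, after an affine change of coordinates we may assume $V = \R^m \times \{\0\}$ and that the planes $R_n$ are (uniformly Lipschitz) graphs over the orthogonal complement $V^\perp \cong \R^l$; condition (i) ensures each $R_n$ meets $V$, and condition (ii) bounds the ``tilt'' of the planes so that the intersection $V_{\mathbf w} \cap \Delta(R_n,\delta)$ with the affine slice $V_{\mathbf w} := \R^m \times \{\mathbf w\}$ is comparable to a ball in $\R^m$ of radius $\asymp \delta$ centred at the point where $R_n$ crosses $V_{\mathbf w}$.

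The key steps, in order, are as follows. First, set up the fibring $\R^k \to \R^l$, $\mathbf x \mapsto \mathbf w$, and record the slicing identity $\La(\U) \cap V_{\mathbf w} = \limsup_n \big( \Delta(R_n,\U_n) \cap V_{\mathbf w}\big)$, together with the comparison that $\Delta(R_n,\U_n) \cap V_{\mathbf w}$ contains and is contained in balls in $V_{\mathbf w} \cong \R^m$ of radii $c\,\U_n$ and $C\,\U_n$ for constants depending only on the tilt bound (ii). Second, invoke a Fubini-type / slicing lemma for Hausdorff measures: if a set $E \subseteq \R^k$ satisfies, for a positive-measure set of slices $\mathbf w$, that $\cH^h(E \cap V_{\mathbf w}) = \cH^h(B \cap V_{\mathbf w})$ for the dimension function $h(r) = r^{-l}f(r) = g(r)$ on the $m$-dimensional slice, then $\cH^f(E \cap B) = \cH^f(B)$ on $\R^k$; and conversely a full-$\cH^k$ statement on $\R^k$ forces a full-$\cH^m$ (i.e.\ full Lebesgue) statement on almost every slice. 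This is exactly the mechanism that converts between $f$ on $\R^k$ and $g = r^{-l}f$ on the $m$-dimensional transversal, and it is where the relation $k = m+l$ and the hypothesis that $g$ is a dimension function get used. Third, on a fixed slice $V_{\mathbf w}$ apply Theorem~\ref{mtp theorem} in $\R^m$: the Lebesgue hypothesis $\cH^k(B \cap \La(g(\U)^{1/m})) = \cH^k(B)$ descends (by the converse slicing direction) to $\cH^m\big(B_{\mathbf w} \cap \limsup_n B(x_{n,\mathbf w}, g(\U_n)^{1/m})\big) = \cH^m(B_{\mathbf w})$ for a.e.\ $\mathbf w$, which is precisely the ``$B^g$'' Lebesgue hypothesis of the one-dimensional-in-codimension MTP with dimension function $g$ on $\R^m$ (note $g(r)^{1/m}$ is the correct shrinking since the ambient dimension of the slice is $m$); Theorem~\ref{mtp theorem} then yields $\cH^g\big(B_{\mathbf w} \cap \limsup_n B(x_{n,\mathbf w},\U_n)\big) = \cH^g(B_{\mathbf w})$ on that slice. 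Fourth, reassemble: by the comparison in step one this $\cH^g$-full statement holds for $\La(\U) \cap V_{\mathbf w}$ for a.e.\ $\mathbf w$, and then the forward slicing direction in step two lifts it to $\cH^f(B \cap \La(\U)) = \cH^f(B)$, completing the proof.

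The main obstacle is step two — the slicing/Fubini principle for Hausdorff measures with the exact bookkeeping $f \leftrightarrow g = r^{-l}f$ across a codimension-$l$ foliation. One direction (a full-measure statement on a.e.\ slice implies a full-measure statement upstairs) is a standard Fubini-type lower bound for Hausdorff measure using the product inequality $\cH^f(E) \gtrsim \int \cH^g(E \cap V_{\mathbf w})\, d\mathbf w$ valid when $f(r) \asymp r^l g(r)$; the monotonicity of $r^{-k}f(r)$ is what makes $g$ a legitimate dimension function and keeps these comparisons uniform in scale. The reverse direction — extracting a.e.-slice Lebesgue fullness from $\cH^k$-fullness upstairs — is genuine Fubini for Lebesgue measure and is harmless, but one must be careful that it is applied to the set $\La(g(\U)^{1/m})$, whose slices are again $\limsup$ sets of balls, so that the hypotheses of Theorem~\ref{mtp theorem} are literally met slice by slice. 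A secondary technical point, needed to make ``a.e.\ slice'' arguments go through cleanly, is to verify that the exceptional null set of bad slices can be discarded without affecting $\cH^f$ of the total set; this uses that $\cH^f$ of a set fibred over a Lebesgue-null base of slices is itself controlled, again via $f(r) \asymp r^l g(r)$ and the monotonicity hypothesis. Modulo these slicing lemmas, the argument is a clean transversal reduction to the already-established Theorem~\ref{mtp theorem}.
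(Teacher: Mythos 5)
Your proposal is correct and follows essentially the same route as the Beresnevich--Velani argument that the paper cites and sketches: Fubini descends the Lebesgue hypothesis to almost every affine slice $V+\vecb$, conditions (i) and (ii) make the sliced neighbourhoods $\Delta(R_n,\U_n)\cap(V+\vecb)$ comparable to $m$-dimensional balls of radius $\asymp\U_n$, Theorem~\ref{mtp theorem} is applied on each slice with the dimension function $g$ (legitimate since $r^{-m}g(r)=r^{-k}f(r)$ is monotonic), and a slicing lemma (Lemma~\ref{Slicing Lemma}) lifts the resulting $\cH^g$-statement back to $\cH^f$ on $\R^k$. This is precisely the ``slicing'' mechanism the paper illustrates in Propositions~\ref{rectangles slicing result 1} and~\ref{rectangles slicing result 2}, so no further comparison is needed.
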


\begin{remark*}
Note that when $l = 0$ in Theorem \ref{BV Slicing Theorem} we recover the Mass Transference Principle (Theorem \ref{mtp theorem}).
\end{remark*}

Conditions $(i)$ and $(ii)$ essentially say that $V$ should intersect every plane and that the angle of intersection between $V$ and each plane should be bounded away from 0. In other words every plane $R_n$ ought not to be parallel to $V$ and should intersect $V$ in precisely one place. These conditions are technical and come about as a consequence of the ``slicing'' technique used by Beresnevich and Velani to prove Theorem \ref{BV Slicing Theorem} in \cite{BV Slicing} (for a simple demonstration of the idea of ``slicing'' see the proofs of Propositions \ref{rectangles slicing result 1} and \ref{rectangles slicing result 2} in Section \ref{rectangles section}). It was conjectured by Beresnevich et al. \cite[Conjecture E]{BBDV ref} that Theorem \ref{BV Slicing Theorem} should also be true without conditions $(i)$ and $(ii)$. Recently, this conjecture has been settled by Allen and Beresnevich in \cite{AB ref} by using a proof closer in strategy to that used by Beresnevich and Velani to prove the original Mass Transference Principle in \cite{BV MTP}, rather than ``slicing".

\begin{theorem}[Allen -- Beresnevich \cite{AB ref}] \label{mtp for linear forms theorem}
Let $\cR$ and $\U$ be as given above. Let $f$ and $g : r \to g(r) := r^{-l}f(r)$ be dimension functions such that $r^{-k}f(r)$ is monotonic and let $\Om$ be a ball in $\R^k$. Suppose for any ball $B$ in $\Om$ that
\[\cH^k\left(B \cap \La\left(g(\U)^{\frac{1}{m}}\right)\right) = \cH^k(B).\]
Then, for any ball $B$ in $\Om$,
\[\cH^f(B \cap \La(\U)) = \cH^f(B).\]
\end{theorem}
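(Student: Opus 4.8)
The plan is to adapt the proof of the original Mass Transference Principle (Theorem~\ref{mtp theorem}) rather than the slicing argument behind Theorem~\ref{BV Slicing Theorem}, so that conditions $(i)$ and $(ii)$ are never invoked. The key observation is that $\La(\U)$ is a $\limsup$ set built from neighbourhoods of the $l$-dimensional planes $R_n$, and near any fixed point of $R_n$ the set $\Delta(R_n,\U_n)$ locally looks like a ``slab'': in a small ball it is comparable to a product of an $l$-dimensional disc of radius $\asymp 1$ with an $m$-dimensional ball of radius $\asymp\U_n$. The first step is therefore to set up this local product structure carefully: cover each $R_n$ (restricted to a large ball containing $\Om$) by a bounded-overlap family of balls of radius comparable to $\U_n$, so that inside each such ball $\Delta(R_n,\U_n)$ contains a translate of (disc)$\times$(ball) and is contained in a slightly larger one. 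This reduces the problem to working with these elementary slabs rather than with arbitrary planes.

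Next I would reprise the core machinery of \cite{BV MTP}: given a ball $B$ in $\Om$, one wants to construct a Cantor-like subset of $B\cap\La(\U)$ on which a mass distribution can be supported whose $f$-dimensional Frostman condition is verified at every scale. The engine here is the hypothesis that $\La(g(\U)^{1/m})$ has full $\cH^k$-measure in every ball, combined with the ``$\cH^k$ to counting'' mechanism (the analogue of the KGB-Lemma): from full Lebesgue measure of the $\limsup$ of the thinned slabs one extracts, at each stage of the construction and inside each previously chosen ball, a large finite subfamily of the slabs $\Delta(R_n,g(\U_n)^{1/m})$ that are ``locally well-distributed''. The extra ingredient compared to the balls-to-balls case is that each such slab must now be further subdivided along its $l$ long directions into $\asymp (g(\U_n)^{1/m}/\U_n)^{-l} = (\U_n^{-l}f(\U_n))^{-l/\dots}$ many sub-balls of radius $\asymp\U_n$ — this is exactly where the relation $g(r)=r^{-l}f(r)$ and the substitution $\U_n \mapsto g(\U_n)^{1/m}$ are engineered to make the bookkeeping balance: a $k$-dimensional slab of thickness $g(\U_n)^{1/m}$ has volume comparable to the number of radius-$\U_n$ balls it contains times $\U_n^k$, and $f(\U_n)$ is precisely the $f$-measure weight one wants to assign to each such ball.

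With the finite well-distributed subfamilies in hand at every level, I would build the Cantor set $\cK_\infty = \bigcap_j \cK_j$ in the standard way, define a measure $\mu$ by distributing mass at level $j+1$ proportionally to the local density of the chosen radius-$\U_n$ balls within each level-$j$ ball, and then verify the mass distribution principle: for an arbitrary ball $A$ of radius $\rho$ one estimates $\mu(A)$ by comparing $\rho$ to the scales $\U_{n}$ appearing in the construction and using the bounded-overlap and well-distribution properties to show $\mu(A) \ll f(\rho)$ (using monotonicity of $r^{-k}f(r)$ to handle the two regimes where $A$ meets few versus many level sets). This yields $\cH^f(B\cap\La(\U)) \gg \mu(\cK_\infty) > 0$, and then a standard localisation/covering argument upgrades positivity to full measure $\cH^f(B\cap\La(\U)) = \cH^f(B)$ for every ball $B$ in $\Om$.

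\textbf{Main obstacle.} The delicate point is the ``$\cH^k$ to finite well-distributed subfamily'' extraction when the objects are slabs rather than balls: one must show that full Lebesgue measure of $\limsup_n \Delta(R_n, g(\U_n)^{1/m})$ forces, inside every small ball and at arbitrarily fine scales, the existence of many of these slabs whose radius-$\U_n$ sub-balls are simultaneously abundant \emph{and} can be chosen pairwise far apart after sub-division. Controlling the possible near-parallelism or clustering of different planes $R_n$ and $R_{n'}$ — precisely the phenomenon conditions $(i)$ and $(ii)$ were designed to preclude — is what makes this harder than the original argument, and handling it robustly (presumably by a covering/pigeonhole argument on the sub-balls that is insensitive to the relative position of the planes) is the crux of the whole proof.
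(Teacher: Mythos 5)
Your overall strategy is the right one, and it matches what this survey reports about the actual proof: Allen and Beresnevich prove Theorem~\ref{mtp for linear forms theorem} by reworking the Cantor-set-plus-mass-distribution construction of \cite{BV MTP} directly for neighbourhoods of planes, precisely so that the slicing step (and hence conditions $(i)$ and $(ii)$ of Theorem~\ref{BV Slicing Theorem}) never enters. The survey itself contains no proof of this theorem --- it is quoted from \cite{AB ref} --- so the comparison can only be against that described strategy, and your outline is consistent with it, including the correct bookkeeping identity that a slab of thickness $g(\U_n)^{1/m}$ inside a ball of radius $\rho$ has Lebesgue volume $\asymp \rho^l\,\U_n^{-l}f(\U_n)$, which is exactly the total $f$-weight of the $\asymp(\rho/\U_n)^l$ balls of radius $\U_n$ needed to cover $\Delta(R_n,\U_n)$ there.

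However, as a proof the proposal has a genuine gap, and you have located it yourself: the ``$\cH^k$ to finite well-distributed subfamily'' extraction --- the analogue of the KGB-Lemma for slabs --- is declared to be ``the crux of the whole proof'' and then not carried out. That lemma \emph{is} the proof; everything else (building $\cK_\infty$, defining $\mu$, verifying the Frostman condition in the two regimes via monotonicity of $r^{-k}f(r)$, and upgrading positivity to full measure) is routine once it is in hand. Two further points. First, your diagnosis of \emph{why} this step is delicate is off: conditions $(i)$ and $(ii)$ were needed in the slicing proof because one intersects every plane with a single fixed transversal subspace $V$; they have nothing to do with ``near-parallelism or clustering'' of distinct planes $R_n$, $R_{n'}$. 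In the direct construction, disjointness of the selected $\U_n$-balls (or of the enlarged balls of radius $g(\U_n)^{1/m}$ about the same centres) is \emph{arranged} by a Vitali-type selection from the full-measure hypothesis, one $n$ at a time, so the relative position of different planes is not the obstacle. Second, you should make explicit the case distinction between $g(\U_n)^{1/m}\geq\U_n$ and $g(\U_n)^{1/m}<\U_n$ (the thickened set need not contain the original one), which is where the monotonicity hypothesis on $r^{-k}f(r)$ is actually consumed; in the second regime the conclusion follows much more directly. As it stands the submission is a correct plan of attack with the central lemma missing, not a proof.
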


Although Theorem \ref{BV Slicing Theorem} itself has some interesting consequences, see \cite{BBDV ref, BV Slicing}, it seems likely that there will be applications of Theorem \ref{mtp for linear forms theorem} which are unachievable using Theorem \ref{BV Slicing Theorem}. In particular, in Section \ref{general mtp for linear forms section} we record some very general statements obtained in \cite{AB ref} which essentially rephrase Theorem \ref{mtp for linear forms theorem} as statements for transferring Lebesgue measure statements to Hausdorff measure statements for sets of $\psi$-approximable (and $\Psi$-approximable) points.

Before that we state some more concrete applications of Theorems \ref{BV Slicing Theorem} and \ref{mtp for linear forms theorem}; namely, we mention Hausdorff measure analogues of the homogeneous and inhomogeneous Khintchine--Groshev Theorems obtained in \cite{AB ref, BBDV ref}. We also use the Hausdorff measure analogue of the inhomogeneous Khintchine--Groshev Theorem to make some remarks on a theorem of Levesley \cite{Levesley ref}.

\subsection{Hausdorff measure Khintchine--Groshev statements}

Throughout this section, let $n \geq 1$ and $m \geq 1$ be integers and denote by $\mathbb{I}^{nm}$ the unit cube $[0,1]^{nm}\subset\R^{nm}$. 

Given a function $\psi : \N \to \R^+$, let $\cA_{n,m}(\psi)$ denote the set of $\x \in \mathbb{I}^{nm}$ such that
\[|\q\x + \p| < \psi(|\q|)\]
for infinitely many $(\p,\q) \in \Z^m \times \Z^n \setminus \{\0\}$. 

Here, $|\cdot|$ denotes the supremum norm and we think of $\x = (x_{ij})$ as an $n \times m$ matrix and of $\p$ and $\q$ as row vectors. So $\q\x$ represents a point in $\R^m$ given by the system
\[q_1x_{1j} + \dots + q_nx_{nj} \quad (1 \leq j \leq m)\]
of $m$ real linear forms in $n$ variables. We say that the points in $\cA_{n,m}(\psi)$ are \emph{$\psi$-approximable}. As with many sets of interest in Diophantine approximation, $\cA_{n,m}(\psi)$ satisfies an elegant zero-one law with respect to Lebesgue measure.

\begin{theorem}[Khintchine--Groshev Theorem \cite{BV KG}] \label{KG Theorem}
Let $\psi : \N \to \R^+$ and $nm > 1$. Then
$$
|\cA_{n,m}(\psi)| =\left\{
\begin{array}{lcl}
 \displaystyle 0 \quad & \mbox{ \text{if }
 $\sum_{q=1}^{\infty}{q^{n-1}\psi(q)^m} < \infty$,}\\[5ex]
 \displaystyle 1 \quad & \mbox{ \text{if }
 $\sum_{q=1}^{\infty}{q^{n-1}\psi(q)^m} = \infty$.}
\end{array}
\right.
$$
\end{theorem}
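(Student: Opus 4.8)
The plan is to treat the two cases separately. The convergence case is the "easy" direction and does not require the Mass Transference Principle at all: it follows from a direct covering argument combined with the Borel--Cantelli lemma. First I would fix a norm and observe that for each $q \in \N$, the set of $\x \in \I^{nm}$ satisfying $|\q\x + \p| < \psi(q)$ for a given admissible pair $(\p,\q)$ with $|\q| = q$ is contained in a neighbourhood of a finite union of $(m(n-1))$-dimensional affine subspaces (the solution set of the $m$ linear forms), intersected with the unit cube. A standard computation shows that this set can be covered by $\asymp q^{nm - m} \psi(q)^{-m} \cdot (\psi(q)/q)^{nm}$-many balls of radius $\asymp \psi(q)/q$, or more simply that its $nm$-dimensional Lebesgue measure is $\ll \psi(q)^m$; summing over the $\asymp q^{n}$ choices of $\q$ with $|\q| = q$ (and the $\asymp q^m \psi(q)^m$-plus-$O(1)$ relevant $\p$) one obtains that the measure of the set of $\x$ approximable at "level $q$" is $\ll q^{n-1}\psi(q)^m$. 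Since $\sum q^{n-1}\psi(q)^m < \infty$, the Borel--Cantelli lemma gives $|\cA_{n,m}(\psi)| = 0$.

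For the divergence case, the natural strategy is \emph{not} to prove it from scratch but to invoke the machinery already set up: Theorem~\ref{mtp for linear forms theorem} (or, with the extra geometric hypotheses, Theorem~\ref{BV Slicing Theorem}) reduces a Hausdorff-measure statement to a Lebesgue-measure statement for a $\limsup$ set built from neighbourhoods of planes. However, since the target here is itself only a Lebesgue-measure ($\cH^{nm}$) statement, the cleaner route is to deduce it from the \emph{homogeneous} linear-forms analogue of Khintchine's theorem for a single linear form via a transference/induction on dimension, or --- most efficiently --- to cite that Theorem~\ref{KG Theorem} in the divergence case is exactly the classical Khintchine--Groshev theorem, whose proof proceeds by establishing quasi-independence on average of the relevant events and applying the divergence Borel--Cantelli lemma (the Erd\H{o}s--Chung form). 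Concretely: set $A_q = \{\x \in \I^{nm} : |\q\x+\p| < \psi(q) \text{ for some admissible } \p,\q \text{ with } |\q|=q\}$, show $\sum_q |A_q| = \infty$ using the divergence hypothesis, and then bound $\sum_{q,q' \le Q} |A_q \cap A_{q'}| \ll \big(\sum_{q \le Q}|A_q|\big)^2$ by separating "generic" pairs (where the defining systems are in general position, giving near-independence) from "degenerate" pairs (whose total contribution is lower order). The key input making the quasi-independence work when $nm > 1$ is that one has a whole family of $n$ integer vectors $\q$ at each height, which provides enough averaging; this is precisely why the case $nm=1$ (genuine one-dimensional Khintchine) is excluded and needs monotonicity.

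The step I expect to be the main obstacle is the overlap estimate $\sum_{q,q'}|A_q \cap A_{q'}| \ll (\sum|A_q|)^2$ in the divergence case: controlling the measure of the intersection of two families of neighbourhoods of affine subspaces requires a careful case analysis according to whether the two integer systems $(\p,\q)$ and $(\p',\q')$ are linearly dependent, and the degenerate contributions must be summed using elementary counting of integer points on lattices inside the relevant slabs. Everything else --- the convergence half, the reduction of the Hausdorff-measure version of the theorem to this Lebesgue statement via Theorem~\ref{mtp for linear forms theorem}, and the verification that the dimension-function hypotheses are met when $f(r) = r^s$ --- is routine given the tools already assembled in the excerpt. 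In writing the final proof I would therefore state the convergence case in full, and for the divergence case either reproduce the quasi-independence argument or, in keeping with the survey nature of the article, simply attribute it to the classical literature and use it as a black box feeding into the Hausdorff-measure refinement.
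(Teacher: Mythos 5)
This theorem is stated in the survey without proof: it is attributed to \cite{BV KG} together with the earlier results of Khintchine, Groshev, Gallagher, Schmidt and Sprind\v{z}uk, so your plan --- prove the convergence half by the elementary covering/Borel--Cantelli argument and treat the divergence half (quasi-independence on average plus the divergence Borel--Cantelli lemma) as a black box from the classical literature --- is both the correct strategy and essentially what the paper itself does. Two small points: in the convergence half the number of $\q\in\Z^n$ with $|\q|=q$ is $\asymp q^{n-1}$, not $q^{n}$, and each such $\q$ contributes a set of measure $\asymp\psi(q)^m$, which is exactly how the bound $|A_q|\ll q^{n-1}\psi(q)^m$ arises (your final bound is right even though the intermediate count is off); and for the divergence half note that the case $n=2$ without monotonicity is genuinely deep --- it is the main content of \cite{BV KG} and was only settled in 2010 --- so ``reproducing the quasi-independence argument'' is not realistic there and the citation is the right call.
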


The earliest versions of this theorem are attributed to Khintchine and Groshev ~\cite{Groshev38, kh}. These were subject to extra assumptions including monotonicity of $\psi$. Due to the famous counter example of Duffin and Schaeffer \cite{Duffin-Schaeffer ref} we know that if we have $m = n = 1$ then monotonicity cannot be removed. However, when we insist that $nm > 1$ monotonicity of $\psi$ is unnecessary. In the case that $n=1$ or $n \geq 3$ this follows, respectively, from results due to Gallagher~\cite{Gallagher ref} and Schmidt~\cite{Schmidt ref}. In the case that $n \geq 3$ this also follows from a result of Sprind\v{z}uk~\cite[Chapter 1, Section 5]{Sprindzuk ref}. For further information we refer the reader to the detailed survey~\cite{BBDV ref}. It was conjectured by Beresnevich et al. in \cite[Conjecture A]{BBDV ref} that monotonicity should also be unnecessary when $n = 2$. This conjecture was finally settled by Beresnevich and Velani in \cite{BV KG} leaving the above modern statement of the Khintchine--Groshev Theorem, which is the best possible.

Regarding the Hausdorff measure theory, combining Theorem \ref{mtp for linear forms theorem} with Theorem \ref{KG Theorem} yields the following.

\begin{theorem}[Hausdorff measure Khintchine--Groshev Theorem \cite{AB ref, BBDV ref}] \label{Khintchine-Groshev Hausdorff analogue theorem}
Let $\psi: \N \to \R^+$ be any approximating function and let $nm > 1$. Let $f$ and $g: r \to g(r) = r^{-m(n-1)}f(r)$ be dimension functions such that $r^{-nm}f(r)$ is monotonic. Then,
$$
\cH^f(\cA_{n,m}(\psi)) =\left\{
\begin{array}{lcl}
 \displaystyle 0 \quad & \mbox{ \text{if }
 $\sum_{q=1}^{\infty}{q^{n+m-1}g\left(\frac{\psi(q)}{q}\right)} < \infty$,}\\[5ex]
 \displaystyle \cH^f(\mathbb{I}^{nm}) \quad & \mbox{ \text{if }
 $\sum_{q=1}^{\infty}{q^{n+m-1}g\left(\frac{\psi(q)}{q}\right)} = \infty$.}
\end{array}
\right.
$$
\end{theorem}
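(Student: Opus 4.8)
The plan is to deduce the theorem by combining the mass transference principle for systems of linear forms (Theorem~\ref{mtp for linear forms theorem}) with the (Lebesgue) Khintchine--Groshev Theorem (Theorem~\ref{KG Theorem}); the slicing version (Theorem~\ref{BV Slicing Theorem}) does not apply here, because no single $m$-dimensional subspace $V$ is transverse with a uniform angle to all of the planes that arise, so conditions $(i)$--$(ii)$ there fail. Write $k=nm$, $l=m(n-1)$ and $q=|\q|$, so $k-l=m$ and $g(r)=r^{-l}f(r)$ is exactly the auxiliary function of Theorem~\ref{mtp for linear forms theorem}. Two reductions come first. If $\psi(q)>\tfrac12$ for infinitely many $q$ then rounding $\q\x$ coordinatewise shows $\cA_{n,m}(\psi)=\I^{nm}$ and the statement is trivial, so I may assume $\psi(q)\le\tfrac12$ for all large $q$; in particular $\psi(q)/q\to0$. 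I may also assume $r^{-nm}f(r)$ is non-increasing: if it is non-decreasing, then either its limit at $0$ vanishes, forcing $\cH^f(\I^{nm})=0$ and making both conclusions trivial, or the limit is positive, in which case $\cH^f$ is comparable to $nm$-dimensional Lebesgue measure on $\I^{nm}$ and $g(r)\asymp r^{m}$ for small $r$, so that both halves of the theorem reduce immediately to Theorem~\ref{KG Theorem}.

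For the convergence half I would run the usual Hausdorff--Cantelli covering argument. For $\q\in\Z^{n}\setminus\{\0\}$ and $\p\in\Z^{m}$ set $R_{\q,\p}:=\{\x\in\R^{nm}:\q\x+\p=\0\}$, an $l$-dimensional plane. The slab $\{\x\in\I^{nm}:|\q\x+\p|<\psi(q)\}$ lies in the $(\psi(q)/q)$-neighbourhood of $R_{\q,\p}$ and so is covered by $\asymp(\psi(q)/q)^{-l}$ balls of radius $\asymp\psi(q)/q$. Since only $\asymp q^{m}$ choices of $\p$ meet $\I^{nm}$, there are $\asymp q^{n-1}$ vectors $\q$ with $|\q|=q$, and $\cA_{n,m}(\psi)$ is contained in the union of these slabs over all $q\ge N$ for every $N$, applying $f$ to the radii and using $(\psi(q)/q)^{-l}f(\psi(q)/q)=g(\psi(q)/q)$ gives
\[
\cH^{f}\big(\cA_{n,m}(\psi)\big)\ \ll\ \sum_{q\ge N}q^{n+m-1}\,g\!\left(\tfrac{\psi(q)}{q}\right)\ \xrightarrow[\ N\to\infty\ ]{}\ 0 .
\]

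For the divergence half I would realise $\cA_{n,m}(\psi)$ as a $\limsup$ of metric neighbourhoods of the planes $R_{\q,\p}$. Equipping $\R^{nm}$ with the supremum norm and using that the defining forms of $R_{\q,\p}$ have block-disjoint coefficient vectors, one computes $\dist(\x,R_{\q,\p})=|\q\x+\p|/\|\q\|_{1}$, where $\|\cdot\|_{1}$ is the $\ell^{1}$-norm on $\R^{n}$. Taking $\cR=(R_{\q,\p})$ --- after discarding small denominators and numerators too large to place $R_{\q,\p}$ near a fixed large ball $\Om$, so that the data tend to $0$ --- together with $\U_{(\q,\p)}:=\psi(q)/\|\q\|_{1}$, one has $\La(\U)\cap\I^{nm}=\cA_{n,m}(\psi)$ exactly. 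To apply Theorem~\ref{mtp for linear forms theorem} I must check that $\La\big(g(\U)^{1/m}\big)$ has full $nm$-dimensional Lebesgue measure in every ball in $\Om$. Here $\x\in\Delta\big(R_{\q,\p},g(\U_{(\q,\p)})^{1/m}\big)$ means $|\q\x+\p|<\|\q\|_{1}\,g(\psi(q)/\|\q\|_{1})^{1/m}$; writing $g(t)=t^{m}\phi(t)$ with $\phi(t)=t^{-nm}f(t)$ non-increasing, the map $s\mapsto s\,g(c/s)^{1/m}=c\,\phi(c/s)^{1/m}$ is non-decreasing, so from $\|\q\|_{1}\ge q$ we get
\[
\|\q\|_{1}\,g\!\left(\tfrac{\psi(q)}{\|\q\|_{1}}\right)^{1/m}\ \ge\ q\,g\!\left(\tfrac{\psi(q)}{q}\right)^{1/m}=:\Psi_{0}(q) .
\]
Hence $\La(g(\U)^{1/m})$ contains $\cA_{n,m}(\Psi_{0})$ together with the relevant integer translates, and since $nm>1$ and
\[
\sum_{q}q^{n-1}\,\Psi_{0}(q)^{m}=\sum_{q}q^{n+m-1}\,g\!\left(\tfrac{\psi(q)}{q}\right)=\infty ,
\]
Theorem~\ref{KG Theorem} gives $|\cA_{n,m}(\Psi_{0})|=1$, so by translation-invariance $\La(g(\U)^{1/m})$ has full Lebesgue measure in every ball in $\Om$. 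Theorem~\ref{mtp for linear forms theorem} then yields $\cH^{f}(B\cap\La(\U))=\cH^{f}(B)$ for every ball $B\subseteq\Om$; taking $B$ to contain $\I^{nm}$ and intersecting with $\I^{nm}$ gives $\cH^{f}(\cA_{n,m}(\psi))=\cH^{f}(\I^{nm})$.

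The step I expect to demand the most care is the norm bookkeeping in the divergence half: choosing the metric and the function $\U$ so that $\cA_{n,m}(\psi)$ is \emph{exactly} a $\limsup$ of neighbourhoods of the $R_{\q,\p}$, and so that the Lebesgue input required by Theorem~\ref{mtp for linear forms theorem} is precisely the divergence half of Khintchine--Groshev with \emph{no loss} in the controlling sum $\sum_{q}q^{n+m-1}g(\psi(q)/q)$. The reduction to $r^{-nm}f(r)$ non-increasing is exactly what makes the comparison $\|\q\|_{1}\ge q$ point the right way; the remaining points --- trimming denominators and numerators to secure $\U_{n}\to0$, and passing from ``every ball in $\Om$'' to $\I^{nm}$ --- are routine.
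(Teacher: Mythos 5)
Your proposal is correct and follows exactly the route the paper indicates for this theorem: realise $\cA_{n,m}(\psi)$ as a $\limsup$ of neighbourhoods of the rational planes $R_{\q,\p}$, feed the divergence half of Theorem~\ref{KG Theorem} into Theorem~\ref{mtp for linear forms theorem}, and dispose of the convergence half by the standard Hausdorff--Cantelli covering argument. (Your aside that Theorem~\ref{BV Slicing Theorem} ``does not apply'' is slightly overstated --- the paper notes that \cite{AB ref} also derives this result from Theorem~\ref{BV Slicing Theorem} combined with an additional slicing step --- but this has no bearing on the correctness of the route you take.)
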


For completeness, we remark here that before the above statement appeared in \cite{BBDV ref}, the Hausdorff measures and dimension of the sets $\cA_{n,m}(\psi)$ had already been studied by a number of people. Indeed, earlier similar results, albeit subject to further constraints, had already been established. In particular, the first Hausdorff measure result in this direction was obtained by Dickinson and Velani \cite{DV1997 ref} and, even before that, the first Hausdorff dimension results had already been established by Bovey and Dodson \cite{Bovey-Dodson ref}.

Returning to Theorem~\ref{Khintchine-Groshev Hausdorff analogue theorem} we note that the statement in~\cite{BBDV ref} additionally required $\psi$ to be monotonic when $n=2$. At that time it was still unproven that the Khintchine--Groshev Theorem was true without monotonicity in the case that $n=2$. However, it was conjectured in \cite{BBDV ref} that, subject to the validity of the Khintchine--Groshev Theorem without assuming monotonicity when $n=2$ (\ie Theorem~\ref{KG Theorem}), it should be possible to use Theorem \ref{BV Slicing Theorem} to remove this final monotonicity condition also from the Hausdorff measure version of the Khintchine--Groshev theorem, giving Theorem \ref{Khintchine-Groshev Hausdorff analogue theorem}. This conjecture has been verified in \cite{AB ref} where, in fact, two proofs of Theorem \ref{Khintchine-Groshev Hausdorff analogue theorem} are given. The first uses a combination of Theorem \ref{BV Slicing Theorem} and ``slicing'', thus verifying the conjecture, and the second uses Theorem \ref{mtp for linear forms theorem}.

In \cite{AB ref}, a Hausdorff measure version of the inhomogeneous analogue of the Khintchine--Groshev theorem is also established. If we are given an approximating function $\psi: \N \to \R^+$ and a fixed $\y \in \I^m$ then we will denote by $\cA_{n,m}^{\y}(\psi)$ the set of $\x \in \mathbb{I}^{nm}$ such that
\[|\q\x + \p - \y| < \psi(|\q|)\]
for infinitely many $(\p,\q) \in \Z^m \times \Z^n \setminus \{\0\}$.

Regarding the Lebesgue measure of sets $\cA_{n,m}^{\y}(\psi)$ of inhomogeneously $\psi$-approximable points, we have the following inhomogeneous analogue of the Khintchine--Groshev Theorem (Theorem \ref{KG Theorem}).

\begin{theorem}[Inhomogeneous Khintchine--Groshev Theorem] \label{IHKG theorem}
Let $m,n \geq 1$ be integers and let $\y \in \I^m$. If $\psi: \N \to \R^+$ is an approximating function which is assumed to be monotonic if $n = 1$ or $n=2$, then
$$
|\cA_{n,m}^{\y}(\psi)| =\left\{
\begin{array}{lcl}
 \displaystyle 0 \quad & \mbox{ \text{if }
 $\sum_{q=1}^{\infty}{q^{n-1}\psi(q)^m} < \infty$,}\\[5ex]
 \displaystyle 1 \quad & \mbox{ \text{if }
 $\sum_{q=1}^{\infty}{q^{n-1}\psi(q)^m} = \infty$.}
\end{array}
\right.
$$
\end{theorem}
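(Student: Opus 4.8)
The plan is to establish the two implications separately: the convergence statement is a routine Borel--Cantelli estimate, uniform in $\y$ and needing no monotonicity, whereas the divergence statement is the substantive one and I would obtain it by transferring the \emph{homogeneous} Khintchine--Groshev Theorem (Theorem~\ref{KG Theorem}) to the inhomogeneous setting. For convergence, assume $\sum_{q\ge 1}q^{n-1}\psi(q)^m<\infty$; then $\psi(q)\to 0$, so $\psi(q)<1/2$ for all large $q$. For $\q\in\Z^n\setminus\{\0\}$ with $|\q|=q$, identify $\I^{nm}$ with the product of the $m$ column spaces $\I^n$ and write $\x=(\x^{(1)},\dots,\x^{(m)})$; then $E_\q:=\{\x\in\I^{nm}:|\q\x+\p-\y|<\psi(q)\text{ for some }\p\in\Z^m\}$ equals the product $F_1\times\cdots\times F_m$ with $F_j=\{\x^{(j)}\in\I^n:\dist(\q\cdot\x^{(j)}-y_j,\Z)<\psi(q)\}$. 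Since some coordinate of $\q$ has modulus $q$, a one-variable Fubini argument gives $|F_j|\ll\psi(q)$ uniformly in $\y$ (the push-forward of Lebesgue measure under $\x^{(j)}\mapsto\q\cdot\x^{(j)}$ has density $\le 1/q$ on an interval of length $\ll q$), so $|E_\q|\ll\psi(q)^m$. As each $\q$ admits only $\ll q^m$ values of $\p$ making the constraint non-vacuous, $\cA_{n,m}^{\y}(\psi)\subseteq\limsup_{\q}E_\q$; since $\sum_{\q}|E_\q|\ll\sum_{q}q^{n-1}\psi(q)^m<\infty$, Borel--Cantelli gives $|\cA_{n,m}^{\y}(\psi)|=0$.

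For divergence, assume $\sum_{q\ge 1}q^{n-1}\psi(q)^m=\infty$; the goal is $|\cA_{n,m}^{\y}(\psi)|=1$. Here $\cA_{n,m}^{\y}(\psi)$ is the $\limsup$, over $(\p,\q)\in\Z^m\times(\Z^n\setminus\{\0\})$, of the $\big(\psi(|\q|)/|\q|\big)$-neighbourhoods in $\I^{nm}$ of the affine subspaces $R_{\p,\q}^{\y}:=\{\x:\q\x+\p=\y\}$, each of dimension $m(n-1)$. When $n\ge 2$ I would apply the inhomogeneous transference principle of Beresnevich and Velani: its inputs are (a) the homogeneous full-measure statement $|\cA_{n,m}(\psi)|=1$ throughout the divergence range --- which is exactly Theorem~\ref{KG Theorem}, valid for every $n\ge 2$ with \emph{no} monotonicity assumption --- and (b) a mild non-degeneracy/intersection property of the family $\{R_{\p,\q}^{\y}\}$, readily checked as these subspaces are translates of the kernels of the maps $\x\mapsto\q\x$ and are suitably separated; the principle then delivers $|\cA_{n,m}^{\y}(\psi)|=1$ for every $\y\in\I^m$. (Equivalently, one may route the deduction through ubiquitous systems, upgrading the homogeneous statement to local ubiquity of the inhomogeneous resonant family with respect to $q\mapsto\psi(q)/q$ and invoking the divergence half of the ubiquity machinery.) When $n=1$ the resonant sets degenerate to points $\x=(\y-\p)/q$ and $\cA_{1,m}^{\y}(\psi)$ is the set of simultaneously inhomogeneously $\psi$-approximable points of $\I^m$; for monotone $\psi$ its divergence case is the classical inhomogeneous Khintchine theorem for simultaneous approximation. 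The monotonicity hypotheses at $n=1,2$ are forced by the reach of the available homogeneous inputs and transference arguments in these low-dimensional ranges; in particular, for $n=m=1$, specialising to $\y=\0$ recovers the homogeneous problem, so the Duffin--Schaeffer counterexample shows monotonicity genuinely cannot be dropped there.

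The main obstacle lies entirely in the divergence direction and comes in two layers. The first is the homogeneous input itself --- Theorem~\ref{KG Theorem} without monotonicity, the case $n=2$ being the deep theorem of Beresnevich and Velani. The second, granting that input, is to verify that the inhomogeneous affine resonant subspaces $R_{\p,\q}^{\y}$ satisfy the structural conditions needed to run the inhomogeneous transference principle --- equivalently, to establish local ubiquity of this family uniformly in $\y$ --- which requires controlling simultaneously the geometry of systems of $m$ linear forms in $n$ variables, the spacing of these subspaces, and the inhomogeneous shift. It is precisely the delicacy of this second step for small $n$ that accounts for the monotonicity conditions when $n\le 2$.
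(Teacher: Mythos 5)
First, a point of comparison: the paper does not actually prove this theorem --- it is quoted from the literature, with the case $n\ge 3$ attributed to a direct argument of Sprind\v{z}uk and the cases $n=1,2$ (with monotonicity) to the ubiquity results of Beresnevich, Dickinson and Velani \cite{BDV limsup sets}. Your convergence half is correct and is exactly the standard covering/Borel--Cantelli argument alluded to in Remark \ref{convergence remark}: the factorisation of $E_{\q}$ over the $m$ columns and the bound $|F_j|\ll\psi(q)$ are fine, and indeed no monotonicity is needed there.

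The divergence half, however, contains a genuine gap. You treat the ``inhomogeneous transference principle of Beresnevich and Velani'' as a black box whose inputs are the homogeneous Khintchine--Groshev theorem plus a ``mild non-degeneracy/intersection property \dots readily checked''. If that were so, your argument would establish the divergence statement for $n=2$ \emph{without} monotonicity, since your input (a) --- Theorem \ref{KG Theorem} --- holds without monotonicity for all $nm>1$. That conclusion is strictly stronger than the theorem being proved, is not established in any of the cited literature, and contradicts your own closing remark that monotonicity at $n=1,2$ is ``forced by the reach of the available \dots transference arguments''. This tension shows that the hypothesis verification you wave through is precisely where the difficulty (and the monotonicity) lives. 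Moreover, the inhomogeneous transference principle is designed to transfer zero-measure/extremality statements under intersection and contraction hypotheses; it is not an off-the-shelf device converting a homogeneous divergence full-measure statement into an inhomogeneous one. The routes that actually work are the ones the paper cites: for $n\ge 3$, Sprind\v{z}uk's direct quasi-independence (``mean--variance'') argument, which makes no use of the homogeneous case; and for $n=1,2$, the ubiquity machinery of \cite{BDV limsup sets}, where monotonicity enters in establishing that the inhomogeneous resonant sets $R_{\p,\q}^{\y}$ form a locally ubiquitous system (via uniform distribution for $n=1$ and a variance computation for $n=2$, as the paper notes in its discussion of Levesley's proof). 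Your parenthetical ``equivalently, one may route the deduction through ubiquitous systems'' names the right strategy, but the ubiquity verification is the entire content of the divergence proof and is absent from your proposal.
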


When $n \geq 3$ above theorem is a consequence of a result due to Sprind\v{z}uk \cite[Chapter 1, Section 5]{Sprindzuk ref}. In the other cases, where monotonicity of $\psi$ is imposed, the above statement follows from results of Beresnevich, Dickinson and Velani \cite[Section 12]{BDV limsup sets}. For more detailed discussion we refer the reader to, for example, \cite{BBDV ref}.

By combining Theorem \ref{mtp for linear forms theorem} with Theorem \ref{IHKG theorem} the following Hausdorff measure analogue of Theorem~\ref{IHKG theorem} may be obtained.

\begin{theorem}[Allen -- Beresnevich \cite{AB ref}] \label{Inhomogeneous Khintchine-Groshev Hausdorff analogue theorem}
Let $m,n \geq 1$ be integers, let $\y \in \I^{m}$, and let $\psi: \N \to \R^+$ be an approximating function. Let $f$ and $g: r \to g(r)=r^{-m(n-1)}f(r)$ be dimension functions such that $r^{-nm}f(r)$ is monotonic. In the case that $n = 1$ or $n=2$ suppose also that $\psi$ is monotonically decreasing. Then,
$$
\cH^f(\cA_{n,m}^{\y}(\psi)) =\left\{
\begin{array}{lcl}
 \displaystyle 0 \quad & \mbox{ \text{if }
 $\sum_{q=1}^{\infty}{q^{n+m-1}g\left(\frac{\psi(q)}{q}\right)} < \infty$,}\\[5ex]
 \displaystyle \cH^f(\mathbb{I}^{nm}) \quad & \mbox{ \text{if }
 $\sum_{q=1}^{\infty}{q^{n+m-1}g\left(\frac{\psi(q)}{q}\right)} = \infty$.}
\end{array}
\right.
$$
\end{theorem}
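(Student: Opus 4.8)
\medskip\noindent\textbf{Proof proposal.} The plan is to establish the convergence case by a direct covering argument and the divergence case by feeding the Lebesgue statement of Theorem \ref{IHKG theorem} into the Mass Transference Principle for systems of linear forms (Theorem \ref{mtp for linear forms theorem}). The geometric picture is common to both parts: for $\q\in\Z^n\setminus\{\0\}$ and $\p\in\Z^m$, the solution set of $|\q\x+\p-\y|<\psi(|\q|)$ is, up to fixed multiplicative constants, the neighbourhood $\Delta(R_{\p,\q},\psi(|\q|)/|\q|)$ of the affine plane $R_{\p,\q}:=\{\x\in\R^{nm}:\q\x+\p-\y=\0\}$, which is a product over the $m$ columns of $\x$ of affine hyperplanes in $\R^n$ and so has dimension $l:=m(n-1)$ and codimension $m$; thus $k:=nm=m+l$, exactly the set-up preceding Theorem \ref{mtp for linear forms theorem}. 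For the convergence case, finiteness of the sum forces $\psi(q)/q\to0$, so for each $N$ one covers $\cA_{n,m}^{\y}(\psi)$ by the slabs $\Delta(R_{\p,\q},\rho_\q)\cap\I^{nm}$ with $|\q|\ge N$ and $\rho_\q\asymp\psi(|\q|)/|\q|$, covers each slab by $\asymp\rho_\q^{-l}$ sets of diameter $\asymp\rho_\q$, and uses that there are $\asymp|\q|^m$ admissible $\p$ for each $\q$; this yields $\cH^f(\cA_{n,m}^{\y}(\psi))\ll\sum_{q\ge N}q^{n-1}\cdot q^m\,(\psi(q)/q)^{-l}f(\psi(q)/q)=\sum_{q\ge N}q^{n+m-1}g(\psi(q)/q)\to0$, so $\cH^f(\cA_{n,m}^{\y}(\psi))=0$.

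For the divergence case, fix a ball $\Om\supseteq\I^{nm}$ and let $\cR=(R_i)_i$ enumerate the planes $R_{\p,\q}$ meeting $\Om$, with weights $\U_i$ equal to a fixed (small) constant times $\psi(|\q_i|)/|\q_i|$; we may assume $\psi(q)/q\to0$, since otherwise $\cA_{n,m}^{\y}(\psi)$ has full $\cH^f$-measure for trivial reasons, and then $\U_i\to0$ and $\La(\U)\cap\I^{nm}\subseteq\cA_{n,m}^{\y}(\psi)$. With $g(r)=r^{-l}f(r)$ as in the statement, Theorem \ref{mtp for linear forms theorem} reduces matters to verifying $\cH^{k}(B\cap\La(g(\U)^{1/m}))=\cH^{k}(B)$ for every ball $B$ in $\Om$. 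Now $\dist(\x,R_{\p,\q})<g(\U_i)^{1/m}$ is, up to constants, the inequality $|\q\x+\p-\y|<\Psi_0(|\q|)$ with $\Psi_0(q):=q\,g(\psi(q)/q)^{1/m}$, so $\La(g(\U)^{1/m})$ is comparable (modulo $\Z^{nm}$-periodicity) to $\cA_{n,m}^{\y}(\Psi_0)$, whose Khintchine--Groshev sum equals $\sum_q q^{n-1}\Psi_0(q)^m=\sum_q q^{n+m-1}g(\psi(q)/q)$, which diverges by hypothesis. Hence Theorem \ref{IHKG theorem} gives $|\cA_{n,m}^{\y}(\Psi_0)|=1$, and, since $\cH^{k}$ is comparable to Lebesgue measure, this furnishes the hypothesis of Theorem \ref{mtp for linear forms theorem}. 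Its conclusion $\cH^f(\Om\cap\La(\U))=\cH^f(\Om)$ then gives $\cH^f(\cA_{n,m}^{\y}(\psi))=\cH^f(\I^{nm})$.

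The real obstacle is not the two transference steps but the bookkeeping that makes the appeal to Theorem \ref{IHKG theorem} legitimate --- above all its monotonicity requirement when $n\in\{1,2\}$, for the dual function $\Psi_0$ need not be monotonic even when $\psi$ is. The fix is a standard case-split: write $\Psi_0(q)=\psi(q)\,w(q)$, where $w(q)$ is the value at $s=\psi(q)/q$ of the map $s\mapsto(s^{-nm}f(s))^{1/m}$; being a composition of the non-increasing map $q\mapsto\psi(q)/q$ with this monotonic map, $w$ is monotonic. If $w$ is non-increasing then $\Psi_0$ is non-increasing and Theorem \ref{IHKG theorem} applies directly. If $w$ is non-decreasing then $s^{-nm}f(s)$ is non-increasing in $s$, and either it stays bounded as $s\to0$ --- whence $\cH^f$ is comparable to Lebesgue measure and Theorem \ref{IHKG theorem} gives the result with no transference --- or $s^{-nm}f(s)\to\infty$ and the divergence of $\sum_q q^{n+m-1}g(\psi(q)/q)$ is witnessed along a subsequence on which $\Psi_0$ can be regularised to a monotonic minorant with still-divergent series. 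The remaining points --- the constants relating $\dist(\cdot,R_{\p,\q})$-neighbourhoods to supremum-norm slabs (absorbable into the choice of $\U$, even when $f$ and $g$ are not doubling), the count of admissible $\p$ for each $\q$, and the passage between $\I^{nm}$ and arbitrary balls of $\Om$ exploiting the $\Z^{nm}$-periodicity of the $\limsup$ set --- are routine. A cleaner write-up would instead invoke the ``transference for $\Psi$-approximable points'' reformulation of Theorem \ref{mtp for linear forms theorem} recorded in Section \ref{general mtp for linear forms section}, which packages exactly the identification $\cA_{n,m}^{\y}(\psi)\leftrightarrow\cA_{n,m}^{\y}(\Psi_0)$, so that only the divergence check and the invocation of Theorem \ref{IHKG theorem} remain.
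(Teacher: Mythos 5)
Your overall architecture is exactly the one the paper indicates and attributes to \cite{AB ref}: the convergence half by the standard covering argument of Remark \ref{convergence remark}, and the divergence half by feeding Theorem \ref{IHKG theorem}, applied to the dual function $\Psi_0(q)=q\,g(\psi(q)/q)^{1/m}$, into Theorem \ref{mtp for linear forms theorem} (equivalently, into Theorem \ref{general statement: psi depending on |q|}). The geometric identification of the slabs with neighbourhoods of the planes $R_{\p,\q}$, the parameter count $l=m(n-1)$, $k=nm$, and the reduction to $\psi(q)/q\to 0$ are all fine, and for $n\geq 3$ your proof is complete since Theorem \ref{IHKG theorem} then needs no monotonicity.

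For $n\in\{1,2\}$, however, the final sub-case of your monotonicity fix has a genuine gap. In the only non-trivial regime, where $r^{-nm}f(r)$ is non-increasing and tends to infinity as $r\to 0$, you propose to ``regularise $\Psi_0$ to a monotonic minorant with still-divergent series''. Such a minorant need not exist. Take $n=m=1$ and $f(r)=r^{s}$ with $0<s<1$, so that $\Psi_0(q)=q^{1-s}\psi(q)^{s}$ and the divergence sum is $\sum_q \Psi_0(q)$; let $\psi$ be the non-increasing step function equal to $N_{j+1}^{-(2-s)/s}$ on $[N_j,N_{j+1})$ with $N_{j+1}=N_j^{2}$. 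Each block then contributes $\asymp 1$ to $\sum_q\Psi_0(q)$, so it diverges, yet $\Psi_0$ increases across each block and its running minimum on block $j$ equals $N_j^{s-3}$; consequently \emph{every} non-increasing $h\leq\Psi_0$ satisfies $\sum_q h(q)\leq \sum_j N_{j+1}N_j^{s-3}=\sum_j N_j^{s-1}<\infty$. The fallback of applying Theorem \ref{IHKG theorem} to $\psi$ itself and using $\Psi_0\geq\psi$ also fails here, because $\sum_q q^{n-1}\psi(q)^{m}$ converges in this example while $\sum_q q^{n+m-1}g(\psi(q)/q)$ diverges --- and this is exactly the regime in which the Hausdorff statement says more than the Lebesgue one. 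So the divergence part for $n\in\{1,2\}$ is not closed by the argument as written; establishing $\lvert\cA_{n,m}^{\y}(\Psi_0)\rvert=1$ in this case requires a different mechanism (this is precisely the delicate point that the proof in \cite{AB ref} has to address), and it is the one step of your proposal that does not go through.
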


\begin{remark} \label{convergence remark}
We note here that in both Theorems \ref{IHKG theorem} and \ref{Inhomogeneous Khintchine-Groshev Hausdorff analogue theorem} the monotonicity condition on $\psi$ when $n = 1$ or $n=2$ is only required for the divergence cases. For both of these theorems the proofs of the convergence parts follow from standard covering arguments for which no monotonicity conditions need to be imposed. 
\end{remark}

In the next section we show how we can use Theorem \ref{Inhomogeneous Khintchine-Groshev Hausdorff analogue theorem} to provide an alternative proof of a general inhomogeneous Jarn\'{\i}k--Besicovitch Theorem proved by Levesley \cite{Levesley ref}. Furthermore, we are able to comment on the necessity of the monotonicity condition imposed in this theorem of Levesley.

\subsection{A Theorem of Levesley}
The Hausdorff dimension of $\cA_{n,m}^{\y}(\psi)$, in the general inhomogeneous setting, was determined by Levesley in~\cite{Levesley ref}. 
Given a function $f: \N \to \R^+$, the \emph{lower order at infinity of $f$}, usually denoted by $\lambda$, is 
\[\lambda(f) = \liminf_{q \to \infty}{\frac{\log(f(q))}{\log(q)}}.\]

\begin{theorem}[Levesley,~\cite{Levesley ref}] \label{Theorem L}
Let $m, n \in \N$ and let $\psi: \N \to \R^+$ be a monotonically decreasing function. Let $\lambda$ be the lower order at infinity of $1/{\psi}$. Then, for any $\y \in \I^m$,
\[\dimh(\cA_{n,m}^{\y}(\psi)) 
       = \left\{
         \begin{array}{ll}
         m(n-1)+\frac{m+n}{\lambda + 1} & \text{when } \;\;\; \lambda > \frac{n}{m}\;
         ,\\[3ex]
         nm & \text{when } \;\;\; \lambda \leq \frac{n}{m}.
         \end{array}\right.\]
\end{theorem}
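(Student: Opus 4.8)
The plan is to deduce the dimension formula directly from the Hausdorff measure analogue of the inhomogeneous Khintchine--Groshev Theorem (Theorem \ref{Inhomogeneous Khintchine-Groshev Hausdorff analogue theorem}) by choosing appropriate dimension functions $f$ and testing the convergence/divergence of the associated sum. First I would treat the trivial regime $\lambda \le n/m$: here I want to show $\dimh(\cA_{n,m}^{\y}(\psi)) = nm$, equivalently that $\cA_{n,m}^{\y}(\psi)$ has positive $nm$-dimensional Lebesgue measure (in fact full measure in $\I^{nm}$). Apply Theorem \ref{IHKG theorem} with the test sum $\sum_q q^{n-1}\psi(q)^m$; the definition of the lower order $\lambda$ of $1/\psi$ means that for every $\eps>0$ we have $\psi(q) \ge q^{-\lambda-\eps}$ for infinitely many $q$, and monotonicity of $\psi$ lets us upgrade this to a lower bound valid on long blocks of integers, forcing $\sum_q q^{n-1}\psi(q)^m \ge \sum_q q^{n-1-m(\lambda+\eps)} = \infty$ once $\lambda + \eps < n/m$ — so in the strict subcase $\lambda < n/m$ divergence is immediate, and the boundary case $\lambda = n/m$ is handled by a slightly more careful estimate (or by a monotonicity/limiting argument on $\psi$). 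Hence $|\cA_{n,m}^{\y}(\psi)| = 1$ and $\dimh = nm$.

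For the main regime $\lambda > n/m$, set $s = m(n-1) + \frac{m+n}{\lambda+1}$; I must show $\dimh(\cA_{n,m}^{\y}(\psi)) = s$, which splits into an upper bound ($\cH^{s+\eps}$-null for all $\eps>0$) and a lower bound ($\cH^{s-\eps}$-positive, indeed $\cH^{s-\eps}(\cA_{n,m}^{\y}(\psi)) = \infty$, for all small $\eps>0$). For the upper bound I take $f(r) = r^{s+\eps}$ in the convergence half of Theorem \ref{Inhomogeneous Khintchine-Groshev Hausdorff analogue theorem}, so $g(r) = r^{s+\eps - m(n-1)} = r^{\frac{m+n}{\lambda+1}+\eps}$; note (Remark \ref{convergence remark}) that the convergence case needs no monotonicity. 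The relevant sum is $\sum_q q^{n+m-1} g(\psi(q)/q) = \sum_q q^{n+m-1}\big(\psi(q)/q\big)^{\frac{m+n}{\lambda+1}+\eps}$. Using $\psi(q) \le q^{-\lambda+\eps'}$ for all large $q$ (valid from the definition of $\lambda$ for $1/\psi$, with $\eps'$ chosen small relative to $\eps$), the general term is at most $q^{\,n+m-1 - (\lambda+1-\eps')(\frac{m+n}{\lambda+1}+\eps)}$; a short computation shows the exponent is $< -1$ for $\eps>0$ small, so the sum converges and $\cH^{s+\eps}(\cA_{n,m}^{\y}(\psi)) = 0$. For the lower bound I take $f(r) = r^{s-\eps}$, so $g(r) = r^{\frac{m+n}{\lambda+1}-\eps}$, and I need $\sum_q q^{n+m-1}(\psi(q)/q)^{\frac{m+n}{\lambda+1}-\eps} = \infty$; here I use that $\psi(q) \ge q^{-\lambda-\eps'}$ holds for infinitely many $q$, and — crucially — monotonicity of $\psi$ propagates this to all $q$ in suitable dyadic-type blocks, so the sum diverges, whence by the divergence half of Theorem \ref{Inhomogeneous Khintchine-Groshev Hausdorff analogue theorem} (where the monotonicity hypothesis on $\psi$ for $n=1,2$ is exactly what Levesley assumes) we get $\cH^{s-\eps}(\cA_{n,m}^{\y}(\psi)) = \cH^{s-\eps}(\I^{nm}) = \infty$. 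Since $\eps>0$ is arbitrary, $\dimh(\cA_{n,m}^{\y}(\psi)) = s$.

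The main obstacle is the book-keeping around the lower order $\lambda$: $\lambda(1/\psi) = \liminf \log(1/\psi(q))/\log q$ controls $\psi$ from above on \emph{all} large $q$ but only controls it from below along a \emph{subsequence}, so the divergence arguments (the $\lambda \le n/m$ case and the $\cH^{s-\eps}$ lower bound) must carefully exploit the monotonicity of $\psi$ to convert subsequential lower bounds into block lower bounds that survive summation — this is the standard but delicate step, and it is precisely where the monotonicity assumption in Levesley's theorem is used (and where, as the abstract indicates, the result genuinely fails without it). One should also verify that $g(r) = r^{-m(n-1)}f(r)$ is a legitimate dimension function and that $r^{-nm}f(r) = r^{\pm\eps}$ is monotonic for the chosen $f$, both of which are immediate for these power functions. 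A final remark: at the boundary $\lambda = n/m$ one has $m(n-1) + \frac{m+n}{\lambda+1} = m(n-1) + \frac{m+n}{n/m+1} = m(n-1) + m = nm$, so the two branches of the formula agree there, providing a useful consistency check and allowing the boundary case to be absorbed into either argument.
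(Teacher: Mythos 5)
Your treatment of the main regime $\lambda > n/m$ is essentially the paper's proof: both take $f(r)=r^{s\pm\eps}$, feed the resulting sum into Theorem \ref{Inhomogeneous Khintchine-Groshev Hausdorff analogue theorem}, and convert the subsequential lower bound $\psi(q)\ge q^{-(\lambda+\eps)}$ into divergence via dyadic blocks and monotonicity (the paper isolates this step as Lemma \ref{liminf lemma}). Where you genuinely differ is the regime $\lambda\le n/m$. The paper does not argue via Lebesgue measure there at all: it replaces $\psi$ by $\Psi(q)=\min\{\psi(q),q^{-n/m}\}$, which is monotonic, satisfies $\Psi\le\psi$ and $\lambda_\Psi=n/m$, and then invokes the already-established boundary case (for which $s_0=nm$, so the divergence half with $\delta<0$ gives $\cH^{nm+\delta}(\cA_{n,m}^{\y}(\Psi))=\infty$ and hence $\dimh\ge nm$). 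Your route through Theorem \ref{IHKG theorem} is fine when $\lambda<n/m$ strictly, but your claim that $\lvert\cA_{n,m}^{\y}(\psi)\rvert=1$ at the boundary $\lambda=n/m$ is false in general: for instance $\psi(q)=q^{-n/m}(\log q)^{-2/m}$ is eventually decreasing with $\lambda_\psi=n/m$, yet $\sum_q q^{n-1}\psi(q)^m=\sum_q q^{-1}(\log q)^{-2}<\infty$, so the set is Lebesgue-null and no ``more careful estimate'' will produce divergence. Your closing observation --- that the two branches of the formula agree at $\lambda=n/m$, so the boundary can be absorbed into the $\lambda>n/m$ argument --- is the correct repair and is exactly what the paper does (its Hausdorff-measure computation is carried out for all $\lambda\ge n/m$); just promote that from a fallback remark to the actual argument for $\lambda=n/m$, and note that the paper additionally needs the preliminary observation $\lambda_\psi\ge 0$ (forced by monotonicity) so that the case analysis is exhaustive.
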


Levesley proved the above theorem by considering the cases of $n = 1$ and $n \geq 2$ separately. In both cases his argument uses ideas from ubiquitous systems. These are combined with ideas from uniform distribution in the former case and with a more statistical (``mean-variance'') argument in the latter case. 

Using Theorem~\ref{Inhomogeneous Khintchine-Groshev Hausdorff analogue theorem}, we can give an alternative (and shorter) proof of this theorem in which all values of $m$ and $n$ are dealt with simultaneously. To prove this result using Theorem \ref{Inhomogeneous Khintchine-Groshev Hausdorff analogue theorem} we first establish a useful lemma.

\begin{lemma}\label{liminf lemma}
Let $\psi\,:\,\N \to \R^{+}$ be monotonic. 
Then,
\[
\liminf_{q\to\infty}\frac{-\log (\psi(q))}{\log q}=\liminf_{t\to\infty}\frac{-\log\left(\psi(2^{t})\right)}{\log 2^{t}}.
\]
\end{lemma}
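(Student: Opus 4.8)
The plan is to establish the equality by proving two inequalities, using the monotonicity of $\psi$ to ``interpolate'' between powers of $2$. Write $L = \liminf_{q\to\infty}\frac{-\log\psi(q)}{\log q}$ and $L' = \liminf_{t\to\infty}\frac{-\log\psi(2^t)}{\log 2^t}$. Since the sequence $(2^t)_{t\in\N}$ is a subsequence of $\N$, the inequality $L \le L'$ is immediate from the definition of $\liminf$ (a $\liminf$ over a subsequence is at least the $\liminf$ over the full sequence); this requires no monotonicity. The content of the lemma is therefore the reverse inequality $L' \le L$.

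For $L' \le L$, I would argue as follows. Fix any $q$ large and let $t = t(q)$ be the unique integer with $2^t \le q < 2^{t+1}$; note $t \to \infty$ as $q\to\infty$. Assume first that $\psi$ is non-increasing (the non-decreasing case is symmetric, swapping the roles of the endpoints). Then $\psi(2^{t+1}) \le \psi(q)$, so $-\log\psi(q) \le -\log\psi(2^{t+1})$, while $\log q \ge \log 2^{t} = t\log 2$. Hence
\[
\frac{-\log\psi(q)}{\log q} \;\le\; \frac{-\log\psi(2^{t+1})}{t\log 2} \;=\; \frac{-\log\psi(2^{t+1})}{(t+1)\log 2}\cdot\frac{t+1}{t}.
\]
Taking $\liminf$ as $q\to\infty$ (equivalently $t\to\infty$), the factor $\frac{t+1}{t}\to 1$, and the first factor has $\liminf$ equal to $L'$ (reindexing $t+1\to t$), so $L \le L'$ — wait, this gives the wrong direction, so instead I should bound the other way: one needs a lower bound on $\frac{-\log\psi(q)}{\log q}$ in terms of the dyadic quantity to push $L'$ below $L$. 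Concretely, use $\psi(2^t) \ge \psi(q)$ (monotonicity, since $2^t \le q$) to get $-\log\psi(2^t) \le -\log\psi(q)$, and $\log 2^{t} > \log q - \log 2$, whence $\frac{-\log\psi(2^t)}{\log 2^t} \le \frac{-\log\psi(q)}{\log q - \log 2}$; then take $\liminf$ over $q$ and observe the $-\log 2$ shift is negligible, giving $L' \le L$. The case $\psi$ non-decreasing is handled identically using $\psi(2^{t+1}) \ge \psi(q)$ and $\log 2^{t+1} < \log q + \log 2$.

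One should also dispose of a boundary technicality: if $\psi(q) \to 0$ slowly enough that $-\log\psi(q)$ could behave erratically, the argument above still goes through because we only ever compare values at comparable arguments, and the logarithmic corrections ($\log 2$ additive, $(t+1)/t$ multiplicative) vanish in the limit; no assumption that the $\liminf$ is finite or positive is needed, since the inequalities hold in $[-\infty,+\infty]$. I do not anticipate a genuine obstacle here — the only mild care is getting the direction of the monotonicity inequality to match the direction of the logarithmic error term, so that both corrections work in one's favour when passing to the $\liminf$. This is exactly the kind of routine ``dyadic interpolation'' step, and the real work of the section lies in then feeding the conclusion (with $\psi$ replaced by $1/\psi$, so that the $\liminf$ of $\frac{\log(1/\psi(q))}{\log q}$ is precisely the lower order $\lambda$) into Theorem \ref{Inhomogeneous Khintchine-Groshev Hausdorff analogue theorem} to recover Levesley's dimension formula.
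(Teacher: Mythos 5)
Your argument is correct and is essentially the paper's proof: the subsequence inequality gives one direction for free, and the reverse follows from the dyadic bracketing $2^{t}\le q<2^{t+1}$, monotonicity of $\psi$, and the observation that the $\log 2$ discrepancy between $\log q$ and $\log 2^{t}$ is negligible in the $\liminf$. The one point where the paper is more careful than your parenthetical claim that ``no positivity is needed'' is the step $\frac{-\log\psi(2^{t})}{\log 2^{t}}\le\frac{-\log\psi(q)}{\log q-\log 2}$, which uses that the numerators are nonnegative (a quotient only increases upon shrinking the denominator when the numerator is $\ge 0$); the paper therefore first disposes of the case $\psi\ge 1$ separately, noting that for non-increasing $\psi$ monotone convergence forces both $\liminf$s to equal $0$, and then assumes $\psi(q)<1$ for all large $q$.
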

\begin{proof}
Assume first that $\psi$ is non-increasing. Note that $(2^{t})_{t=1}^{\infty}$ is a subsequence of $(q)_{n=1}^{\infty}$ and so trivially, 
\[
\liminf_{q\to\infty}\frac{-\log (\psi(q))}{\log q}\leq\liminf_{t\to\infty}\frac{-\log\left(\psi(2^{t})\right)}{\log 2^{t}}.
\]
It remains to prove the reverse inequality. Suppose for now that $\psi(q) \geq 1$ for all $q \in \N$. In this case, since $\psi(q) \to c$ for some $c \geq 1$ by monotone convergence,
\[ \liminf_{q \to \infty}{\frac{-\log(\psi(q))}{\log{q}}} = 0 = \liminf_{t \to \infty}{\frac{-\log(\psi(2^t))}{\log{2^t}}}.\]
Thus, we may assume that $\psi(q) < 1$ for all sufficiently large $q$. Given $q\in\N$, set $t_{q}$ to be the unique integer satisfying $2^{t_{q}}\leq q<2^{t_{q}+1}$.
Then $\psi(2^{t_{q}})\geq \psi(q)$ and $\log(\psi(2^{t_{q}}))\geq \log(\psi(q))$. Since further $q<2^{t_{q}+1}$ and so $\log q<\log 2^{t_{q}+1}$, we obtain
\begin{align*}
\liminf_{q\to\infty}\frac{-\log (\psi(q))}{\log q}&\geq\liminf_{q\to\infty}\frac{-\log\left(\psi(2^{t_{q}})\right)}{\log 2^{t_{q}+1}}\\
&=\liminf_{q\to\infty}\frac{-\log\left(\psi(2^{t_{q}})\right)}{\log 2^{t_{q}}+\log2}\\
&=\liminf_{t\to\infty}\frac{-\log\left(\psi(2^{t})\right)}{\log 2^{t}},
\end{align*}
as required.

For non-decreasing $\psi$ we can similarly set $t_{q}$ to satisfy $2^{t_{q}-1}\leq q<2^{t_{q}}$ and use the bound $\psi(2^{t_{q}})\geq \psi(q)$; details are left to the reader.
\end{proof}

\begin{proof}[Alternative Proof of Theorem \ref{Theorem L} using Theorem \ref{Inhomogeneous Khintchine-Groshev Hausdorff analogue theorem}]
To avoid confusion throughout the proof, for approximating functions $\psi: \N \to \R^+$ we will write $\lambda_{\psi}$ to denote the lower order at infinity of $1/\psi$. However, when there is no ambiguity we will just write $\lambda$ and omit the additional subscript.

We observe that, since $\psi$ is assumed to be monotonically decreasing, we must have $\lambda_{\psi} \geq 0$. To see this, suppose that $\lambda_{\psi} < 0$. Then, by the definition of the lower order at infinity, it follows that for any $\varepsilon > 0$ we must have $\psi(q) \geq q^{-(\lambda_{\psi} + \varepsilon)}$ for infinitely many values of $q$. In particular, this is true for every $0 < \varepsilon < |\lambda_{\psi}|$ and so we conclude that $\psi$ cannot be monotonically decreasing if $\lambda_{\psi} < 0$. 

We will now show that if the result stated in Theorem~\ref{Theorem L} is true for approximating functions with $\lambda = \frac{n}{m}$ then this implies the validity of the result for approximating functions with $0 \leq \lambda < \frac{n}{m}$. We will then establish the result for approximating functions with $\lambda \geq \frac{n}{m}$. 

For the time being, assume that the conclusion in Theorem~\ref{Theorem L} holds for any monotonically decreasing approximating function with $\lambda =\frac{n}{m}$ and let $\psi: \N \to \R^+$ be a monotonically decreasing approximating function such that $\lambda_{\psi} < \frac{n}{m}$. Consider the function $\Psi: \N \to \R^+$ defined by $\Psi(q) = \min\{\psi(q), q^{-\frac{n}{m}}\}$. Note that $\Psi$ is a monotonically decreasing function (since it is the minimum of two monotonically decreasing functions) and that $\Psi(q) \leq \psi(q)$ for all $q \in \N$ . In particular, we have $\dimh(\cA_{n,m}^{\y}(\Psi)) \leq \dimh(\cA_{n,m}^{\y}(\psi))$. 
Next, note that it follows from the fact that $\Psi(q) \leq q^{-\frac{n}{m}}$ for all $q \in \N$ that $\lambda_{\Psi} \geq \frac{n}{m}$. On the other hand, since $\lambda_{\psi} < \frac{n}{m}$ we know that $\psi(q) \geq q^{-\frac{n}{m}}$ for infinitely many values of $q$. In particular, this implies that we must have $\Psi(q) = q^{-\frac{n}{m}}$ infinitely often and, consequently, that $\lambda_{\Psi} \leq \frac{n}{m}$. Hence, $\lambda_{\Psi} =\frac{n}{m}$ and so, by our assumption, we see that
\[\dimh(\cA_{n,m}^{\y}(\psi)) \geq \dimh(\cA_{n,m}^{\y}(\Psi)) = n(m-1)+\frac{n+m}{\lambda_{\Psi}+1} = nm.\]
Combining this with the trivial upper bound we conclude that $\dimh(\cA_{n,m}^{\y}(\psi)) = nm$, as required.

It remains to be shown that $\dimh(\cA_{n,m}^{\y}(\psi)) = n(m-1)+\frac{n+m}{\lambda+1}$ for monotonically decreasing approximating functions $\psi: \N \to \R^+$ with $\lambda_{\psi} = \lambda \geq \frac{n}{m}$. To this end, suppose $\psi$ is such an approximating function.

Let $s_0 = m(n-1)+\frac{m+n}{\lambda + 1}$ and consider $f_{\delta}(r) = r^{s_0+\delta}$ where $-\frac{n+m}{\lambda + 1} < \delta < \frac{n+m}{\lambda + 1}$. We aim to show that  
\[\cH^{s_0+\delta}(\cA_{n,m}^{\y}(\psi)) = 
         \left\{
         \begin{array}{ll}
         0 & \text{if } \;\;\; \delta > 0\;
         ,\\[3ex]
         \cH^{s_0 + \delta}(\I^{nm}) & \text{if } \;\;\; \delta < 0,
         \end{array}\right.\]
from which the result would follow.

Note that $f_{\delta}(r)$ is a dimension function and $r^{-nm}f_{\delta}(r)$ is monotonic. Let $g_{\delta}(r) = r^{-m(n-1)}f_{\delta}(r) = r^{-m(n-1)+s_0+\delta}$. Since $\delta > -\frac{m+n}{\lambda + 1}$, and so $-m(n-1) + s_0 + \delta > 0$, the function $g_{\delta}(r)$ is a dimension function. Thus $f_{\delta}$ and $g_{\delta}$ satisfy the hypotheses of Theorem~\ref{Inhomogeneous Khintchine-Groshev Hausdorff analogue theorem}.

It follows from the definition of the lower order at infinity that, for any $\varepsilon>0$,
\[ \psi(q) \leq q^{-(\lambda - \varepsilon)} \text{ for all large enough } q\text{ and }\]
\begin{align}
\psi(q) \geq q^{-(\lambda + \varepsilon)} \text{ for infinitely many } q \in \N.\label{liminf bound relation 1}
\end{align}

Combining this with Lemma~\ref{liminf lemma}, we have
\begin{align} \label{liminf upper bound relation 2}
\psi(2^t) \leq 2^{-t(\lambda - \varepsilon)}
\end{align}
for large enough $t$
and, for infinitely many $t$,
\begin{align} \label{liminf lower bound relation 2}
\psi(2^t) \geq 2^{-t(\lambda + \varepsilon)}.
\end{align}

By Theorem \ref{Inhomogeneous Khintchine-Groshev Hausdorff analogue theorem} it follows that to determine $\cH^{f_{\delta}}(\cA_{n,m}^{\y}(\psi))$ we are interested in the behaviour of the sum 
\begin{align} \label{sum}
\sum_{q=1}^{\infty}{q^{n+m-1}g_{\delta}\left(\frac{\psi(q)}{q}\right)} = \sum_{q=1}^{\infty}{q^{n+m-1}\left(\frac{\psi(q)}{q}\right)^{-m(n-1)+s_0+\delta}}.
\end{align}

Observe that, by the conditions imposed on $\delta$, $-m(n-1) + s_0 + \delta > 0$ and also that, by (\ref{liminf bound relation 1}), we have $\psi(q) \leq q^{-(\lambda - \varepsilon)}$ for sufficiently large $q$. Thus, (\ref{sum}) will converge if 
\begin{equation}\label{eqn:upperboundsum}
\sum_{q=1}^{\infty}{q^{n+m-1}(q^{-(\lambda-\varepsilon)-1})^{-m(n-1)+s_0+\delta}} = \sum_{q=1}^{\infty}{q^{n+m-1+(\lambda+1-\varepsilon)(m(n-1)-s_0-\delta)}} < \infty.
\end{equation}

This will be the case if 
\[n+m-1+(\lambda+1-\varepsilon)(m(n-1)-s_0-\delta) < -1\]
which is true if and only if
\[\frac{n+m}{\lambda+1-\varepsilon}+m(n-1) < s_0 + \delta.\]
If $\delta > 0$ we can force the above to be true by taking $\varepsilon$ to be sufficiently small.
Thus we conclude that, for $\delta>0$, (\ref{sum}) converges and consequently $\Haus^{s_{0}+\delta}(\cA_{n,m}^{\mathbf{y}}(\psi))=0$.

Next we establish that (\ref{sum}) diverges when $-\frac{n+m}{\lambda+1}<\delta<0$. First we note, since $\psi$ is monotonically decreasing, that
\begin{align}
\sum_{q=1}^{\infty}{q^{n+m-1}\left(\frac{\psi(q)}{q}\right)^{-m(n-1) + s_0 + \delta}} 
                      &= \sum_{t=1}^{\infty}\;{\sum_{2^{t-1} \leq q < 2^t}{q^{n+m-1}\left(\frac{\psi(q)}{q}\right)^{-m(n-1) + s_0 + \delta}}} \nonumber\\
                      &\geq \sum_{t=1}^{\infty}\;{\sum_{2^{t-1} \leq q < 2^t}{(2^{t-1})^{n+m-1}\left(\frac{\psi(2^t)}{2^t}\right)^{-m(n-1) + s_0 + \delta}}} \nonumber\\
                      &= \sum_{t=1}^{\infty}{2^{t-1}(2^{t-1})^{n+m-1}\left(\frac{\psi(2^t)}{2^t}\right)^{-m(n-1) + s_0 + \delta}} \nonumber\\
                      &= \frac{1}{2^{m+n}}\sum_{t=1}^{\infty}{2^{t(n+m)}\left(\frac{\psi(2^t)}{2^t}\right)^{-m(n-1) + s_0 + \delta}}. \label{eqn:asumlowerbound} 
\end{align}

We proceed by showing that, when $\delta<0$, we have for infinitely many $t$ that 
\begin{align} \label{divergence condition}
2^{t(m+n)}\left(\frac{\psi(2^t)}{2^t}\right)^{-m(n-1)+s_0+\delta} \geq 1.
\end{align}

For any $\delta<0$ we can choose $\varepsilon>0$ small enough such that
\[\frac{m+n}{\lambda+1+\varepsilon} + m(n-1) \geq s_0 + \delta.\]
Note that such an $\varepsilon$ exists since we are assuming that $\delta$ is negative. Rearranging, this gives
\[m+n-(\lambda+\varepsilon+1)(-m(n-1)+s_0+\delta) \geq 0\]
and then, exponentiating,
\[2^{t(m+n)}\left(\frac{2^{-t(\lambda+\varepsilon)}}{2^t}\right)^{-m(n-1)+s_0+\delta} \geq 1.\]

Now, by (\ref{liminf lower bound relation 2}) we have $\psi(2^t) \geq 2^{-t(\lambda+\varepsilon)}$ infinitely often and so (\ref{divergence condition}) holds, thus proving the divergence of (\ref{eqn:asumlowerbound}) and hence also the divergence of (\ref{sum}).

Hence, we have shown that
\[\cH^{s_0+\delta}(\cA_{n,m}^{\y}(\psi)) = 
         \left\{
         \begin{array}{ll}
         0 & \text{if } \;\;\; \delta > 0\;
         ,\\[3ex]
         \cH^{s_0 + \delta}(\I^{nm}) & \text{if } \;\;\; \delta < 0.
         \end{array}\right.\]
If $s_0 \leq nm$ then $\cH^{s_0 + \delta}(\I^{nm}) = \infty$ whenever $\delta<0$ and so it would follow that $\dimh(\cA_{n,m}^{\y}(\psi)) = s_0$. We conclude the proof by noting that $s_0 \leq nm$ is equivalent to $\lambda\geq \frac{n}{m}$.
\end{proof}

In Theorem \ref{Theorem L} the approximating function $\psi$ is assumed to be monotonic. However, the main tool in our alternative proof of Theorem \ref{Theorem L} is Theorem \ref{Inhomogeneous Khintchine-Groshev Hausdorff analogue theorem} which requires no monotonicity assumptions on $\psi$ for $n \geq 3$. This leads immediately to the natural question of whether this monotonicity assumption is indeed necessary in Theorem~\ref{Theorem L}. 

Let us consider general (not necessarily monotonic) approximating functions $\psi: \N \to \R^+$ with $\lambda$, the lower order at infinity of $1/{\psi}$, satisfying $\lambda >{n}/{m}$. Assuming no monotonicity conditions on $\psi$ and applying similar arguments to those which we have employed here to re-prove Theorem \ref{Theorem L} we obtain the following  bounds on the Hausdorff dimension of $\cA_{n,m}^{\y}(\psi)$. Although, in the interest of brevity, we omit proof.

\begin{proposition}
Let $m \geq 1$ and $n \geq 3$ be integers. If $\psi: \N \to \R^+$ is any function and $\lambda$ is the lower order at infinity of $1/{\psi}$ then, for any $\y \in \I^m$, if $\lambda > n/{m}$ we have 
\[m(n-1) + \frac{m+n-1}{\lambda+1} \leq \dimh(\cA_{n,m}^{\y}(\psi)) \leq m(n-1) + \frac{m+n}{\lambda+1}.\]
\end{proposition}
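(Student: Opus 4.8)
The plan is to read off both inequalities directly from the Hausdorff measure inhomogeneous Khintchine--Groshev Theorem (Theorem~\ref{Inhomogeneous Khintchine-Groshev Hausdorff analogue theorem}), arguing exactly as in our alternative proof of Theorem~\ref{Theorem L}; the whole point of restricting to $n\geq 3$ is that Theorem~\ref{Inhomogeneous Khintchine-Groshev Hausdorff analogue theorem} then carries no monotonicity hypothesis on $\psi$, in either the convergence or the divergence case. Throughout, for a parameter $s>m(n-1)$ I take $f(r)=r^{s}$ and $g(r)=r^{-m(n-1)}f(r)=r^{\,s-m(n-1)}$; then $f$ and $g$ are dimension functions and $r^{-nm}f(r)=r^{\,s-nm}$ is monotonic, so the hypotheses of Theorem~\ref{Inhomogeneous Khintchine-Groshev Hausdorff analogue theorem} hold, and the convergence/divergence of $\Haus^{s}(\cA_{n,m}^{\y}(\psi))$ is governed by
\[
\sum_{q=1}^{\infty} q^{n+m-1}\, g\!\left(\frac{\psi(q)}{q}\right) \;=\; \sum_{q=1}^{\infty} q^{n+m-1}\left(\frac{\psi(q)}{q}\right)^{s-m(n-1)}.
\]

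For the upper bound I would fix $\delta>0$, set $s=m(n-1)+\frac{m+n}{\lambda+1}+\delta$, and use that, by definition of the lower order at infinity of $1/\psi$, for every $\varepsilon>0$ we have $\psi(q)\leq q^{-(\lambda-\varepsilon)}$ for all large $q$. Since $s-m(n-1)>0$, substituting this bound term-by-term into the displayed sum and choosing $\varepsilon$ small in terms of $\delta$ makes it converge, by exactly the computation around \eqref{eqn:upperboundsum}; hence $\Haus^{s}(\cA_{n,m}^{\y}(\psi))=0$, so $\dimh \cA_{n,m}^{\y}(\psi)\leq s$, and letting $\delta\to 0^{+}$ gives $\dimh \cA_{n,m}^{\y}(\psi)\leq m(n-1)+\frac{m+n}{\lambda+1}$. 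No monotonicity is used here at all — indeed the convergence half of Theorem~\ref{Inhomogeneous Khintchine-Groshev Hausdorff analogue theorem} is monotonicity-free in every case (Remark~\ref{convergence remark}).

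For the lower bound I would fix any $s$ with $m(n-1)<s<m(n-1)+\frac{m+n-1}{\lambda+1}$ (a non-empty range, since $\lambda>n/m\geq 3>0$). By definition of $\lambda$ there are infinitely many $q$ with $\psi(q)\geq q^{-(\lambda+\varepsilon)}$, and for each such $q$ the corresponding summand is at least $q^{\,n+m-1-(\lambda+1+\varepsilon)(s-m(n-1))}$. Because $s-m(n-1)<\frac{m+n-1}{\lambda+1}$, one can pick $\varepsilon>0$ small enough that this exponent is non-negative, so infinitely many summands are $\geq 1$ and the series diverges. Theorem~\ref{Inhomogeneous Khintchine-Groshev Hausdorff analogue theorem} (divergence case, available since $n\geq 3$) then gives $\Haus^{s}(\cA_{n,m}^{\y}(\psi))=\Haus^{s}(\I^{nm})$; and $\lambda>n/m$ forces $s<m(n-1)+\frac{m+n-1}{\lambda+1}<nm$, so $\Haus^{s}(\I^{nm})=\infty$ and $\dimh \cA_{n,m}^{\y}(\psi)\geq s$. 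Letting $s$ increase to $m(n-1)+\frac{m+n-1}{\lambda+1}$ finishes the proof.

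The main obstacle — and the reason the two estimates do not coincide — is exactly this divergence step. When $\psi$ is monotonic one can replace the weight $q^{n+m-1}$ of a single term by the aggregate weight $\asymp 2^{t(n+m)}$ of the whole dyadic block $2^{t-1}\leq q<2^{t}$, since $\psi(2^{t})\leq\psi(q)$ throughout that block; this ``block gain'' of one extra power of $q$ is precisely what promotes $\frac{m+n-1}{\lambda+1}$ to $\frac{m+n}{\lambda+1}$ in Theorem~\ref{Theorem L}. Without monotonicity one only controls $\psi$ at isolated integers, so no block argument is available and one is forced to argue term-by-term, losing that power of $q$; closing the resulting gap (deciding which endpoint, if either, is sharp for general $\psi$) would require genuinely new input beyond Theorem~\ref{Inhomogeneous Khintchine-Groshev Hausdorff analogue theorem}.
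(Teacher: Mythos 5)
Your argument is correct and is exactly the route the paper intends: the paper omits the proof of this proposition but states that it follows by "similar arguments" to the alternative proof of Theorem~\ref{Theorem L}, i.e.\ by feeding $f(r)=r^{s}$ into Theorem~\ref{Inhomogeneous Khintchine-Groshev Hausdorff analogue theorem} (monotonicity-free for $n\geq 3$) and testing the critical sum term-by-term, with the loss of the dyadic-block gain accounting precisely for the gap between $\frac{m+n-1}{\lambda+1}$ and $\frac{m+n}{\lambda+1}$. The only blemish is the throwaway claim "$\lambda>n/m\geq 3$", which is false for $m\geq 2$; all you need there is $\lambda>n/m>0$, so the argument is unaffected.
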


We see that the upper and lower bounds in the above do not coincide. Interestingly, it turns out that these bounds are the best possible if one does not assume monotonicity of $\psi$ --- as we will now show. To the best of our knowledge the following result has not been considered before.

\begin{theorem}\label{theo:droppingmonotonicity}
Let $m, n \geq 1$ be integers. Let $\alpha > n/m$ be arbitrary and let $s_0$ be such that 
\[m(n-1) + \frac{m+n-1}{\alpha+1} < s_0 < m(n-1) + \frac{m+n}{\alpha+1}.\]
There exists an approximating function $\psi: \N \to \R^+$ such that $\dimh(\cA_{n,m}^{\y}(\psi)) = s_0$ and $\lambda_{\psi}=\alpha$ (where $\lambda_{\psi}$ is the lower order at infinity of $1/{\psi}$).
\end{theorem}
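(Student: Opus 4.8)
The plan is to construct $\psi$ explicitly as a function that behaves like $q^{-\alpha}$ most of the time, but along a sparse sequence of integers jumps down dramatically, so that the convergence sum from Theorem~\ref{Khintchine-Groshev Hausdorff analogue theorem} detects exactly the exponent $s_0$. More precisely, fix $\alpha > n/m$ and pick $s_0$ strictly between $m(n-1)+\frac{m+n-1}{\alpha+1}$ and $m(n-1)+\frac{m+n}{\alpha+1}$. For a rapidly growing sequence $q_1 < q_2 < \dots$ (say $q_{k+1} = q_k^{q_k}$ or even faster), define $\psi(q) = q^{-\alpha}$ for $q \notin \{q_k\}$ and $\psi(q_k) = q_k^{-\beta_k}$ for a suitable sequence $\beta_k \to \infty$ chosen so that a single term $q_k^{n+m-1} g_\delta(\psi(q_k)/q_k)$ with $g_\delta(r) = r^{-m(n-1)+s_0+\delta}$ is comparable to $1$ precisely when $\delta$ crosses $0$; concretely one wants $q_k^{n+m-1-(\beta_k+1)(-m(n-1)+s_0)} \approx 1$, i.e. $\beta_k \approx \frac{n+m-1}{-m(n-1)+s_0} - 1$, which by the choice of $s_0$ is a constant $> \alpha$. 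So in fact one can take $\beta_k = \beta$ constant with $\alpha < \beta$; the sparseness of $\{q_k\}$ is what keeps $\lambda_\psi = \alpha$.

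The key steps, in order: (1) Verify $\lambda_\psi = \alpha$. Since $\psi(q) = q^{-\alpha}$ for all $q \notin \{q_k\}$, we have $\frac{-\log\psi(q)}{\log q} = \alpha$ along that set, giving $\liminf \le \alpha$; and since $\psi(q) \le q^{-\alpha}$ everywhere (as $\beta > \alpha$), we get $\frac{-\log\psi(q)}{\log q} \ge \alpha$ for all $q$, hence $\lambda_\psi \ge \alpha$. (2) Show the upper bound $\dimh(\cA_{n,m}^{\y}(\psi)) \le s_0$. For $f_\delta(r) = r^{s_0+\delta}$ with $\delta > 0$ small, split the sum in Theorem~\ref{Khintchine-Groshev Hausdorff analogue theorem} into $q \notin \{q_k\}$ and $q = q_k$; the first piece converges exactly as in the proof of Theorem~\ref{Theorem L} (using $\psi(q) = q^{-\alpha} \le q^{-(\lambda-\varepsilon)}$ and $\alpha > n/m$ together with $\delta > 0$), and the second piece $\sum_k q_k^{n+m-1}(q_k^{-\beta-1})^{-m(n-1)+s_0+\delta}$ converges because for $\delta > 0$ the exponent of $q_k$ is strictly negative (by the strict inequality $s_0 < m(n-1)+\frac{m+n}{\alpha+1}$, ensuring a gap to absorb the $\delta$ shift — here one picks $\beta$ so that at $\delta = 0$ the exponent is nonpositive, with strict decrease for $\delta > 0$) and $\{q_k\}$ grows super-exponentially so even a mildly summable tail converges; hence $\cH^{s_0+\delta} = 0$ and $\dimh \le s_0$. (3) Show the lower bound $\dimh(\cA_{n,m}^{\y}(\psi)) \ge s_0$. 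For $f_\delta(r) = r^{s_0+\delta}$ with $\delta < 0$, show the sum diverges: the single subsum over $q = q_k$ already gives $\sum_k q_k^{n+m-1-(\beta+1)(-m(n-1)+s_0+\delta)}$, and choosing $\beta$ exactly at the threshold makes the exponent of $q_k$ equal to $-(\beta+1)\delta > 0$, so every term is $\ge 1$ and the series diverges. Then Theorem~\ref{Khintchine-Groshev Hausdorff analogue theorem} gives $\cH^{s_0+\delta}(\cA_{n,m}^{\y}(\psi)) = \cH^{s_0+\delta}(\I^{nm}) = \infty$ whenever $s_0 + \delta < nm$, which holds for $\delta < 0$ since $s_0 < m(n-1)+\frac{m+n}{\alpha+1} \le nm$ (the last inequality being $\alpha \ge n/m$, which holds strictly here). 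Combining (2) and (3) gives $\dimh = s_0$.

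There is a technical wrinkle: Theorem~\ref{Khintchine-Groshev Hausdorff analogue theorem} as stated requires $nm > 1$; the case $m = n = 1$ is genuinely excluded by the Duffin--Schaeffer counterexample, but for $m=1, n \ge 2$ we need $n \ge 3$ to invoke the non-monotonic version, so the clean statement really lives in the range $nm > 1$ with $n \ne 2$ — one should either restrict accordingly or note that for $n = 1, 2$ the function $\psi$ constructed can be arranged to be monotonic after all (it cannot, since it dips along $\{q_k\}$; so the honest statement needs $n \ge 3$ when $m = 1$, and $nm > 1$ otherwise suffices because $\psi$ is only required monotonic for $n \le 2$ which forces $m \ge 2$... but even then $n=2$ needs monotonicity). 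The cleanest fix is to prove the theorem under the standing hypothesis that we are in a case where Theorem~\ref{Khintchine-Groshev Hausdorff analogue theorem} applies without monotonicity, i.e. $n \ge 3$ (or $n=1$, which forces the Gallagher-type result — but $n=1$ with $m \ge 2$ is fine via \cite{BDV limsup sets}? actually $n=1$ requires monotonicity in Theorem~\ref{IHKG theorem}).

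The main obstacle I anticipate is not the dimension computation — which is a routine splitting of the Khintchine--Groshev sum once $\psi$ is correctly designed — but rather pinning down the exact value $s_0$ rather than merely an interval: the threshold exponent $\beta$ must be chosen so that the $q_k$-subsum sits exactly at the boundary between convergence and divergence as $\delta$ passes through $0$, and one must check that the ``background'' part of the sum (over $q \notin \{q_k\}$) never interferes — i.e., it converges for all $\delta < \frac{m+n}{\alpha+1} - (s_0 - m(n-1))$, which is a strictly positive quantity by the right-hand strict inequality on $s_0$, so there is room. Verifying that $\lambda_\psi$ is genuinely $\alpha$ and not smaller (which would require $\psi$ to exceed $q^{-\alpha}$ infinitely often, contradicting $\psi \le q^{-\alpha}$) is immediate, so the delicate balance is entirely in calibrating $\beta$ and the growth rate of $q_k$ against the two strict inequalities defining the admissible window for $s_0$.
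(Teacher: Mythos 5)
Your construction has the roles of the two exponents reversed, and this breaks the argument in two places. First, the threshold exponent you compute for the sparse sequence, $\beta=\frac{n+m-1}{s_0-m(n-1)}-1$, is \emph{not} greater than $\alpha$: the hypothesis $s_0>m(n-1)+\frac{m+n-1}{\alpha+1}$ gives $(\alpha+1)(s_0-m(n-1))>n+m-1$, i.e.\ $\beta+1=\frac{n+m-1}{s_0-m(n-1)}<\alpha+1$. So with your calibration $\psi$ jumps \emph{up} at the $q_k$ rather than down, and then $\lambda_\psi=\min\{\alpha,\beta\}=\beta<\alpha$. Second, and independently, the ``background'' part of the sum cannot converge for small $\delta>0$: with $\psi(q)=q^{-\alpha}$ off the sparse set, $\sum_{q\notin\{q_k\}}q^{n+m-1-(\alpha+1)(s_0+\delta-m(n-1))}$ converges only when $s_0+\delta>m(n-1)+\frac{n+m}{\alpha+1}$, i.e.\ for $\delta$ \emph{greater} than the positive quantity $\frac{m+n}{\alpha+1}-(s_0-m(n-1))$ (you wrote this inequality the wrong way round). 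For all sufficiently small $\delta>0$ this piece diverges, so you cannot conclude $\cH^{s_0+\delta}(\cA_{n,m}^{\y}(\psi))=0$; worse, for $n\geq3$ the divergence part of Theorem \ref{Inhomogeneous Khintchine-Groshev Hausdorff analogue theorem}, applied to the restriction of $q\mapsto q^{-\alpha}$ to $q\notin\{q_k\}$, forces $\dimh(\cA_{n,m}^{\y}(\psi))\geq m(n-1)+\frac{m+n}{\alpha+1}>s_0$ no matter how sparse $\{q_k\}$ is. Your $\psi$ therefore has the wrong dimension, not merely an incomplete proof.

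The missing idea is to mirror the construction: take the \emph{generic} value of $\psi$ to be the smaller one, $\psi(q)=q^{-\beta}$ with $\beta=\frac{n+m}{s_0-m(n-1)}-1$, which exceeds $\alpha$ precisely because $s_0<m(n-1)+\frac{m+n}{\alpha+1}$. Then $\psi\geq q^{-\beta}$ everywhere, and the monotonic comparison function $q\mapsto q^{-\beta}$ together with Theorem \ref{Theorem L} already yields the lower bound $\dimh(\cA_{n,m}^{\y}(\psi))\geq m(n-1)+\frac{m+n}{\beta+1}=s_0$ for all $m,n\geq1$; no non-monotonic divergence statement is needed, which also disposes of your concerns about $n\leq2$. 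The exceptional set $J$ carries the \emph{larger} values $q^{-\alpha}$, which pin down $\lambda_\psi=\alpha$, and the left-hand strict inequality on $s_0$ is exactly what makes the exponent $n+m-1-(\alpha+1)(s_0-m(n-1))$ negative, so that a merely polynomially sparse $J$ suffices to make $\sum_{q\in J}q^{n+m-1-(\alpha+1)(s_0+\delta-m(n-1))}$ converge for all $\delta\geq0$. Combined with the convergence of the $q\notin J$ part (automatic for $\delta>0$ by the choice of $\beta$) and Remark \ref{convergence remark}, this gives the upper bound $\dimh\leq s_0$ with no monotonicity hypothesis.
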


\begin{proof}
Fix $s_0$ satisfying the inequality in the statement of the theorem. Then, let $J := \{a_{k}: k \in \N\}$, where  $a_{k}=\left\lceil k^{-\gamma}\right\rceil$, 
\[
\gamma:=\frac{2}{n+m-1-(\alpha+1)\left(\frac{n+m}{\beta+1}\right)}
\text{ \;\;\;and\;\;\; }\beta := \frac{n+m}{s_0-m(n-1)}-1.
\]
Note that $\gamma\in(-1,0)$.
Define $\psi: \N \to \R^+$ by 
\[\psi(q) 
       = \left\{
         \begin{array}{ll}
         q^{-\alpha} & \text{if } \;\;\; q \in J\;
         ,\\[3ex]
         q^{-\beta} & \text{if } \;\;\; q \notin J.
         \end{array}\right.\]
         
We show that $\psi$ is an approximating function which satisfies the desired properties of the theorem. 
First, note that
\[
m(n-1) + \frac{n+m}{\alpha + 1} > s_0,\]
which implies that 
\[\frac{n+m}{s_0-m(n-1)} - 1 > \alpha.\] 
In turn, this implies that $\beta > \alpha$ and so $\liminf_{q\to\infty} -\log(\psi(q))/\log(q)=\alpha$, giving $\lambda_{\psi}=\alpha$, as required.

Recall that if $\lambda_{\psi} = \alpha$ then for any $\varepsilon>0$
there exists some $N \in \N$  such that $\psi(q) \leq q^{-(\alpha - \varepsilon)}$ for all $q \geq N$, and $\psi(q) \geq q^{-(\alpha + \varepsilon)}$ for infinitely many $q \in \N$.

To establish that the Hausdorff dimension is $s_{0}$ we note that $\dimh(\cA_{n,m}^{\y}(\psi)) \geq \dimh(\cA_{n,m}^{\y}(q \mapsto q^{-\beta}))$ since $\psi(q) \geq q^{-\beta}$ for all $q$. Furthermore, since $q \mapsto q^{-\beta}$ is a monotonic function with $\lambda_{(q \mapsto q^{-\beta})} = \beta$, by Theorem \ref{Theorem L} we have 
\begin{align*}
\dimh(\cA_{n,m}^{\y}(q \mapsto q^{-\beta})) &= m(n-1) + \frac{m+n}{\beta+1}= s_0.
\end{align*}
Therefore, $\dimh(\cA_{n,m}^{\y}(\psi)) \geq s_0$ and it remains to show that $\dimh(\cA_{n,m}^{\y}(\psi)) \leq s_0$. 

As a consequence of Theorem \ref{Inhomogeneous Khintchine-Groshev Hausdorff analogue theorem} (and Remark \ref{convergence remark}), we only need to verify that for all $\delta > 0$ we have 
\[\sum_{q=1}^{\infty}{q^{n+m-1}\left(\frac{\psi(q)}{q}\right)^{-m(n-1)+s_0+\delta}} < \infty\]
since this would imply that $\cH^{s_0 + \delta}(\cA_{n,m}^{\y}(\psi)) = 0$ and $\dimh(\cA_{n,m}^{\y}(\psi)) \leq s_0 + \delta$.

We note that
\begin{align} \label{split sum}
&\sum_{q=1}^{\infty}{q^{n+m-1}\left(\frac{\psi(q)}{q}\right)^{-m(n-1)+s_0+\delta}} \nonumber \\
            =&\sum_{q \in J}{q^{n+m-1}(q^{-\alpha-1})^{-m(n-1)+s_0+\delta}} + \sum_{q \notin J}{q^{n+m-1}(q^{-\beta-1})^{-m(n-1)+s_0+\delta}} \nonumber \\
            =& \sum_{q \in J}{q^{n+m-1-(\alpha + 1)(s_0+\delta-m(n-1))}} + \sum_{q \notin J}{q^{n+m-1-(\beta + 1)(s_0+\delta-m(n-1))}}.
\end{align}

We consider each of the terms on the right-hand side of (\ref{split sum}) separately and show that each of them converges. We first consider the second sum on the right-hand side of (\ref{split sum}). Since $\delta>0$ we have $s_0 - m(n-1) < s_0+ \delta - m(n-1)$ and hence
\[n+m < \left(\frac{n+m}{s_0-m(n-1)}\right)(s_0+ \delta - m(n-1)).\]
Recalling that 
\[\beta = \frac{n+m}{s_0 - m(n-1)}-1\] 
it follows that 
\[n+m-1-(\beta+1)(s_0+ \delta - m(n-1)) < -1\] 
which is sufficient for the second sum on the right-hand side of (\ref{split sum}) to converge.

For the first sum on the right-hand side of (\ref{split sum}) we make the following observations. First of all notice that
\[n+m-1-(\alpha+1)\left(\frac{n+m}{\beta+1}\right) = n+m-1-(\alpha + 1)(s_0 - m(n-1)).\]
Also note that 
\[
\frac{n+m-1}{\alpha+1}+m(n-1) < s_0 \text{ \;\;\; gives \;\;\; }
n+m-1-(\alpha+1)(s_0-m(n-1)) < 0 .                   
\]
Thus, provided that $\delta$ is sufficiently small,
\begin{align}
\sum_{q \in J}{q^{n+m-1-(\alpha+1)(s_0+\delta-m(n-1))}} 
               &= \sum_{k=1}^{\infty}{a_k^{n+m-1-(\alpha+1)(s_0+\delta-m(n-1))}} \nonumber\\
               &= \sum_{k=1}^{\infty}{\left\lceil k^{-\gamma} \right\rceil^{n+m-1-(\alpha+1)(s_0+\delta-m(n-1))}} \nonumber\\
               &\leq \sum_{k=1}^{\infty}{\left( k^{-\gamma} \right)^{n+m-1-(\alpha+1)(s_0+\delta-m(n-1))}} \label{eqn:lastsumbound}
\end{align}
as $n+m-1-(\alpha+1)(s_0+\delta-m(n-1))<0$ and $\gamma<0$. 

Now, for $\delta > 0$, 
\begin{align*}
\frac{2}{\gamma} &= n+m-1-(\alpha+1)(s_0 - m(n-1)) \\
                 &> n+m-1-(\alpha+1)(s_0+\delta-m(n-1)).
\end{align*}
Hence,
\begin{align} \label{split sum convergence}
1 < \frac{n+m-1-(\alpha+1)(s_0 + \delta -m(n-1))}{n+m-1-(\alpha+1)(s_0 - m(n-1))}
\end{align}
and so (\ref{eqn:lastsumbound}) converges since $\left( k^{-\gamma} \right)^{n+m-1-(\alpha+1)(s_0+\delta-m(n-1))}< k^{-2}$. Consequently, since both the component sums converge, it follows that (\ref{split sum}) converges, \ie
\[\sum_{q=1}^{\infty}{q^{n+m-1}\left(\frac{\psi(q)}{q}\right)^{-m(n-1)+s_0+\delta}} < \infty,\]
and we conclude that $\dimh(\cA_{n,m}^{\y}(\psi)) \leq s_0 + \delta$. The desired result follows upon noticing that $\delta>0$ can be taken to be arbitrarily small.
\end{proof}

\subsection{General statements} \label{general mtp for linear forms section}

We conclude this section on the extension of the Mass Transference Principle to systems of linear forms by recording a couple of very general statements established in \cite{AB ref}. Let us consider now the situation where we have approximating functions $\Psi: \Z^n \setminus \{\0\} \to \R^+$ which can depend on $\q$ rather than just $|\q|$. Furthermore, suppose we are also given a fixed inhomogeneous parameter $\y \in \I^m$. We define $\cA_{n,m}^{\y}(\Psi)$ to be the set of $\x \in \I^{nm}$ such that 
\[|\q\x+\p-\y| < \Psi(\q)\]
for infinitely many $(\p,\q) \in \Z^m \times \Z^n \setminus \{\0\}$.

Considering the $\Psi$-approximable points we have the following statement.

\begin{theorem}[Allen -- Beresnevich \cite{AB ref}] \label{general statement: psi depending on q}
Let $\Psi: \Z^n \setminus \{\0\} \to \R^+$ be an approximating function and let $\y \in \I^m$ be fixed.
Let $f$ and $g: r \to g(r)=r^{-m(n-1)}f(r)$ be dimension functions such that $r^{-nm}f(r)$ is monotonic. Let $$
\Theta: \Z^n \setminus \{\0\} \to \R^+\qquad\text{be defined by}\qquad \Theta(\q) := |\vv q|\,g\left(\frac{\Psi(\q)}{|\q|}\right)^{\frac{1}{m}}\,.
$$
Then
$$
|\cA_{n,m}^{\y}(\Theta)| = 1\qquad\text{implies}\qquad \cH^f(\cA_{n,m}^{\y}(\Psi)) = \cH^f(\I^{nm}).
$$
\end{theorem}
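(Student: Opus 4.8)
The analytic substance of the statement is carried entirely by the Mass Transference Principle for systems of linear forms, Theorem~\ref{mtp for linear forms theorem}; what remains is to recast $\cA_{n,m}^{\y}(\Psi)$ and $\cA_{n,m}^{\y}(\Theta)$ as the two $\limsup$ sets $\La(\U)$ and $\La(g(\U)^{1/m})$ appearing there. First I would fix the geometry: regard $\R^{nm}$ as the space of real $n\times m$ matrices $\x=(x_{ij})$, and for $\q\in\Z^{n}\setminus\{\0\}$ and $\p\in\Z^{m}$ set $R_{\p,\q}:=\{\x\in\R^{nm}:\q\x+\p-\y=\0\}$. Since $\q\neq\0$ the linear map $\x\mapsto\q\x$ is onto $\R^{m}$, so $R_{\p,\q}$ is an affine subspace of $\R^{nm}$ of dimension $l:=nm-m=m(n-1)$ and codimension $m$. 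These are precisely the parameters in Theorem~\ref{mtp for linear forms theorem}: there $g(r)=r^{-l}f(r)$ and $r^{-k}f(r)$ is required to be monotonic, and here $k=nm$, $g(r)=r^{-m(n-1)}f(r)$ and $r^{-nm}f(r)$ is monotonic, as assumed.

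The one genuinely delicate point is the choice of norm $\|\cdot\|$ on $\R^{nm}$ defining the neighbourhoods $\Delta(R,\delta)$. With the Euclidean or supremum norm, $\dist(\x,R_{\p,\q})$ is merely comparable to $|\q\x+\p-\y|/|\q|$, and the stray constants would obstruct the argument when $f$ is not doubling. Instead I would take $\|\v\|:=\max_{1\le j\le m}\sum_{i=1}^{n}|v_{ij}|$ ($\ell^{1}$ down the columns of $\v$, $\ell^{\infty}$ across them). For any $\v$ with $\q\v=\w$ one has $|w_{j}|=|\sum_{i}q_{i}v_{ij}|\le|\q|\sum_{i}|v_{ij}|\le|\q|\,\|\v\|$, and equality is attained by the $\v$ supported on the row of $\q$ of largest modulus; hence
\[
\dist(\x,R_{\p,\q})=\frac{|\q\x+\p-\y|}{|\q|}\qquad(\x\in\R^{nm}),
\]
with $|\cdot|$ the supremum norm as in the definition of $\cA_{n,m}^{\y}$. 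This exact identity (no lost constants) is what makes the deduction work for arbitrary dimension functions.

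Now the translation is immediate. Enumerate $\cR=(R_{\p,\q})$ and let $\U$ be the sequence with $\U_{(\p,\q)}:=\Psi(\q)/|\q|$. One may assume $\Psi(\q)/|\q|\to 0$ (otherwise $\Psi(\q)>1/2$ for arbitrarily large $\q$, forcing $\cA_{n,m}^{\y}(\Psi)=\I^{nm}$ and making the claim trivial), and then, after discarding the planes $R_{\p,\q}$ that lie outside a fixed neighbourhood of $\I^{nm}$ — which play no role — the entries of $\U$ may be ordered so that $\U_{(\p,\q)}\to 0$. By the displayed identity $\x\in\Delta(R_{\p,\q},\U_{(\p,\q)})$ if and only if $|\q\x+\p-\y|<\Psi(\q)$, so $\La(\U)$ coincides with $\cA_{n,m}^{\y}(\Psi)$ on $\I^{nm}$ (and with its $\Z^{nm}$-periodisation elsewhere, the defining condition being unchanged under integer translations of $\x$). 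Crucially, by the very definition of $\Theta$ one has $g(\U_{(\p,\q)})^{1/m}=g(\Psi(\q)/|\q|)^{1/m}=\Theta(\q)/|\q|$, so the same identity gives $\Delta\big(R_{\p,\q},g(\U_{(\p,\q)})^{1/m}\big)=\{\x:|\q\x+\p-\y|<\Theta(\q)\}$, whence $\La(g(\U)^{1/m})$ coincides with $\cA_{n,m}^{\y}(\Theta)$.

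Finally I would apply Theorem~\ref{mtp for linear forms theorem}. The hypothesis $|\cA_{n,m}^{\y}(\Theta)|=1$, together with the integer-translation invariance just noted, shows that $\La(g(\U)^{1/m})$ has full Lebesgue (equivalently $\cH^{nm}$) measure in every ball $B$ — exactly the hypothesis needed. The theorem then yields $\cH^{f}(B\cap\La(\U))=\cH^{f}(B)$ for every ball $B$, and a routine covering argument (of the kind used for the analogous deductions elsewhere in this article), via $\La(\U)\cap\I^{nm}=\cA_{n,m}^{\y}(\Psi)$, upgrades this to $\cH^{f}(\cA_{n,m}^{\y}(\Psi))=\cH^{f}(\I^{nm})$. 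The main obstacle is thus the one indicated: arranging the geometric dictionary to be genuinely lossless, which is what the choice of norm achieves; the remaining steps (decay of the radii, restriction to the relevant planes, the periodicity and boundary bookkeeping) are routine, and all the real work sits inside Theorem~\ref{mtp for linear forms theorem}.
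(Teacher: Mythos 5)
Your proof is correct and follows exactly the route the paper indicates: the paper records this theorem as a citation to \cite{AB ref} and describes it as a ``fairly natural reformulation'' of Theorem \ref{mtp for linear forms theorem}, which is precisely the translation you carry out, including the genuinely necessary care in choosing the norm so that $\dist(\x,R_{\p,\q})=|\q\x+\p-\y|/|\q|$ holds exactly (no constants to lose when $f$ is not doubling). The peripheral reductions you flag (decay of the radii, discarding irrelevant planes, periodicity, and the ball-to-cube step via the standard dichotomy for $r^{-nm}f(r)$) are all handled correctly.
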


Supposing we are interested in the case where we have approximating functions $\psi: \N \to \R^+$ which depend only on $|\q|$ (\ie $\Psi(\q) = \psi(|\q|)$) we can extract the following statement as a corollary to Theorem \ref{general statement: psi depending on q}.

\begin{theorem}[Allen -- Beresnevich \cite{AB ref}] \label{general statement: psi depending on |q|}
Let $\psi: \N \to \R^+$ be an approximating function, let $\y \in \I^m$ be fixed and let $f$ and $g: r \to g(r) = r^{-m(n-1)}f(r)$ be dimension functions such that $r^{-nm}f(r)$ is monotonic. Let
$$
\theta: \N \to \R^+\qquad\text{be defined by}\qquad \theta(r):= r\,g\left(\frac{\psi(r)}{r}\right)^{\frac{1}{m}}\,.
$$
Then
$$
|\cA_{n,m}^{\y}(\theta)| = 1\qquad\text{implies}\qquad \cH^f(\cA_{n,m}^{\y}(\psi)) = \cH^f(\I^{nm}).
$$
\end{theorem}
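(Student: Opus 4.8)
The plan is to deduce Theorem \ref{general statement: psi depending on |q|} directly from Theorem \ref{general statement: psi depending on q} by specializing the $\q$-dependent approximating function to one that depends on $\q$ only through $|\q|$. First I would define $\Psi: \Z^n \setminus \{\0\} \to \R^+$ by $\Psi(\q) := \psi(|\q|)$; this is an approximating function in the sense required by Theorem \ref{general statement: psi depending on q}. Straight from the definitions of the sets involved, the condition ``$|\q\x+\p-\y| < \Psi(\q)$ for infinitely many $(\p,\q)$'' is identical to ``$|\q\x+\p-\y| < \psi(|\q|)$ for infinitely many $(\p,\q)$'', so $\cA_{n,m}^{\y}(\Psi) = \cA_{n,m}^{\y}(\psi)$.

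Next I would unwind the auxiliary function $\Theta$ that Theorem \ref{general statement: psi depending on q} attaches to $\Psi$. By its definition, $\Theta(\q) = |\q|\, g\!\left(\Psi(\q)/|\q|\right)^{1/m} = |\q|\, g\!\left(\psi(|\q|)/|\q|\right)^{1/m}$, which is precisely $\theta(|\q|)$ for the function $\theta$ in the statement. Hence the inequality defining $\cA_{n,m}^{\y}(\Theta)$ coincides with the one defining $\cA_{n,m}^{\y}(\theta)$, and so $\cA_{n,m}^{\y}(\Theta) = \cA_{n,m}^{\y}(\theta)$.

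Finally, the dimension functions $f$ and $g: r \mapsto r^{-m(n-1)}f(r)$ together with the monotonicity of $r^{-nm}f(r)$ are exactly the hypotheses of Theorem \ref{general statement: psi depending on q}, which therefore applies and gives: $|\cA_{n,m}^{\y}(\Theta)| = 1$ implies $\cH^f(\cA_{n,m}^{\y}(\Psi)) = \cH^f(\I^{nm})$. Substituting the two identifications established above turns this into: $|\cA_{n,m}^{\y}(\theta)| = 1$ implies $\cH^f(\cA_{n,m}^{\y}(\psi)) = \cH^f(\I^{nm})$, which is the claim.

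I expect no genuine obstacle here: all of the analytic weight sits in Theorem \ref{general statement: psi depending on q} (and beneath it in the mass transference principle for systems of linear forms, Theorem \ref{mtp for linear forms theorem}), and the present statement is a routine specialization. The only points needing a line of care are checking that $\Psi(\q) := \psi(|\q|)$ still counts as an admissible approximating function for Theorem \ref{general statement: psi depending on q}, and verifying the identity $\Theta(\q) = \theta(|\q|)$ so that the relevant $\limsup$ sets are literally equal rather than merely of equal measure.
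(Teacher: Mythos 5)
Your proposal is correct and is exactly the route the paper takes: the survey explicitly states that Theorem \ref{general statement: psi depending on |q|} is extracted as a corollary of Theorem \ref{general statement: psi depending on q} by specializing to $\Psi(\q) = \psi(|\q|)$, which is precisely your argument. The identifications $\cA_{n,m}^{\y}(\Psi) = \cA_{n,m}^{\y}(\psi)$ and $\Theta(\q) = \theta(|\q|)$ are verified correctly, so nothing further is needed.
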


It is observed in \cite{AB ref} that Theorems \ref{Khintchine-Groshev Hausdorff analogue theorem} and \ref{Inhomogeneous Khintchine-Groshev Hausdorff analogue theorem} follow as corollaries from Theorem \ref{general statement: psi depending on |q|}.
In fact, in some sense, Theorems \ref{general statement: psi depending on q} and \ref{general statement: psi depending on |q|} are fairly natural reformulations of Theorem \ref{mtp for linear forms theorem} in terms of, respectively, $\Psi$ and $\psi$-approximable points. 
In essentially the same way that Theorem \ref{mtp for linear forms theorem} may be used to prove Theorem \ref{general statement: psi depending on q} a more general statement can also be obtained.
Namely, suppose we are now given a function $\Psi: \Z^m \times \Z^n \setminus \{\0\} \to \R^+$ which can depend upon both $\p$ and $\q$. Furthermore, suppose we are also given fixed $\Phi \in \I^{mm}$ and $\y \in \I^m$. We denote by $\cM_{n,m}^{\y, \Phi}(\Psi)$ the set of $\x \in \I^{nm}$ for which
\[|\q\x + \p\Phi - \y| < \Psi(\p,\q)\]
holds for $(\p,\q) \in \Z^m \times \Z^n \setminus \{\0\}$ with $|\q|$ arbitrarily large. 

The following statement, which actually includes Theorems \ref{general statement: psi depending on q} and \ref{general statement: psi depending on |q|}, can be made.

\begin{theorem}[Allen -- Beresnevich \cite{AB ref}] \label{general statement: psi depending on (p,q)}
Let $\Psi: \Z^m \times \Z^n \setminus \{\0\} \to \R^+$ be such that
\begin{align} \label{general theorem monotonicity condition}
\lim_{|\q| \to \infty}~\sup_{\p\in\Z^m}{\frac{\Psi(\p,\q)}{|\q|}} = 0\,,
\end{align}
and let $\y \in \I^m$ and $\Phi \in \I^{mm} \setminus \{\0\}$ be fixed. Let $f$ and $g: r \to g(r)=r^{-m(n-1)}f(r)$ be dimension functions such that $r^{-nm}f(r)$ is monotonic. Let
$$
\Theta: \Z^m \times \Z^n \setminus \{\0\} \to \R^+\qquad\text{be defined by}\qquad \Theta(\p,\q) = |\q|\,g\left(\frac{\Psi(\p,\q)}{|\q|}\right)^{\frac{1}{m}}.
$$
Then
$$
|\cM_{n,m}^{\y, \Phi}(\Theta)| = 1\qquad\text{implies}\qquad\cH^f(\cM_{n,m}^{\y,\Phi}(\Psi)) = \cH^f(\I^{nm})\,.
$$
\end{theorem}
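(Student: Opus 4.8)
The plan is to recognise both $\cM_{n,m}^{\y,\Phi}(\Psi)$ and $\cM_{n,m}^{\y,\Phi}(\Theta)$ as $\limsup$ sets of neighbourhoods of a single family of affine planes in $\R^{nm}$, and then to quote the mass transference principle for systems of linear forms, Theorem~\ref{mtp for linear forms theorem}. This is exactly the route by which Theorem~\ref{general statement: psi depending on q} is obtained from Theorem~\ref{mtp for linear forms theorem} in \cite{AB ref}; the only new feature here is the shift $\p\Phi$, which merely translates each plane and plays no structural role. Throughout I set $k = nm$ and $l = m(n-1)$, so that $k = m + l$ and the planes below have dimension $l$ and codimension $m$, matching the notation of Theorem~\ref{mtp for linear forms theorem}.

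For each pair $(\p,\q) \in \Z^m \times (\Z^n \setminus \{\0\})$ define the affine plane $R_{(\p,\q)} := \{\x \in \R^{nm} : \q\x + \p\Phi - \y = \0\}$. Viewing $\x$ as an $n \times m$ matrix, the linear map $\x \mapsto \q\x$ splits over the $m$ columns and, since $\q \neq \0$, is surjective onto $\R^m$; hence $R_{(\p,\q)}$ has dimension $nm - m = l$. Equipping $\R^{nm}$ with the norm $\|\x\| := \max_{1 \le j \le m} \sum_{i=1}^{n} |x_{ij}|$ — whose restriction to each column is the $\ell^1$-norm, dual to the supremum norm appearing in the Diophantine condition — one computes $\dist(\x, R_{(\p,\q)}) = |\q\x + \p\Phi - \y|/|\q|$ \emph{exactly}, so that the slab $\{\x : |\q\x + \p\Phi - \y| < \Psi(\p,\q)\}$ is precisely $\Delta\!\big(R_{(\p,\q)}, \Psi(\p,\q)/|\q|\big)$, and likewise the slab for $\Theta$ is $\Delta\!\big(R_{(\p,\q)}, \Theta(\p,\q)/|\q|\big)$ with $\Theta(\p,\q)/|\q| = g(\Psi(\p,\q)/|\q|)^{1/m}$. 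Enumerating the relevant pairs as a sequence $(R_i)_{i \in \N}$ with $\U_i := \Psi(\p_i,\q_i)/|\q_i|$, condition~\eqref{general theorem monotonicity condition} guarantees $\U_i \to 0$, and one obtains
\[
\cM_{n,m}^{\y,\Phi}(\Psi) = \La(\U), \qquad \cM_{n,m}^{\y,\Phi}(\Theta) = \La\!\big(g(\U)^{1/m}\big),
\]
up to a set which is a countable union of proper affine subspaces of $\R^{nm}$ (arising from those $\q$ for which $\{\p\Phi : \p \in \Z^m\}$ accumulates; recall $\Phi$ is an $m \times m$ matrix, so this subgroup is never dense in $\R^m$) and hence negligible both for Lebesgue measure and, after a short argument, for $\cH^f$.

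With this dictionary in place the proof concludes quickly. The functions $f$ and $g : r \mapsto r^{-l}f(r) = r^{-m(n-1)}f(r)$ are dimension functions and $r^{-k}f(r) = r^{-nm}f(r)$ is monotonic, by hypothesis, so the standing assumptions of Theorem~\ref{mtp for linear forms theorem} are met. The hypothesis $|\cM_{n,m}^{\y,\Phi}(\Theta)| = 1$ says exactly that $\cH^{nm}\big(\I^{nm} \cap \La(g(\U)^{1/m})\big) = \cH^{nm}(\I^{nm})$; by a zero--one law for $\limsup$ sets of this type this upgrades to $\cH^{nm}\big(B \cap \La(g(\U)^{1/m})\big) = \cH^{nm}(B)$ for every ball $B$ contained in a fixed ball $\Om \supseteq \I^{nm}$. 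Theorem~\ref{mtp for linear forms theorem} then yields $\cH^f(B \cap \La(\U)) = \cH^f(B)$ for every such $B$, and a routine covering/exhaustion of $\I^{nm}$ by balls $B \subseteq \Om$ gives $\cH^f(\La(\U) \cap \I^{nm}) = \cH^f(\I^{nm})$, that is, $\cH^f(\cM_{n,m}^{\y,\Phi}(\Psi)) = \cH^f(\I^{nm})$, as required.

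The genuinely delicate points are two. First, the bookkeeping needed to present $\cM_{n,m}^{\y,\Phi}(\Psi)$ as a $\limsup$ set over a sequence with radii tending to zero: one must use \eqref{general theorem monotonicity condition} together with the geometry of the planes to see that only a controlled collection of the $R_{(\p,\q)}$ is ``active'' near any bounded region, and to discard the exceptional union of lower-dimensional subspaces without affecting the $\cH^f$-statement. Second — and this I expect to be the main obstacle — justifying the passage from the single global identity $|\cM_{n,m}^{\y,\Phi}(\Theta)| = 1$ to the local hypothesis ``$\cH^{nm}(B \cap \La(g(\U)^{1/m})) = \cH^{nm}(B)$ for every ball $B$'' demanded by Theorem~\ref{mtp for linear forms theorem}; this is precisely where a zero--one law in the inhomogeneous linear-forms setting is required, and establishing or invoking it in the requisite generality is the crux of the argument. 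Once these are dealt with, the inhomogeneous parameters $\y$ and $\Phi$ contribute nothing beyond translating the planes, and the argument runs exactly as in the derivation of Theorem~\ref{general statement: psi depending on q}.
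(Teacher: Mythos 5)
Your overall strategy --- realise both $\cM_{n,m}^{\y,\Phi}(\Psi)$ and $\cM_{n,m}^{\y,\Phi}(\Theta)$ as $\limsup$ sets of neighbourhoods of the planes $R_{(\p,\q)}=\{\x:\q\x+\p\Phi-\y=\0\}$ and invoke Theorem \ref{mtp for linear forms theorem} --- is exactly the route the paper indicates (the survey does not reprove the result; it quotes it from \cite{AB ref} with precisely this derivation in mind), and your computation of $\dist(\x,R_{(\p,\q)})=|\q\x+\p\Phi-\y|/|\q|$ in a suitably chosen norm is correct. However, of the two ``delicate points'' you flag, you have the emphasis reversed. The second one, which you call the crux, is not an obstacle and needs no zero--one law: since $|\cM_{n,m}^{\y,\Phi}(\Theta)\cap\I^{nm}|=|\I^{nm}|$, every ball $B$ \emph{contained in} $\I^{nm}$ automatically satisfies $|B\cap\cM_{n,m}^{\y,\Phi}(\Theta)|=|B|$. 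So take $\Om$ to be a ball inside $\I^{nm}$ rather than one containing it; the conclusion $\cH^f(\cM_{n,m}^{\y,\Phi}(\Psi))=\cH^f(\I^{nm})$ then follows from the usual dichotomy on the monotonic quantity $r^{-nm}f(r)$: if it stays bounded then $\cH^f$ is comparable to Lebesgue measure and the claim is essentially the Lebesgue statement, while if it tends to infinity then $\cH^f(B)=\infty$ for every ball with nonempty interior and it suffices to produce one ball with $\cH^f(B\cap\La(\U))=\infty$.

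The genuine gap is your first point, which you dispose of by asserting that $\cM_{n,m}^{\y,\Phi}(\Psi)$ and $\La(\U)$ differ by ``a countable union of proper affine subspaces''. Theorem \ref{mtp for linear forms theorem} requires a \emph{sequence} of planes with $\U_i\to0$, whereas for each fixed $\q$ there are infinitely many $\p$, all sharing the same value of $|\q|$ and hence receiving no decay from \eqref{general theorem monotonicity condition}; a blanket enumeration of all pairs therefore does not satisfy $\U_i\to0$, and its $\limsup$ can strictly contain $\cM_{n,m}^{\y,\Phi}(\Theta)$, whose definition insists on $|\q|$ being arbitrarily large. One must prune, for each $\q$, to the pairs whose slabs meet $\Om$; this set of $\p$ is finite when $\Phi$ is nonsingular (then $|\p\Phi|\lesssim|\q|$ forces $|\p|\lesssim|\q|$), but the theorem assumes only $\Phi\neq\0$, and for singular $\Phi$ infinitely many $\p$ can be relevant to a single $\q$ (the plane $R_{(\p,\q)}$ depends on $\p$ only through $\p\Phi$, so the planes repeat or accumulate). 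What is missing is an explicit argument that the pruned subfamily (i) has $\U_i\to0$, (ii) has $\limsup$ of the $g(\U)^{1/m}$-neighbourhoods containing $\cM_{n,m}^{\y,\Phi}(\Theta)\cap\Om$, so that the full-measure hypothesis survives, and (iii) has $\limsup$ of the $\U$-neighbourhoods contained in $\cM_{n,m}^{\y,\Phi}(\Psi)$, so that the conclusion transfers back. As written, this bookkeeping is asserted rather than proved.
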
 

The above theorem not only allows us to consider the usual homogeneous and inhomogeneous settings of Diophantine approximation for systems of linear forms (see \cite{BBDV ref}) but also allows us to consider Hausdorff measure statements where we may have some restrictions on our ``approximating points'' $(\p,\q)$. As an example, recently Dani, Laurent and Nogueira have established Lebesgue measure ``Khintchine--Groshev'' type statements for sets of $\psi$-approximable points where they have imposed certain primitivity conditions on their ``approximating points'' \cite{DLN ref}. In \cite{AB ref}, Theorem \ref{general statement: psi depending on (p,q)} has been used to establish Hausdorff measure versions of these results.

\section{Extension to rectangles} \label{rectangles section}

Another very natural situation, not covered by the setting of systems of linear forms, for which we might hope for some kind of mass transference principle is when our $\limsup$ sets of interest are defined by sequences of rectangles. For example, this is of interest when we consider weighted simultaneous approximation. Recently some progress has been made in this direction by Wang, Wu and Xu~\cite{WWX ref}.

\subsection{A mass transference principle from balls to rectangles}

Throughout this section let $k \in \N$ and, as usual, denote by $\I^k$ the unit cube $[0,1]^k$ in $\R^k$. Given a ball $B = B(\x,r)$ in $\R^k$ of radius $r$ centred at $\x$ and a $k$-dimensional real vector $\veca = (a_1, a_2, \dots, a_k)$ we will denote by $B^{\veca}$ the rectangle with centre $\x$ and side-lengths $(r^{a_1}, r^{a_2}, \dots, r^{a_k})$. Given a sequence $(\x_n)_{n \in \N}$ of points in $\I^k$ and a sequence $(r_n)_{n \in \N}$ of positive real numbers such that $r_n \to 0$ as $n \to \infty$ we define
\[W_{0} = \{\x \in \I^k: \x \in B_n = B(\x_n,r_n) \text{ for infinitely many } n \in \N\}.\] 
For any $\veca \in \R^k$ we will also write
\[W_{\veca} = \{\x \in \I^k: \x \in B^{\veca}_n \text{ for infinitely many } n \in \N\}.\]
 In \cite{WWX ref}, Wang, Wu and Xu established the following mass transference principle.

\begin{theorem}[Wang -- Wu -- Xu \cite{WWX ref}] \label{WWX Theorem 1.2}
Let $(\x_n)_{n \in \N}$ be a sequence of points in $\I^k$ and $(r_n)_{n \in \N}$ be a sequence of positive real numbers such that $r_n \to 0$ as $n \to \infty$. Let $\veca = (a_1, a_2, \dots, a_k)\in\R^{k}$ be such that $1 \leq a_1 \leq a_2 \leq \dots \leq a_k$. Suppose that $|W_0| = 1$. Then,
\[\dimh W_{\veca} \geq \min_{1 \leq j \leq k}\left\{\frac{k + ja_j - \sum_{i=1}^{j}{a_i}}{a_j}\right\}.\]
\end{theorem}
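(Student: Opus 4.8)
The plan is to deduce Theorem~\ref{WWX Theorem 1.2} from the classical Mass Transference Principle (Theorem~\ref{mtp theorem}) applied in each coordinate-grouping regime, constructing for each $j$ a suitable ``shrunken ball'' family whose $\limsup$ set is contained in (a slight fattening of) $W_{\veca}$. Fix $j \in \{1,\dots,k\}$ and write $s_j = \big(k + ja_j - \sum_{i=1}^{j} a_i\big)/a_j$; we aim to show $\dimh W_{\veca} \ge s_j$, and then take the maximum over $j$ --- wait, the minimum --- so in fact we must show $\dimh W_{\veca} \ge \min_j s_j$ directly, which amounts to showing that for \emph{every} $s < \min_j s_j$ one has $\cH^s(W_{\veca}) = \infty$ (or at least positive). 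Equivalently, fixing an arbitrary $s < \min_j s_j$, I need a single ball-to-ball mass transfer that certifies $\cH^s(W_{\veca} \cap \Omega) = \cH^s(\Omega)$ for any ball $\Omega \subseteq \I^k$.

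The key construction is as follows. For each $n$, inside the ball $B_n = B(\x_n, r_n)$ we cannot simply shrink to a ball and expect to land in the rectangle $B_n^{\veca}$, because the rectangle is long in some directions and short in others. Instead, the rectangle $B_n^{\veca}$ of side-lengths $(r_n^{a_1},\dots,r_n^{a_k})$ can be tiled by a grid of \emph{small cubes} of common side-length $r_n^{a_k}$ (the shortest side); the number of such cubes is $\prod_{i=1}^{k} r_n^{a_i - a_k} = r_n^{\sum_i a_i - k a_k}$. Within $B_n$, consider the sub-balls $B(y, r_n^{a_j})$ of radius $r_n^{a_j}$ for centres $y$ ranging over a maximal $r_n^{a_j}$-separated subset of $B_n$; there are $\asymp r_n^{k(1-a_j)}$ of them and they (essentially) cover $B_n$. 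Now replace each such sub-ball $B(y,r_n^{a_j})$ by the ball $\tilde B(y) := B(y, r_n^{a_j \cdot (s/s_j')})$ for an appropriate auxiliary exponent; the point is to choose the radius of the shrunk balls so that (a) the shrunk balls, when we apply Theorem~\ref{mtp theorem} with $f(r) = r^s$, reproduce full Lebesgue measure from the hypothesis $|W_0| = 1$, and (b) the original (un-shrunk) balls $B(y, r_n^{a_j})$ all lie inside a bounded dilate of the rectangle $B_n^{\veca}$ for the \emph{right} value of $j$ — namely the $j$ achieving the minimum — so that their $\limsup$ set is contained in $W_{\veca}$ up to a null set. This is exactly the ``balls inside rectangles'' packing that Wang--Wu--Xu exploit: the slab geometry forces the balls of radius $r_n^{a_j}$ to fit in the rectangle precisely when $a_j$ is large enough relative to the first $j$ coordinates, and the counting exponent $k + j a_j - \sum_{i=1}^j a_i$ records how many such balls the rectangle holds.

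Concretely, I would: (1) reduce to showing $\cH^{s}(W_\veca) > 0$ for each $s < \min_j s_j$; (2) for the minimising index $j$, define the new ball sequence $\{B(y_{n,l}, \rho_{n,l})\}$ where $y_{n,l}$ runs over an $r_n^{a_j}$-net of $B_n$ and $\rho_{n,l} = r_n^{a_j}$, and verify $\bigcup_l B(y_{n,l}, C r_n^{a_j}) \subseteq B_n^{\veca}$-dilate using $a_i \le a_j$ for $i \le j$ and a separate truncation argument for $i > j$ (here one keeps only those net points whose $r_n^{a_j}$-neighbourhood stays within the short sides — this is where the coordinates $i>j$ enter and why we take a \emph{min} rather than getting all $s_j$); (3) check that the ``$f$-shrunk'' balls $B(y_{n,l}, (r_n^{a_j})^{s/?})$ have a $\limsup$ set of full measure, which follows from $|W_0|=1$ together with a Lebesgue density / counting computation showing the shrunk balls still cover a definite proportion of each $B_n$; (4) invoke Theorem~\ref{mtp theorem} to conclude $\cH^s$ of the un-shrunk $\limsup$ set is full in every ball, hence infinite; (5) let $s \uparrow \min_j s_j$.

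\textbf{Main obstacle.} The delicate point is step (2)–(3): choosing the shrinking exponent so that simultaneously the Lebesgue hypothesis transfers (requires the shrunk radii to not be \emph{too} small relative to the covering count $r_n^{k(1-a_j)}$ of sub-balls per $B_n$) and the target dimension comes out to exactly $s_j$ for the minimising $j$ (requires a precise bookkeeping of the exponent $k + j a_j - \sum_{i\le j} a_i$ against the number of sub-balls that actually fit inside the rectangle in the short directions). In other words, the real work is the combinatorial geometry of how many $r_n^{a_j}$-balls pack inside an $(r_n^{a_1},\dots,r_n^{a_k})$-box and making that count match the Hausdorff-measure accounting in Theorem~\ref{mtp theorem}; the application of the Mass Transference Principle itself is then essentially formal.
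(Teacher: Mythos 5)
There is a genuine gap here, and it is not just the deferred bookkeeping you flag at the end; the reduction to the ball-to-ball Mass Transference Principle does not work as described. The central geometric step (2) fails: since $a_i \geq a_j$ for $i > j$ and $r_n < 1$, the rectangle $B_n^{\veca}$ has side-length $r_n^{a_i} \leq r_n^{a_j}$ in every direction $i > j$, so \emph{no} ball of radius $r_n^{a_j}$ with $j < k$ fits inside $B_n^{\veca}$ or any bounded dilate of it --- the set of net points ``whose $r_n^{a_j}$-neighbourhood stays within the short sides'' is empty, and fattening the rectangle by $r_n^{a_j}$ in the short directions produces a set whose $\limsup$ is much larger than $W_{\veca}$. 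The only admissible radius for a packing-by-balls argument is $r_n^{a_k}$, and that route is genuinely lossy: the balls of radius $r_n^{a_k}$ are clustered inside the rectangle rather than spread over $B_n$, so the hypothesis of Theorem~\ref{mtp theorem} (full Lebesgue measure of the $\limsup$ of the $f$-enlarged balls) does not follow from $|W_0|=1$ unless the enlarged radius is comparable to $r_n$, which forces $s \leq k/a_k$; already for $k=2$, $\veca=(1,2)$ this gives $1$ while the theorem asserts $3/2$. Relatedly, your framing of the minimum is off: one does not pick the minimising $j$ and run a single construction for it; the minimum emerges as the worst case over the scales $r_n^{a_k} \leq \rho \leq r_n^{a_1}$ of a test ball.

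For comparison, the actual proof of Wang, Wu and Xu is not an application of Theorem~\ref{mtp theorem} at all. They build a Cantor-type subset of $W_{\veca}$ directly: the full-measure hypothesis $|W_0|=1$ is used, via a $5r$-covering argument (in the spirit of the KGB Lemma), to select at each stage a well-separated family of balls $B_n$ occupying a definite proportion of each piece of the previous level; inside each selected $B_n$ they tile $B_n^{\veca}$ by roughly $\prod_{i=1}^{k} r_n^{a_i-a_k}$ cubes of side $r_n^{a_k}$ and keep a separated subfamily; they then distribute mass uniformly down this tree and verify the H\"older estimate $\mu(B(x,\rho)) \lesssim \rho^{s}$ with $s=\min_j s_j$ by a case analysis according to which window $r_n^{a_{j+1}} \leq \rho < r_n^{a_j}$ the test radius falls into (this is exactly where each $s_j$ contributes a constraint). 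The mass distribution principle then gives $\cH^{s}(W_{\veca})>0$. That multi-scale measure estimate is the real content of the theorem and cannot be outsourced to a single ``formal'' invocation of the ball-to-ball Mass Transference Principle.
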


Furthermore, if we have the additional constraint $a_d > 1$, Wang, Wu and Xu are also able to say something about the Hausdorff measure of $W_{\veca}$ at the critical value
\begin{equation}\label{eq:critValue}
s:= \min_{1 \leq j \leq k}\left\{\frac{k + ja_j - \sum_{i=1}^{j}{a_i}}{a_j}\right\}.
\end{equation}

\begin{theorem}[Wang -- Wu -- Xu \cite{WWX ref}] \label{WWX Theorem 1.3}
Assume the same conditions as in Theorem \ref{WWX Theorem 1.2}. If the additional constraint that $a_d > 1$ holds, then
\[\cH^s(W_{\veca}) = \infty.\]
\end{theorem}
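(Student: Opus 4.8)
The plan is to promote the Hausdorff-dimension lower bound of Theorem~\ref{WWX Theorem 1.2} to an exact Hausdorff-measure statement via the mechanism underpinning the Mass Transference Principle itself: a $\limsup$ set whose Hausdorff \emph{content} at level $s$ is everywhere comparable to that of the ambient cube automatically has locally \emph{infinite} Hausdorff $s$-measure, once $s<k$. So the first observation to record is that the extra hypothesis forces $s<k$. Writing $\phi(j):=j+\big(k-\sum_{i=1}^{j}a_i\big)/a_j$, so that $s=\min_{1\le j\le k}\phi(j)$, one has $s\le\phi(k)=k+\big(k-\sum_{i=1}^{k}a_i\big)/a_k<k$, because $a_d>1$ together with $a_i\ge 1$ for all $i$ forces $\sum_{i=1}^{k}a_i>k$. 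Hence $\cH^s(B)=\infty$ for every ball $B\subseteq\I^k$, and it suffices to prove the localised statement $\cH^s(W_{\veca}\cap B)=\cH^s(B)$ for every ball $B$.

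The second step is to extract from the proof of Theorem~\ref{WWX Theorem 1.2} a \emph{content} lower bound rather than merely a dimension bound. In that proof one builds a Cantor-type subset $\cK\subseteq W_{\veca}$ supporting a probability measure $\mu$ obeying a mass-distribution estimate $\mu\big(B(\x,\rho)\big)\le C\rho^{s}$ at every scale $\rho$; the exponent $s$, and in particular the minimum over $j$, appears exactly because one must split the range of $\rho$ according to which of the descending side-lengths $r_n^{a_1}\ge r_n^{a_2}\ge\cdots\ge r_n^{a_k}$ the scale $\rho$ lies between on a rectangle of the $n$-th generation. By the mass distribution principle this gives $\cH^s_\infty(\cK\cap U)\ge\mu(U)/C$ for every set $U$. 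Since $|W_0|=1$ yields $|W_0\cap B|=|B|$ for \emph{every} ball $B$, the construction can be carried out inside an arbitrary $B$, and normalising there produces, with a constant $c>0$ independent of $B$, the uniform bound $\cH^s_\infty(W_{\veca}\cap B)\ge c\,\cH^s_\infty(B)\asymp c\,(\diam B)^{s}$.

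The third step is the self-improvement. Since $W_{\veca}=\limsup_{n}B^{\veca}_n$ is a $\limsup$ set of rectangles (which we may take open), the standard ``zero--full'' principle for $\limsup$ sets applies: a uniform content bound $\cH^f_\infty(E\cap B)\ge c\,\cH^f_\infty(B)$ over all balls $B$ self-improves to $\cH^f(E\cap B)=\cH^f(B)$ over all balls $B$ (this is precisely the phenomenon exploited in \cite{BV MTP}). Taking $f(r)=r^{s}$ and using $s<k$ gives $\cH^s(W_{\veca}\cap B)=\cH^s(B)=\infty$, and in particular $\cH^s(W_{\veca})=\infty$.

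The main obstacle is the second step. Theorem~\ref{WWX Theorem 1.2} as stated only outputs the Hausdorff dimension, so one genuinely has to re-enter the Wang--Wu--Xu construction and verify that it delivers a content bound of the sharp order $(\diam B)^{s}$, uniformly in $B$. Two features demand care: the construction is anisotropic --- the natural cells of $\mu$ are rectangles of eccentricity $r_n^{a_1-a_k}$ rather than round balls --- so one must estimate how a ball $B(\x,\rho)$ meets the rectangular Cantor pattern across \emph{all} scales, which is where the case analysis over $j$ reappears and where the hypothesis $a_d>1$ is used to guarantee that the coarsest scales contribute $\phi(k)<k$ rather than degenerating the estimate; and one must make the construction, and hence the constant $c$, uniform over every ball $B$, which is routine given the local fullness $|W_0\cap B|=|B|$ and the covering lemma already used in \cite{WWX ref}, but must be stated.
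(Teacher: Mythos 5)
A point of order first: the survey does not prove this statement. Theorem \ref{WWX Theorem 1.3} is quoted from \cite{WWX ref} without proof, so there is no in-paper argument to compare yours against; I can only assess the proposal on its own terms. Your first and third steps are sound. The computation $s\le\phi(k)=k+\big(k-\sum_{i=1}^{k}a_i\big)/a_k<k$ is correct (reading the survey's $a_d$ as $a_k$, which is what the inherited notation of \cite{WWX ref} means), and the upgrade from a uniform content bound to infinite measure does work for $f(r)=r^s$ with $s<k$: if $\cH^s(W_{\veca}\cap B_0)=M<\infty$ for some ball $B_0$ of radius $r_0$, then $\mu:=\cH^s|_{W_{\veca}\cap B_0}$ is a finite Borel measure satisfying $\mu(B)\ge\cH^s_\infty(W_{\veca}\cap B)\gtrsim c\,(\diam B)^s$ for every ball $B\subseteq B_0$, and packing $B_0$ with $\asymp(r_0/r)^k$ disjoint balls of radius $r$ gives $M\gtrsim c\,r_0^{k}\,r^{s-k}\to\infty$, a contradiction. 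Note that this is not quite "the phenomenon exploited in \cite{BV MTP}'', where the constant in the content bound is made arbitrarily large rather than fixed; the fixed-constant version suffices here only because $s<k$, which you do have.

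The genuine gap is your second step, and you have flagged it yourself: the entire content of the theorem lives in the assertion that one can build, inside an arbitrary ball $B$, a Cantor subset of $W_{\veca}$ carrying a probability measure with $\mu(B(\x,\rho))\le C\rho^{s}$ at the \emph{critical} exponent $s$, with $C$ independent of $B$. Theorem \ref{WWX Theorem 1.2} as stated yields only the Hausdorff dimension, i.e.\ such an estimate for exponents $t<s$, and that says nothing about $\cH^s$ itself. Passing from $t<s$ to $t=s$ is precisely where the work is: one must select, via a covering argument inside $B\cap W_0$ (using $|W_0\cap B|=|B|$), a well-separated subfamily of balls $B_n$ at each generation, distribute mass over the nested rectangles $B_n^{\veca}$, and carry out the case analysis over which pair of consecutive side-lengths $r_n^{a_j}, r_n^{a_{j+1}}$ the radius $\rho$ falls between --- this is where the minimum over $j$ and the slack provided by $a_d>1$ actually enter. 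None of this is supplied; it is deferred to ``re-entering'' a construction that is reproduced neither in your proposal nor in this paper. As it stands, the proposal is a correct and clean reduction of the theorem to its hardest step, not a proof of it.
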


Essentially, the results of Wang, Wu and Xu allow us to pass from a full Lebesgue measure statement for a $\limsup$ set defined by a sequence of balls to a Hausdorff measure statement for a $\limsup$ set defined by an associated sequence of rectangles. As an application, Wang, Wu and Xu demonstrate how Theorem \ref{WWX Theorem 1.2} may be applied to obtain the Hausdorff dimension of the following set of weighted simultaneously well-approximable points. Let $\tau = (\tau_1, \tau_2, \dots, \tau_k) \in \R^k$ be such that $\tau_i > 0$ for $1 \leq i \leq k$ and denote by $W_{k}(\tau)$ the set of points $\x = (x_1, x_2, \dots, x_k) \in \I^k$ such that
\begin{align} \label{rectangle inequality}
|qx_i + p_i| < q^{-\tau_i}, \quad 1 \leq i \leq k,
\end{align}
for infinitely many $(\p,q) \in \Z^k \times \N$. The following is derived in \cite{WWX ref} as a corollary to Theorem \ref{WWX Theorem 1.2}. 

\begin{corollary}[Wang -- Wu -- Xu \cite{WWX ref}] \label{WWX corollary}
Let $\tau = (\tau_1, \tau_2, \dots, \tau_k) \in \R^k$ be such that $\frac{1}{k} \leq \tau_1 \leq \tau_2 \leq \dots \leq \tau_k$, then
\[\dimh(W_{k}(\tau)) = \min_{1 \leq j \leq k}\left\{\frac{k + 1 + j\tau_j - \sum_{i=1}^{j}{\tau_i}}{1 + \tau_j}\right\}.\]
\end{corollary}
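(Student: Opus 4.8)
The plan is to derive the dimension formula from the ball\nobreakdash-to\nobreakdash-rectangle principle, Theorem~\ref{WWX Theorem 1.2}, for the lower bound, and from an elementary covering argument for the matching upper bound. For the lower bound, first rewrite the defining condition: $|qx_i+p_i|<q^{-\tau_i}$ is equivalent to $|x_i-(-p_i)/q|<q^{-(1+\tau_i)}$, so $W_k(\tau)$ is a $\limsup$ set of rectangles centred at the rational points $-\p/q$ with side\nobreakdash-lengths comparable to $(q^{-(1+\tau_1)},\dots,q^{-(1+\tau_k)})$. Enumerate the pairs $(\p,q)\in\Z^k\times\N$ with $-\p/q\in\I^k$ as a single sequence $(\x_n,r_n)_{n\in\N}$, choosing $\x_n=-\p/q$ and $r_n:=q^{-(1+1/k)}$, and set $\veca=(a_1,\dots,a_k)$ with $a_i:=\frac{1+\tau_i}{1+1/k}=\frac{k(1+\tau_i)}{k+1}$, so that $r_n^{a_i}=q^{-(1+\tau_i)}$ and hence, up to the constants inherent in the side\nobreakdash-length conventions (which do not affect Hausdorff dimension), $W_{\veca}\subseteq W_k(\tau)$. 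The hypotheses of Theorem~\ref{WWX Theorem 1.2} are met: monotonicity of $\tau$ gives $1\le a_1\le a_2\le\dots\le a_k$, with $a_1\ge 1$ holding precisely because $\tau_1\ge 1/k$; and $W_0$, the $\limsup$ set of the balls $B(-\p/q,q^{-(1+1/k)})$, has full Lebesgue measure by Khintchine's simultaneous theorem (Theorem~\ref{Khintchine's Simultaneous Theorem}) applied with an approximating function $\asymp q^{-1/k}$, whose associated sum $\asymp\sum_q q^{-1}$ diverges. Theorem~\ref{WWX Theorem 1.2} then gives $\dimh(W_k(\tau))\ge\dimh(W_{\veca})\ge\min_{1\le j\le k}\frac{k+ja_j-\sum_{i=1}^j a_i}{a_j}$, and a routine simplification, substituting $a_i=\frac{k(1+\tau_i)}{k+1}$, rewrites the $j$\nobreakdash-th term as $\frac{k+1+j\tau_j-\sum_{i=1}^j\tau_i}{1+\tau_j}$, which is the claimed lower bound.

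\textbf{Upper bound.}
Fix any $s$ strictly larger than $\min_{1\le j\le k}\frac{k+1+j\tau_j-\sum_{i=1}^j\tau_i}{1+\tau_j}$ and let $j^*$ attain the minimum. For each $Q\in\N$, the set $W_k(\tau)$ is covered by the rectangles $R(\p,q)$ with $q\ge Q$; since $\tau_1\le\dots\le\tau_k$ the side\nobreakdash-lengths $\rho_i:=q^{-(1+\tau_i)}$ are non\nobreakdash-increasing in $i$, so each $R(\p,q)$ can be covered by $\asymp\prod_{i=1}^{j^*}(\rho_i/\rho_{j^*})$ cubes of side $\rho_{j^*}$. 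Summing $\rho_{j^*}^s$ over these cubes, over the $\asymp q^k$ admissible vectors $\p$, and over $q\ge Q$ yields a series whose general term is a power of $q$ with exponent $k+j^*\tau_{j^*}-\sum_{i=1}^{j^*}\tau_i-s(1+\tau_{j^*})$; by the choice of $s$ this exponent is $<-1$, so the series converges and its tail tends to $0$ as $Q\to\infty$. Since $\rho_{j^*}\to 0$ as $q\to\infty$, this shows $\cH^s(W_k(\tau))=0$, and letting $s$ decrease to the minimum yields $\dimh(W_k(\tau))\le\min_{1\le j\le k}\frac{k+1+j\tau_j-\sum_{i=1}^j\tau_i}{1+\tau_j}$, completing the proof.

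\textbf{Where the difficulty lies.}
The substantive ingredient is Theorem~\ref{WWX Theorem 1.2} itself, which we may invoke freely; everything else is bookkeeping, but two points deserve attention. The first is verifying the hypotheses of Theorem~\ref{WWX Theorem 1.2} for the chosen $\veca$: it is a useful sanity check that $a_1\ge 1$ is exactly the condition $\tau_1\ge 1/k$ appearing in the corollary and that the required ordering $a_1\le\dots\le a_k$ is inherited from $\tau_1\le\dots\le\tau_k$, and one also has to confirm $|W_0|=1$, which is immediate from the classical Khintchine theory. The second, and more genuinely technical, point is the covering computation in the upper bound: one must choose the covering scale $\rho_{j^*}$ so as to make the exponent of $q$ as large as possible while still summable, and count both the cubes tiling each rectangle and the admissible rational centres correctly. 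Constant factors arising from comparing balls, cubes and the two side\nobreakdash-length conventions are harmless at every stage, so the only real obstacle is organising this routine\nobreakdash-but\nobreakdash-fiddly covering argument.
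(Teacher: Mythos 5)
Your argument is correct and is essentially the route the paper has in mind: the choice $r_n=q^{-(1+1/k)}$, $a_i=\tfrac{k(1+\tau_i)}{k+1}$ (so that $a_1\ge 1$ is exactly $\tau_1\ge 1/k$), the full-measure input from Khintchine's simultaneous theorem, and the standard covering upper bound at scale $q^{-(1+\tau_{j^*})}$ all check out, and this is precisely the application of Theorem~\ref{WWX Theorem 1.2} that the paper attributes to Wang--Wu--Xu. The paper itself only sketches an even shorter alternative, namely specialising Rynne's Theorem~\ref{Theorem R} with $Q=\N$ and $\nu(\N)=1$, but your self-contained derivation is sound.
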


While the proof of Corollary \ref{WWX corollary} given in \cite{WWX ref} is novel and is a neat application of Theorem \ref{WWX Theorem 1.2} the result itself was already previously known. In fact, Corollary \ref{WWX corollary} is a special case of an earlier more general theorem due to Rynne~\cite{Rynne 98} which we now state. 

Suppose $Q$ is an arbitrary infinite set of natural numbers and, given $\tau \in \R^k$, let $W_{k}^{Q}(\tau)$ denote the set of points $\x \in \I^k$ for which the inequalities in (\ref{rectangle inequality}) hold for infinitely many pairs $(\p,q) \in \Z^k \times Q$, hence $W_{k}^{\N}(\tau) = W_{k}(\tau)$. Define 
\[\nu(Q) = \inf\left\{\nu \in \R: \sum_{q \in Q}{q^{-\nu}} < \infty \right\}\]
and let $\sigma(\tau) = \sum_{i=1}^{k}{\tau_i}$.

\begin{theorem}[Rynne \cite{Rynne 98}] \label{Theorem R}
Let $\tau = (\tau_1, \tau_2, \dots, \tau_k) \in \R^k$ be such that $0 < \tau_1 \leq \tau_2 \leq \dots \leq \tau_k$. Let $Q \subseteq \N$ be arbitrary and suppose that $\sigma(\tau) \geq \nu(Q)$. Then,
\[\dim{W_{k}^{Q}(\tau)} = \min_{1 \leq j \leq k}\left\{\frac{k+\nu(Q)+j\tau_j - \sum_{i=1}^{j}{\tau_i}}{1+\tau_j}\right\}.\]
\end{theorem}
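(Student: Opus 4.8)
The plan is to prove the two inequalities $\dimh W_k^Q(\tau)\le s_0$ and $\dimh W_k^Q(\tau)\ge s_0$ separately, where $s_0:=\min_{1\le j\le k}\big(k+\nu(Q)+j\tau_j-\sum_{i=1}^{j}\tau_i\big)/(1+\tau_j)$. The upper bound will come from the natural cover; for the lower bound I would realise a subset of $W_k^Q(\tau)$ as a $\limsup$ set of rectangles produced from a full measure $\limsup$ set of balls and apply the Wang--Wu--Xu principle (Theorems~\ref{WWX Theorem 1.2} and~\ref{WWX Theorem 1.3}).

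For the upper bound: for $q\in Q$ and each relevant $\p\in\Z^k$, the $\x\in\I^k$ satisfying~(\ref{rectangle inequality}) form a rectangle centred at $-\p/q$ with side lengths $2q^{-1-\tau_i}$; there are $O(q^k)$ of them, and for every $N$ the set $W_k^Q(\tau)$ lies in their union over $q\in Q$ with $q\ge N$. Fixing $1\le j\le k$, each such rectangle is covered by $O(q^{\,j\tau_j-\sum_{i=1}^{j}\tau_i})$ balls of radius $q^{-1-\tau_j}$ (one ball suffices in the coordinates $i>j$). The resulting $\cH^s$-sum is comparable to $\sum_{q\in Q,\,q\ge N}q^{\,k+j\tau_j-\sum_{i=1}^{j}\tau_i-(1+\tau_j)s}$, which by the definition of $\nu(Q)$ converges as soon as $s>\big(k+\nu(Q)+j\tau_j-\sum_{i=1}^{j}\tau_i\big)/(1+\tau_j)$; as the covering balls have diameter $O(N^{-1})$, a tail argument then gives $\cH^s(W_k^Q(\tau))=0$. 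Taking $j$ to realise the minimum in $s_0$ gives $\dimh W_k^Q(\tau)\le s_0$.

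For the lower bound I would fix $\rho$ with $0<\rho<\min\{1+\tau_1,\,1+\nu(Q)/k\}$, enumerate as $(B_n)$ the balls $B(\p/q,q^{-\rho})$ with $q\in Q$, $\p/q\in\I^k$, and set $\veca=\big((1+\tau_1)/\rho,\dots,(1+\tau_k)/\rho\big)$. Then $1\le a_1\le\dots\le a_k$, the rectangle $B_n^{\veca}$ has side lengths $q^{-1-\tau_i}$, and hence $W_{\veca}\subseteq W_k^Q(\tau)$ (up to the factor $\tfrac12$ in the side lengths and a relabelling $\p\mapsto-\p$). Assuming $|W_0|=1$, Theorems~\ref{WWX Theorem 1.2} and~\ref{WWX Theorem 1.3} give $\dimh W_k^Q(\tau)\ge\dimh W_{\veca}\ge\min_{1\le j\le k}\big(k\rho+j\tau_j-\sum_{i=1}^{j}\tau_i\big)/(1+\tau_j)$. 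If $\nu(Q)\le k\tau_1$ one lets $\rho\uparrow1+\nu(Q)/k$, so $k\rho\to k+\nu(Q)$ and this increases to $s_0$ (this also reproves Corollary~\ref{WWX corollary}: there $Q=\N$, $\nu(\N)=1$, and $\tau_1\ge1/k$ is exactly $\nu(\N)\le k\tau_1$). If $\nu(Q)>k\tau_1$ the requirement $a_1\ge1$ caps $\rho$ at $1+\tau_1$, and one only gets $\min_j\big(k+k\tau_1+j\tau_j-\sum_{i=1}^{j}\tau_i\big)/(1+\tau_j)<s_0$; to reach $s_0$ here one must allow the target rectangle to be \emph{longer} than the source ball in the directions with small $\tau_i$ (it is still a genuine shrinking in volume, since $\sum_i(1+\tau_i)=k+\sigma(\tau)\ge k+\nu(Q)$), which is beyond Theorem~\ref{WWX Theorem 1.2} and requires either a rectangles-to-rectangles mass transference principle --- started from a full measure $\limsup$ set of rectangles with side lengths $q^{-b_i}$, $\sum_i(b_i-1)$ just below $\nu(Q)$ --- or Rynne's original ubiquity-based Cantor construction inside $W_k^Q(\tau)$ together with the mass distribution principle.

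The main obstacle is twofold. First, one must establish $|W_0|=1$, \ie the ubiquity of the rationals with denominators restricted to the arbitrary set $Q$; this is a divergence-type statement for simultaneous approximation whose ``approximating function'' is supported on a possibly sparse set and therefore not monotonic, so Khintchine's Theorem (Theorem~\ref{Khintchine's Simultaneous Theorem}) does not apply directly and one needs a second-moment/quasi-independence argument, together with a limiting step at the boundary value where the governing series only just fails to diverge. Secondly, and more seriously, the regime $\nu(Q)>k\tau_1$ is genuinely harder: the ball-to-rectangle transference alone does not suffice, and the extra dimension coming from the density of $Q$ must be extracted through the full ubiquity machinery (or a sufficiently general rectangles-to-rectangles principle). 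The hypothesis $\sigma(\tau)\ge\nu(Q)$ is what keeps us in this ``fractal'' regime --- it is exactly the condition making $s_0\le k$ --- and ensures there is enough room to carry the construction out.
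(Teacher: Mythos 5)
First, a point of reference: the paper does not prove Theorem \ref{Theorem R} at all --- it is quoted from Rynne \cite{Rynne 98}, and is used only in the reverse direction, to observe that Corollary \ref{WWX corollary} is the special case $Q=\N$, $\tau_1\ge 1/k$. So your attempt has to be judged on its own merits rather than against an in-paper argument. Your upper bound is correct and complete in outline: the counting of translates, the covering of each rectangle by $q^{\,j\tau_j-\sum_{i\le j}\tau_i}$ balls of radius $q^{-1-\tau_j}$, and the appeal to the definition of $\nu(Q)$ all check out, and the computation that $\sigma(\tau)\ge\nu(Q)$ is equivalent to $s_0\le k$ is also right. The reduction of the lower bound to Theorems \ref{WWX Theorem 1.2} and \ref{WWX Theorem 1.3} via the balls $B(\p/q,q^{-\rho})$ and the exponent vector $\veca=((1+\tau_1)/\rho,\dots,(1+\tau_k)/\rho)$ is exactly the mechanism by which the paper derives Corollary \ref{WWX corollary}, and your bookkeeping ($k\rho\to k+\nu(Q)$ as $\rho\uparrow 1+\nu(Q)/k$) is correct.

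The proposal nevertheless has two genuine gaps, both of which you flag but neither of which you close. The first is the input $|W_0|=1$ for the restricted-denominator balls $B(\p/q,q^{-\rho})$, $q\in Q$: for $k\ge 2$ this can be extracted from the Pollington--Vaughan theorem (Conjecture \ref{higher dimensional Duffin-Schaeffer conjecture}) applied to $\psi(q)=q^{1-\rho}$ supported on $Q$, but for $k=1$ the non-monotonicity is a real obstruction and one needs either the Duffin--Schaeffer theorem or the ubiquity machinery; this step must actually be supplied, not merely named. The second gap is fatal to the method in the regime $\nu(Q)>k\tau_1$: the constraint $a_1\ge 1$ in Theorem \ref{WWX Theorem 1.2} caps $\rho$ at $1+\tau_1$ and the resulting bound $\min_j\bigl(k+k\tau_1+j\tau_j-\sum_{i\le j}\tau_i\bigr)/(1+\tau_j)$ falls strictly short of $s_0$. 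Your two proposed escapes are not available: a rectangles-to-rectangles mass transference principle of the required generality is precisely what Section \ref{rectangles section} poses as an open problem (the bound (\ref{eq:upperbound}) is only an upper bound there), and ``Rynne's original ubiquity-based Cantor construction'' is the proof you are being asked to give, so invoking it is circular. As it stands, therefore, your argument proves Theorem \ref{Theorem R} only under the additional hypothesis $\nu(Q)\le k\tau_1$ --- which, for $Q=\N$, is exactly the content of Corollary \ref{WWX corollary} already recorded in the paper --- and the general case genuinely requires the ubiquity/Cantor-construction argument of \cite{Rynne 98} (or the later full rectangles-to-rectangles theory), not a limiting refinement of the ball-to-rectangle transference.
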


We may easily recover Corollary \ref{WWX corollary} by taking $Q = \N$ in Theorem \ref{Theorem R} and noting that $\nu(\N) = 1$. Since the hypotheses of Corollary \ref{WWX corollary} demand that $\tau_i \geq \frac{1}{k}$ for all $1 \leq i \leq k$ we see that the condition $\sigma(\tau) \geq \nu(Q)$ in Theorem \ref{Theorem R} is also satisfied.

Sets such as $W_{k}(\tau)$ and variations on $W_{k}^{Q}(\tau)$ have been studied in some depth, with particular attention paid to the question of determining their Hausdorff dimension, even before the work of Rynne \cite{Rynne 98}. For example, consider $\tau \in \R$ for some $\tau > 1$. Then the set $W_{1}^{\N}(\tau) = W_{1}(\tau)$ coincides precisely with the set $\cA(\tau)$ considered in the Jarn\'{\i}k--Besicovitch Theorem (Theorem \ref{JB Theorem}). For an overview of some other earlier work in this direction we direct the reader to the discussion given in \cite{Rynne 98} and references therein.

\subsection{Rectangles to rectangles}

The original Mass Transference Principle (Theorem \ref{mtp theorem}) allows us to transition from Lebesgue to Hausdorff measure statements when our original and ``transformed'' $\limsup$ sets are defined by sequences of balls, \ie it allows us to go from ``balls to balls''. Theorem \ref{WWX Theorem 1.2} allows us to go from ``balls to rectangles''. Another goal which we might like to achieve, which is not covered by any of the frameworks mentioned so far, would be to prove a similar mass transference principle where we both start and finish with $\limsup$ sets arising from sequences of rectangles, \ie from ``rectangles to rectangles''. 

\begin{problem}
Does there exist a mass transference principle, similar to Theorem \ref{mtp theorem} or Theorem \ref{WWX Theorem 1.2}, where both the original and transformed $\limsup$ sets are defined by sequences of rectangles?
\end{problem}

Although in the most general settings this problem remains open we survey what can be said in a few special cases. 

In \cite{BV Slicing} Beresnevich and Velani employ a ``slicing'' technique, which uses a combination of a slicing lemma and the original Mass Transference Principle, to prove Theorem \ref{BV Slicing Theorem}. We show how an appropriate combinination of these two results can also be applied to considering the problem of proving a mass transference principle for rectangles. We proceed by stating the ``Slicing Lemma'' as given by Beresnevich and Velani in \cite{BV Slicing}.

\begin{lemma}[Slicing Lemma \cite{BV Slicing}] \label{Slicing Lemma}
Let $l, k \in \N$ be such that $l \leq k$ and let $f$ and $g: r \to r^{-l}f(r)$ be dimension functions. Let $A \subset \R^k$ be a Borel set and let $V$ be a $(k-l)$-dimensional linear subspace of $\R^k$. If for a subset $S$ of $V^{\perp}$ of positive $\cH^l$-measure
\[\cH^g(A \cap (V+b)) = \infty \quad \text{for all } b \in S,\]
then $\cH^f(A) = \infty$.
\end{lemma}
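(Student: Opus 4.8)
The plan is to reduce the statement to the classical fact that Hausdorff measure of a product (or, more precisely, of a set fibred over a subspace) is controlled by integrating the measures of the fibres. Write $\R^k = V \oplus V^\perp$, where $\dim V = k-l$ and $\dim V^\perp = l$. For $b \in V^\perp$ the affine slice $A \cap (V+b)$ is a subset of the translate of $V$, which we may identify with $\R^{k-l}$; the dimension function $g(r) = r^{-l}f(r)$ is the one naturally adapted to these $(k-l)$-dimensional slices, since ``restoring'' the $l$ lost directions with the factor $r^l$ turns $g$ back into $f$. The heuristic is: an efficient cover of $A$ by sets of diameter $\le \rho$ induces, for each $b$, a cover of the slice $A \cap (V+b)$; conversely, if every slice over a positive-$\cH^l$-measure set $S$ has infinite $\cH^g$-measure, then any cover of $A$ must be so wasteful that $\sum f(\diam U_i) = \infty$.

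First I would fix an arbitrary countable cover $\{U_i\}$ of $A$ by sets with $\diam(U_i) \le \rho$ (it suffices to treat $A$ bounded, or to intersect with a large ball, since $\cH^f$ is a measure and we only need a lower bound). For each $i$, let $\pi: \R^k \to V^\perp$ be the orthogonal projection and note $\pi(U_i) \subset V^\perp$ has $\diam \le \diam(U_i)$, while for each $b \in \pi(U_i)$ the ``vertical'' slice $U_i \cap (V+b)$ has diameter $\le \diam(U_i)$ as well. The key inequality I would establish is the Fubini-type bound
\begin{equation*}
\sum_i f(\diam U_i) \;\geq\; c\int_{V^\perp} \Big( \sum_{i \,:\, b \in \pi(U_i)} g(\diam U_i) \Big)\, d\cH^l(b),
\end{equation*}
for a constant $c > 0$ depending only on $k,l$ (and on the dimension function being doubling, or via a standard approximation argument if it is not). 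This follows from $f(\diam U_i) = (\diam U_i)^l \, g(\diam U_i)$ together with $\cH^l(\pi(U_i)) \le c^{-1} (\diam U_i)^l$, which is just the definition of $l$-dimensional Hausdorff measure / content of a set of that diameter. Given the cover $\{U_i\}$ of $A$, the subfamily $\{U_i : b \in \pi(U_i)\}$ covers $A \cap (V+b)$ by sets of diameter $\le \rho$, so the inner sum is at least $\cH^g_\rho(A \cap (V+b))$. Hence the right-hand side is at least $c\int_S \cH^g_\rho(A\cap(V+b))\,d\cH^l(b)$; letting $\rho \to 0$ and using monotone convergence together with the hypothesis $\cH^g(A \cap (V+b)) = \infty$ for $b \in S$ and $\cH^l(S) > 0$, we get that the right-hand side is infinite, whence $\sum_i f(\diam U_i) = \infty$. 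Since the cover was arbitrary, $\cH^f(A) = \cH^f_\rho(A) = \infty$.

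The main obstacle is making the Fubini-type inequality rigorous without extra regularity on the cover: the sets $U_i$ are arbitrary, not cubes aligned with the decomposition $V \oplus V^\perp$, so I would first replace each $U_i$ by a ball (or an aligned cube) of comparable diameter, absorbing the loss into the constant $c$ — this is harmless because $\diam$ and the relevant measures change only by bounded factors, and for a ball the projection and the slices genuinely have diameter $\le \diam(U_i)$. A second technical point is the interchange of the sum over $i$ with the integral over $V^\perp$: this is legitimate by Tonelli's theorem since everything is non-negative and the integrand $b \mapsto \sum_{i: b\in\pi(U_i)} g(\diam U_i)$ is a non-negative (Borel-measurable, as a countable sum of measurable functions) function. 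Finally, one should note that the measurability of $b \mapsto \cH^g_\rho(A \cap (V+b))$ is not actually needed for the conclusion, since we only use the lower bound $\ge c\,\cH^l(S)\cdot\inf_{b\in S}\cH^g_\rho(A\cap(V+b))$ after restricting the domain of integration to $S$; and even the infimum bound can be sidestepped by applying the estimate with $S$ replaced by a subset on which $\cH^g_\rho(A\cap(V+b))$ exceeds any prescribed threshold, which has positive $\cH^l$-measure by the hypothesis for small enough $\rho$.
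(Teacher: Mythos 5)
The paper does not actually prove this lemma; it is quoted verbatim from \cite{BV Slicing}, where it is in turn derived by exactly the projection--Tonelli argument you outline (this is the classical ``large slices imply large sets'' direction, cf.\ Falconer's \emph{Fractal Geometry} or Mattila \cite{Mattila ref}). Your plan is correct in substance: the chain $\sum_i f(\diam U_i) = \sum_i (\diam U_i)^l g(\diam U_i) \geq c \sum_i \cH^l(\pi(U_i))\, g(\diam U_i) = c \int_{V^\perp} \sum_{i : b \in \pi(U_i)} g(\diam U_i)\, d\cH^l(b)$ by Tonelli, together with the observation that $\{U_i : b \in \pi(U_i)\}$ covers $A \cap (V+b)$, is the whole proof. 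Two remarks. First, your ``main obstacle'' is not one: since $\pi$ is $1$-Lipschitz, $\pi(U_i)$ has diameter at most $\diam(U_i)$ for an arbitrary set $U_i$, so $\cH^l(\pi(U_i)) \leq C_l (\diam U_i)^l$ directly and no replacement by balls or cubes is needed. Second, the phrase ``letting $\rho \to 0$ and using monotone convergence'' is the only place where the order of quantifiers matters: a given cover has a fixed mesh $\rho$, so from one cover you only get the lower bound $C^{-1}\int_S \cH^g_\rho(A \cap (V+b))\,d\cH^l(b)$, which can be finite. The correct conclusion is $\cH^f_\rho(A) \geq C^{-1}\int_S \cH^g_\rho(A\cap(V+b))\,d\cH^l(b)$ for every $\rho$, and then one lets $\rho \to 0$ on \emph{both} sides; your final paragraph (choosing, for each threshold $M$, a $\rho$ so small that $\{b \in S : \cH^g_\rho(A\cap(V+b)) > M\}$ still has positive outer $\cH^l$-measure, using that these sets increase to $S$ and that $\cH^l$ is a regular outer measure) is precisely the right way to make this rigorous while also dodging the measurability of $b \mapsto \cH^g_\rho(A\cap(V+b))$. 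With that ordering made explicit, the proof is complete.
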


Suppose that $(\x_n)_{n} = (x_{n,1},x_{n,2},\dots,x_{n,k})_{n}$ is a sequence of points in $[0,1]^k$. Let $(r_n^1)_n, (r_n^2)_n, \dots, (r_n^k)_n$ be sequences of positive real numbers and suppose that $r_n^1 \to 0$ as $n \to \infty$. Let
\[H_n = \prod_{i=1}^k{B(x_{n,i}, r_n^i)}\]
be a sequence of rectangles in $[0,1]^k$, where $\prod_{i=1}^{k}{A_i} = A_1 \times A_2 \times \dots \times A_k$ is the Cartesian product of subsets $A_i$ of $\R^k$. Let $\alpha > 1$ be a real number and define another sequence of rectangles by
\[h_n = B(x_{n,1},(r_n^1)^{\alpha}) \times \prod_{i=2}^k{B(x_{n,i}, r_n^i)}\]
so $h_n$ is essentially a ``shrunk'' rectangle corresponding to $H_n$ from the original sequence. Note that in this case we only allow shrinking of the original rectangle in one direction. Then, we are able to establish the following.

\begin{proposition} \label{rectangles slicing result 1}
Let the sequences $H_n$ and $h_n$ be as given above and further suppose that $|\limsup_{n \to \infty}{H_n}| = 1$. Then,
\[\dimh\left(\limsup_{n \to \infty}{h_n}\right) \geq \frac{1}{\alpha} + k - 1.\]
\end{proposition}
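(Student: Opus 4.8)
The plan is to realise $\limsup h_n$ as a slice of a higher-dimensional $\limsup$ set of balls to which the original Mass Transference Principle (or Theorem \ref{WWX Theorem 1.2}) applies, and then run the Slicing Lemma in reverse. The geometric picture is this: the rectangle $h_n$ is the product of a shrunk interval $B(x_{n,1},(r_n^1)^\alpha)$ in the first coordinate with the unshrunk rectangle $\prod_{i=2}^k B(x_{n,i},r_n^i)$ in the remaining coordinates. The key observation is that shrinking \emph{only in one direction} means the ``transference'' is genuinely one-dimensional, so we should be able to set things up so that the nontrivial work happens in a single coordinate and the other $k-1$ coordinates come along for free. Concretely, I would first reduce to the case where the rectangles $\prod_{i=2}^k B(x_{n,i},r_n^i)$ have comparable side-lengths to $r_n^1$ in a suitable sense, or more cleanly: since $|\limsup_n H_n| = 1$, fix a generic point $b = (b_2,\dots,b_k)$ in the last $k-1$ coordinates and consider the one-dimensional slice. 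Along most such slices, infinitely many of the intervals $B(x_{n,1},r_n^1)$ (for those $n$ with $H_n$ hitting the slice) cover a full-measure subset of $[0,1]$.

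The main step is then a one-dimensional application of a mass transference principle. For a fixed generic slice, we have a full-measure $\limsup$ set of intervals $B(x_{n,1},r_n^1)$ in $[0,1]$, and we want a Hausdorff-dimension lower bound for the $\limsup$ set of the shrunk intervals $B(x_{n,1},(r_n^1)^\alpha)$. By the original Mass Transference Principle (Theorem \ref{mtp theorem}) in dimension $k=1$ with $f(r) = r^{1/\alpha}$ — noting $(r_n^1)^\alpha$ plays the role of the original radius and $((r_n^1)^\alpha)^{1/\alpha} = r_n^1$ plays the role of $B_n^f$ — we get $\cH^{1/\alpha}\big(\limsup_n B(x_{n,1},(r_n^1)^\alpha)\big) = \cH^{1/\alpha}([0,1]) = \infty$ along such a slice. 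Then I would apply the Slicing Lemma (Lemma \ref{Slicing Lemma}) with $k$ replaced by $k$, $l = k-1$, $V$ the $(k-1)$-dimensional subspace spanned by the last $k-1$ coordinates — wait, one needs the slices to be $1$-dimensional and $V^\perp$ to be $(k-1)$-dimensional, so actually $V$ should be the first coordinate axis and $V^\perp$ the last $k-1$ coordinates, with $l = k-1$, $g(r) = r^{-(k-1)}f(r)$ where $f(r) = r^{1/\alpha + k - 1}$, so $g(r) = r^{1/\alpha}$. Applying the Slicing Lemma with this $f$ and $g$: on a positive-$\cH^{k-1}$-measure set $S$ of slices we have $\cH^{g}\big(\limsup_n h_n \cap (V + b)\big) = \cH^{1/\alpha}(\cdots) = \infty$, and therefore $\cH^{f}(\limsup_n h_n) = \infty$, giving $\dim_H \limsup_n h_n \geq 1/\alpha + k - 1$.

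The main obstacle, and where care is needed, is establishing that the slicing hypothesis holds along a positive-measure set of translates — i.e., that for $\cH^{k-1}$-a.e.\ choice of $b$ in the last $k-1$ coordinates, the one-dimensional slice $\{n : (b_2,\dots,b_k) \in \prod_{i=2}^k B(x_{n,i},r_n^i)\}$ is infinite \emph{and} the corresponding intervals $B(x_{n,1},r_n^1)$ still form a full-measure $\limsup$ set in $[0,1]$. This is essentially a Fubini argument: $|\limsup_n H_n| = 1$ means that for a.e.\ $b$, the slice $(\limsup_n H_n) \cap (V+b)$ has full linear measure, and $\limsup_n H_n \cap (V+b) \subseteq \limsup_{n \in N(b)} B(x_{n,1},r_n^1)$ where $N(b) = \{n : b \in \prod_{i\ge 2} B(x_{n,i},r_n^i)\}$. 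So for a.e.\ $b$ the slice $\limsup_{n\in N(b)} B(x_{n,1},r_n^1)$ has full measure in $[0,1]$, and we apply the $1$-dimensional MTP to this sub-collection. One subtle point is that $r_n^1 \to 0$ is given, which is exactly the hypothesis needed for the $1$-dimensional MTP along each slice; the side-lengths in the other coordinates need not tend to zero, but this does not matter since those coordinates are not being transformed. I would also remark that exactly this slicing strategy is the one used to prove Theorem \ref{BV Slicing Theorem}, so the argument is a direct adaptation; the cleanest write-up probably fixes the Fubini/slicing reduction first as a short lemma and then quotes Theorem \ref{mtp theorem} and Lemma \ref{Slicing Lemma} in sequence.
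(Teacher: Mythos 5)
Your proposal is correct and follows essentially the same route as the paper: fix a generic slice parallel to the first coordinate axis via Fubini, identify $\limsup_n H_n$ and $\limsup_n h_n$ restricted to that slice with one-dimensional $\limsup$ sets of intervals of radii $r_{n_j}^1$ and $(r_{n_j}^1)^\alpha$ respectively, apply the one-dimensional Mass Transference Principle with $f(r)=r^{1/\alpha}$ to get infinite $\cH^{1/\alpha}$-measure on the slice, and then invoke the Slicing Lemma with $l=k-1$. The details you flag as needing care (the sub-collection $N(b)$, the Borel/positive-measure hypothesis of the Slicing Lemma, and that only $r_n^1\to 0$ is needed) are exactly the points the paper's proof handles in the same way.
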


\begin{proof}
Let $V = \{\x=(x_1,\dots,x_k) \in [0,1]^k: x_i=0 \quad \text{for all } i \neq 1\}$. Since $|\limsup_{n \to \infty}{H_n}| = 1$, for Lebesgue almost every \[\vecb \in \{\x = (x_1,\dots,x_k) \in [0,1]^k : x_1 = 0\}\] we have
\[ |(V + \vecb) \cap \limsup_{n \to \infty}{H_n}| = 1.\]
Let us fix a $\vecb$ for which this holds and let $W = V + \vecb$. Now, $\limsup_{n \to \infty}{H_n} \cap W$ can be written as the $\limsup$ set of a sequence of balls $B_j = B(x_{n_j,1},r_{n_j}^1)$ with radii $r_{n_j}^1$. Note that $|\limsup_{j \to \infty}{B_j} \cap W| = 1$. For each $j$ also let $b_j = B(x_{n_j,1},(r_{n_j}^1)^{\alpha})$ and note that 
\[\limsup_{j \to \infty}{b_j} \cap W = \limsup_{n \to \infty}{h_n} \cap W.\] 
In accordance with our earlier notation, $b_j^s = B(x_{n_j,1}, (r_{n_j}^1)^{\alpha s})$. Therefore, if $s \leq \frac{1}{\alpha}$ then $(r_{n_j}^1)^{\alpha s} \geq r_{n_j}^1$ for sufficiently large $j$ and so 
\[b_j^s \supseteq B_j \quad \text{and } \quad |\limsup_{j \to \infty}{b_j^s} \cap W| = 1.\] 
Thus, for any $s \leq \frac{1}{\alpha}$ we may use the Mass Transference Principle to conclude that for any ball $B \subseteq W$ we have
\[\cH^s(\limsup_{j \to \infty}{b_j} \cap B) = \cH^s(B).\]
In particular, since $s \leq \frac{1}{\alpha} < 1$, this means
\[\cH^s(\limsup_{n \to \infty}{h_n} \cap W) = \cH^s(W) = \infty.\]
Since this is the case for Lebesgue almost every $\vecb \in \{\x = (x_1,\dots,x_k): x_1 = 0\}$ we can use the slicing lemma to conclude that
\[\cH^{s'}(\limsup_{n \to \infty}{h_n}) = \infty\]
for all $s' \leq \frac{1}{\alpha} + k - 1$. 
Therefore, it follows that 
\[\dimh\left(\limsup_{n \to \infty}{h_n}\right) \geq \frac{1}{\alpha} + k - 1.\] 
\end{proof}

Using Theorem \ref{WWX Theorem 1.3} in place of Theorem \ref{mtp theorem} we are actually able to extend this argument a little further. Again, let $(\x_n)_n = (x_{n,1}, x_{n,2}, \dots, x_{n,k})_n$ be a sequence of points in $[0,1]^k$ and let $(r_n^1)_n, (r_n^2)_n, \dots, (r_n^k)_n$ be sequences of positive real numbers. Suppose that for some $1 \leq k_0 \leq k$ we have $r_n^1 = r_n^2 = \dots = r_n^{k_0}$ for all $n \in \N$ and also that $r_n^1 \to 0$ as $n \to \infty$. Let
\[H_n = \prod_{i=1}^k{B(x_{n,i},r_n^i)}\]
define a sequence of rectangles in $[0,1]^k$. 
Next, let $1 \leq a_1 \leq a_2 \leq \dots \leq a_{k_0}$ be real numbers and suppose $a_{k_{0}}>1$. For each rectangle $H_n$ in our original sequence we define a corresponding ``shrunk'' rectangle
\[h_n = \prod_{i=1}^{k_0}{B(x_{n,i},(r_n^i)^{a_i})} \times \prod_{i=k_0+1}^{k}{B(x_{n,i},r_n^i)}.\]

In this case we are able to prove the following.

\begin{proposition} \label{rectangles slicing result 2}
Let the sequences of rectangles $H_n$ and $h_n$ be as given above and further suppose that $|\limsup_{n \to \infty}{H_n}| = 1$. Then,
\[\dimh\left(\limsup_{n \to \infty}{h_n}\right) \geq \min_{1 \leq j \leq k_0}\left\{\frac{k_0 + ja_j - \sum_{i=1}^{j}{a_i}}{a_j} + k - k_0\right\}.\]
\end{proposition}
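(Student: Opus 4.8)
\emph{Proof proposal.} The plan is to mimic the proof of Proposition \ref{rectangles slicing result 1}, but now slicing along the last $k-k_0$ coordinates and applying the balls-to-rectangles transference of Wang--Wu--Xu (Theorems \ref{WWX Theorem 1.2} and \ref{WWX Theorem 1.3}) \emph{inside each slice} in place of the original Mass Transference Principle. Concretely, set
\[
V = \{\x = (x_1,\dots,x_k)\in\R^k : x_i = 0 \text{ for all } i > k_0\},
\]
a $k_0$-dimensional subspace, so that $V^{\perp}$ is $(k-k_0)$-dimensional. Since $|\limsup_n H_n| = 1$, Fubini's theorem gives that for Lebesgue almost every $\vecb\in V^{\perp}$ the slice $W := V+\vecb$ satisfies $\cH^{k_0}\big((W\cap\limsup_n H_n)\big) = \infty$ (indeed, full $k_0$-dimensional Lebesgue measure on $W\cap[0,1]^k$). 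Fix such a $\vecb$.

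The key step — and the one place the hypothesis $r_n^1 = \dots = r_n^{k_0}$ is used — is to describe the traces on $W$. A rectangle $H_n$ meets $W$ precisely when the last $k-k_0$ coordinates of $\vecb$ lie in the corresponding intervals of $H_n$; list these indices as $n = n_j$ (this set is infinite, since the trace has positive $k_0$-measure). For such $n$, after identifying $W$ with $\R^{k_0}$, the intersection $H_{n_j}\cap W$ is a ball $B_j := B(\tilde{\x}_{n_j}, r_{n_j}^1)$ of radius $r_{n_j}^1$ — a genuine ball exactly because the first $k_0$ side-lengths of $H_n$ agree. Since $h_n$ shares its last $k-k_0$ side-lengths with $H_n$, the rectangle $h_n$ meets $W$ for the same indices $n=n_j$, and $h_{n_j}\cap W$ is precisely the rectangle $B_j^{\veca}$ with $\veca = (a_1,\dots,a_{k_0})$ in the Wang--Wu--Xu notation. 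Hence $\limsup_n H_n \cap W = \limsup_j B_j$ (a $\limsup$ set of balls of full $k_0$-measure) and $\limsup_n h_n \cap W = \limsup_j B_j^{\veca}$. Applying Theorem \ref{WWX Theorem 1.2} together with Theorem \ref{WWX Theorem 1.3} in dimension $k_0$ (the latter using the assumption $a_{k_0}>1$, so that the required constraint on the largest exponent holds) yields
\[
\cH^{s}\Big(\limsup_n h_n \cap W\Big) = \infty, \qquad s := \min_{1\le j\le k_0}\left\{\frac{k_0 + j a_j - \sum_{i=1}^{j} a_i}{a_j}\right\}.
\]

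Since this holds for a set of $\vecb\in V^{\perp}$ of positive $\cH^{k-k_0}$-measure, we invoke the Slicing Lemma (Lemma \ref{Slicing Lemma}) with $l = k-k_0$, with $A = \limsup_n h_n$ (a Borel set), and with $g(r) = r^{s}$, $f(r) = r^{l}g(r) = r^{k-k_0+s}$ — both dimension functions, as $s>0$ (the defining minimum is over strictly positive quantities, since $k_0 + ja_j-\sum_{i=1}^j a_i \ge k_0 \ge 1$ and $a_j\ge 1$) — to conclude $\cH^{k-k_0+s}(\limsup_n h_n) = \infty$. Therefore
\[
\dimh\Big(\limsup_n h_n\Big) \ge k-k_0+s = \min_{1\le j\le k_0}\left\{\frac{k_0 + j a_j - \sum_{i=1}^{j} a_i}{a_j} + k - k_0\right\},
\]
as claimed. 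The main obstacle is not a deep new idea but the bookkeeping in the middle paragraph: checking that, in a typical slice, the trace of $\limsup_n H_n$ is genuinely a $\limsup$ set of \emph{balls} of full measure (so that the Wang--Wu--Xu transference applies verbatim), that passing to the subsequence $(n_j)$ loses no mass, and that $h_n\cap W$ is exactly the rectangle $B_j^{\veca}$ attached to $B_j$; the equality of the first $k_0$ radii is what makes this possible. The remaining points (measurability for Fubini, Borel-ness of $A$, positivity of $s$) are routine.
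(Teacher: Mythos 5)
Your proposal follows essentially the same route as the paper: slice along the first $k_0$ coordinates, use the hypothesis $r_n^1=\dots=r_n^{k_0}$ to recognise the traces $H_n\cap W$ as balls and $h_n\cap W$ as the associated Wang--Wu--Xu rectangles, apply Theorem \ref{WWX Theorem 1.3} within each almost-every slice, and finish with Lemma \ref{Slicing Lemma} with $l=k-k_0$. The argument is correct, and your extra bookkeeping (identifying the subsequence $(n_j)$, checking $s>0$ and the dimension-function hypotheses of the Slicing Lemma) only makes explicit what the paper leaves implicit.
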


\begin{proof}
Let $V = \{\x = (x_1,x_2, \dots, x_k) \in [0,1]^k: x_i = 0 \text{ for all } i \geq k_0 + 1\}$. Since $|\limsup_{n \to \infty}{H_n}| = 1$, for almost every \[\vecb \in \{\x = (x_1, x_2, \dots, x_k) \in [0,1]^k: x_i = 0 \text{ for all } i \leq k_0\}\] we have
\[ |(V + \vecb) \cap \limsup_{n \to \infty}{H_n}| = 1.\]
Let us fix a $\vecb$ for which this holds and let $W = V + \vecb$. As before, $\limsup_{n \to \infty}{H_n} \cap W$ can be written as a sequence of $k_0$-dimensional balls $B_j = B(\x_{n_j}^{k_0},r_{n_j}^1)$ with radii $r_{n_j}^1$($=r_{n_j}^2 = \dots = r_{n_j}^{k_0}$) and centres $\x_{n_j}^{k_0} = (x_{n_j,1}, x_{n_j,2},\dots,x_{n_j,k_0})$. Note that $|\limsup_{j \to \infty}{B_j} \cap W| = 1$.

This time, for each $j$ let 
\[b_j = \prod_{i=1}^{k_0}{B(x_{n_j,i},(r_{n_j}^i)^{a_i})}\] 
and note that 
\[\limsup_{j \to \infty}{b_j} \cap W = \limsup_{n \to \infty}{h_n} \cap W.\] 
By Theorem \ref{WWX Theorem 1.3} it follows that
\[\cH^s(\limsup_{n \to \infty}{h_n} \cap W) = \infty\]
where
\[s := \min_{1 \leq j \leq k_0}\left\{\frac{k_0 + ja_j - \sum_{i=1}^{j}{a_i}}{a_j}\right\}.\]
Since this is the case for almost every \[\vecb \in \{\x = (x_1, x_2, \dots, x_k) \in [0,1]^k: x_i = 0 \text{ for all } i \leq k_0\}\] we may use Lemma \ref{Slicing Lemma} (with $l = k - k_0$) to conclude that 
\[\cH^{s'}(\limsup_{n \to \infty}{h_n}) = \infty\]
where 
\[s' := \min_{1 \leq j \leq k_0}\left\{\frac{k_0 + ja_j - \sum_{i=1}^{j}{a_i}}{a_j} + k - k_0\right\}.\]
Hence
\[\dimh\left(\limsup_{n \to \infty}{h_n}\right) \geq s',\]
as required.
\end{proof}

A disadvantage of using the ``slicing'' arguments above is that we have to impose quite strict conditions on both the original and transformed rectangles. Namely, the sides of the original rectangle which are permitted to ``shrink'' have to be of the same initial length (but can shrink at different rates). Meanwhile, the rest of the sides of the original rectangle are not allowed to ``shrink'' at all when passing to the corresponding transformed rectangle. We conclude this section by considering one more situation where all sides of the original rectangles may have different lengths and are all allowed to ``shrink'' in a specified manner.
Let 
\[H_n = \prod_{i=1}^{k}{B(\x_{n,i}, r_n^{t_i})}\]
be a sequence of rectangles in $[0,1]^k$ with $1 \leq t_i$ for $1 \leq i \leq k$. 

Let the corresponding ``shrunk'' rectangles be defined as
\[ h_n = \prod_{i = 1}^{k}{B(\x_{n,i},r_n^{a_it_i})},\]
where $1 \leq a_i$ for $1 \leq i \leq k$. Suppose without loss of generality that $1 \leq a_1t_1 \leq a_2t_2 \leq \dots \leq a_kt_k$. 

By using the ``natural'' covers of $\limsup_{n \to \infty}{h_n}$ we can get an upper bound for the Hausdorff dimension of this $\limsup$ set; namely, we see that
\begin{equation}\label{eq:upperbound}
\dimh\left(\limsup_{n \to \infty}{h_n}\right) \leq \min_{1 \leq j \leq k}\left\{\frac{\sum_{i=1}^{k}{t_i + ja_jt_j} - \sum_{i=1}^{j}{a_it_i}}{a_jt_j}\right\}. 
\end{equation}

\begin{problem}
Under what conditions do we get a lower bound which coincides with the upper bound given above?
\end{problem} 

\begin{remark*}
Throughout this section we have only considered $\limsup$ sets of rectangles which are all aligned. It would also be natural to consider situations where this is not necessarily the case.
\end{remark*}

\section{Random Mass Transference Principles}\label{sect:randomMTP}
It is a well known phenomenon that introducing randomness to a construction can simplify results by ``smoothing'' out almost impossible values in the probability space that cause problems in deterministic settings.
In this section we will summarise recent progress on random analogues of the statements presented in the preceding sections. We note that the assumptions required are much weaker but with the caveat that randomness has to be introduced somewhere and precise number theoretic results cannot be recovered. The random covering sets that we will mention, as well as the random and deterministic sets we will relate to $\limsup$ sets, have a long history of their own. 
While we highlight their connection to the $\limsup$ sets mentioned in the previous sections and focus on their similarities, we note that the methods used in their proofs differ quite substantially.

We first consider a problem known as the (random) moving target problem. Let $(X,\mu)$ be a probability space, where $X$ is a complete metric space. Let $\{B_{i}\}_{i\in\N}=\{B(x,r_{i})\}_{i\in\N}$ be a sequence of balls centred at $x\in X$ such that $r_{i}\to 0$ as $i\to\infty$. We are interested in the following question.
\begin{problem}
Let $\{\widetilde B_{i}\}_{i\in\N}=\{B(x+a_i,r_{i})\}_{i \in \N}$ be a sequence of balls with random centres $x+a_i$, where $a_{i} \in X$ are chosen independently according to the probability measure $\mu$. Under what conditions can we deduce a measure statement for the $\limsup$ set $E(B_{i})=\limsup_{i\to\infty}\widetilde B_{i}$?
\end{problem}

If $X=\T^{1}$ is the circle and $\mu$ is the uniform measure, one answer to that question should be familiar. It is the Borel--Cantelli Lemma.
\begin{lemma}[Borel--Cantelli Lemma]
Let $X=\T^{1}$ and let $\{B_{i}\}_{i\in\N}=\{B(x,r_{i})\}_{i \in \N}$ be a sequence of balls centred at $x\in X$ such that $r_{i}\to 0$ as $i\to\infty$. Let $(a_i)_{i \in \N}$ be a sequence of random translations chosen according to the uniform measure $\mu$. Then, we again consider $E(B_{i})=\limsup_{i\to\infty}\widetilde B_{i}$ and for almost every choice of sequence $(a_i)_{i \in \N}$ with respect to the product measure $\mu^\N$, we have
\[
\lvert E(B_{i})\rvert=\begin{cases}
0	&	\text{if }\sum_{i=1}^{\infty} r_{i}<\infty,\\
&\\
1	&	\text{if }\sum_{i=0}^{\infty} r_{i}=\infty.
\end{cases}
\]
\end{lemma}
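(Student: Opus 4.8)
The plan is to reduce the a.s.\ dichotomy for $|E(B_i)|$ to a pointwise (in $y\in\T^1$) application of the classical Borel--Cantelli lemma, and then to exchange the two ``almost every'' quantifiers with Tonelli's theorem. Write $\widetilde B_i=B(x+a_i,r_i)$ and note that, since the $a_i$ are i.i.d.\ uniform on $\T^1=\R/\Z$, the centres $x+a_i$ are i.i.d.\ uniform as well. Fix $y\in\T^1$ and, for each $i$, put $A_i(y)=\{(a_j)_{j\in\N}\in(\T^1)^{\N}:y\in\widetilde B_i\}$. Since $y\in B(x+a_i,r_i)\iff a_i\in B(y-x,r_i)$, the event $A_i(y)$ depends only on the coordinate $a_i$; hence the events $\{A_i(y)\}_{i\in\N}$ are mutually independent under $\mu^{\N}$, with $\Prob(A_i(y))=|B(y-x,r_i)|=\min\{2r_i,1\}$. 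Because $r_i\to 0$ we have $2r_i<1$ for all large $i$, so $\sum_i\Prob(A_i(y))$ converges if and only if $\sum_i r_i$ does, and this holds uniformly in $y$.

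Applying the classical Borel--Cantelli lemma in $((\T^1)^{\N},\mu^{\N})$: if $\sum_i r_i<\infty$, the convergence half gives $\Prob\big(y\in\widetilde B_i\text{ for infinitely many }i\big)=0$; if $\sum_i r_i=\infty$, the divergence half --- which is precisely where the independence of the $A_i(y)$, and hence the i.i.d.\ hypothesis on the $a_i$, is used --- gives $\Prob\big(y\in\widetilde B_i\text{ for infinitely many }i\big)=1$. Writing $E=E(B_i)=\limsup_{i\to\infty}\widetilde B_i$, we conclude that for \emph{every} fixed $y\in\T^1$ we have $\Prob(y\in E)=0$ in the convergent case and $\Prob(y\in E)=1$ in the divergent case.

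Finally, the set $\{((a_j)_j,y):y\in E\}=\bigcap_{k\ge 1}\bigcup_{i\ge k}\{((a_j)_j,y):\dist(a_i,y-x)<r_i\}$ is a measurable subset of $(\T^1)^{\N}\times\T^1$, so Tonelli's theorem yields
\[
\mathbb{E}_{\mu^{\N}}\big[\,|E|\,\big]=\mathbb{E}_{\mu^{\N}}\!\int_{\T^1}\mathbf{1}_{E}(y)\,d\mu(y)=\int_{\T^1}\Prob(y\in E)\,d\mu(y),
\]
which equals $0$ in the convergent case and $1$ in the divergent case. Since $0\le|E|\le 1$ always, $\mathbb{E}_{\mu^{\N}}[|E|]=0$ forces $|E|=0$ for $\mu^{\N}$-a.e.\ sequence $(a_i)$, while $\mathbb{E}_{\mu^{\N}}[|E|]=1$ forces $|E|=1$ for $\mu^{\N}$-a.e.\ sequence, which is exactly the asserted dichotomy. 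I do not expect a genuine obstacle: the only points needing care are the elementary identity $|B(z,r)|=\min\{2r,1\}$ together with $r_i\to 0$ (so that the tail of $\sum_i\Prob(A_i(y))$ is comparable to $\sum_i r_i$), the joint measurability required to invoke Tonelli, and keeping in mind that the divergence half of Borel--Cantelli genuinely relies on the independence furnished by the i.i.d.\ choice of the $a_i$ --- without it (e.g.\ for a deterministic sequence of translates) the divergence conclusion may fail.
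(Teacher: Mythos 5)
Your proof is correct, and it is the standard argument: fix $y$, apply the two halves of the classical Borel--Cantelli lemma to the independent events $A_i(y)$ (whose probabilities $\min\{2r_i,1\}$ are comparable to $r_i$ since $r_i\to 0$), and then swap the order of integration with Tonelli, using $0\le|E|\le 1$ to upgrade $\mathbb{E}[|E|]\in\{0,1\}$ to an almost sure statement. The paper states this lemma without proof, so there is nothing to compare against; the only remark worth adding is the one the paper itself makes immediately afterwards, namely that the convergence half actually holds \emph{surely} for every sequence $(a_i)$ by applying the measure-theoretic Borel--Cantelli lemma directly to Lebesgue measure on $\T^1$ (since $\sum_i|\widetilde B_i|\le\sum_i 2r_i<\infty$), whereas your route through the probability space only yields the almost sure version --- which is all the statement asks for.
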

Note that the first implication, \ie that the sum being finite implies zero Lebesgue measure, holds surely for any arbitrary sequence $(a_i)_{i\in \N}$. In particular,  the $a_i$ do not have to be chosen randomly.
Using randomness though, we can make a more precise statement about the Hausdorff dimension when the $\limsup$ set is Lebesgue null.

\begin{theorem}[Fan -- Wu~\cite{Fan04}, Durand~\cite{Durand10}]
Let $X=\T^{1}$, and let $\{B_{i}\}_{i \in \N}$ be a sequence of balls with radii $r_i$ such that $r_{i}\to0$ as $i\to\infty$. Given this sequence of radii, assume that $\lvert E(B_{i})\rvert=0$ for almost every sequence of uniformly chosen translations $(a_{i})_{i\in\N}\subset\T^{1}$. Then, for almost all sequences of random translations, 
\[
\dim_{H}E(B_{i})=\min\{1,\,s_{0}\},
\]
where
\begin{equation}\label{eqn:convergenceofsum}
s_{0}=\inf\left\{s>0 \;:\; \sum_{i=0}^\infty r_{i}^{s}<\infty\right\}.
\end{equation}
\end{theorem}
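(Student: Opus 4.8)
The plan is to prove the two inequalities $\dim_H E(B_i) \le \min\{1, s_0\}$ and $\dim_H E(B_i) \ge \min\{1, s_0\}$ separately, with only the lower bound requiring the randomness. The upper bound holds surely for every choice of translations: since $E(B_i) \subseteq \bigcup_{i \ge N} \widetilde B_i$ for every $N$, for any $s > s_0$ we have $\sum_i r_i^s < \infty$ by the definition of $s_0$, and covering $E(B_i)$ by the tails of $\{\widetilde B_i\}$ gives $\cH^s(E(B_i)) \le \liminf_N \sum_{i \ge N} (2r_i)^s = 0$. Hence $\dim_H E(B_i) \le s_0$ surely, and trivially $\dim_H E(B_i) \le 1$ since we are in $\T^1$, giving the upper bound $\dim_H E(B_i) \le \min\{1, s_0\}$.

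\textbf{Lower bound.} First I would reduce to the case $s_0 \le 1$: the hypothesis is that $|E(B_i)| = 0$ almost surely, and by the Borel--Cantelli-type dichotomy for random translations this forces $\sum_i r_i < \infty$; conversely if $s_0 < 1$ then certainly $\sum r_i^s < \infty$ for $s$ near $1$, so the interesting regime is $s_0 \le 1$ and we must show $\dim_H E(B_i) \ge s_0$ almost surely. Fix $s < s_0$, so that $\sum_i r_i^s = \infty$. The strategy is the standard one for limsup sets: build a Cantor-like subset $F \subseteq E(B_i)$ supported by a probability measure $\nu$ satisfying a Frostman-type energy or local-mass bound $\nu(B(x, r)) \lesssim r^{s-\epsilon}$, so that the mass distribution principle yields $\cH^{s-\epsilon}(E(B_i)) > 0$ and hence $\dim_H E(B_i) \ge s - \epsilon$; letting $\epsilon \to 0$ and $s \to s_0$ along rationals finishes the proof on a single full-measure event (intersection of countably many).

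\textbf{The main obstacle} is controlling, for a typical realization of the translations $(a_i)$, how the randomly placed balls $\widetilde B_i$ overlap as one descends through the levels of the Cantor construction: one needs that at each stage enough of the surviving children balls are ``well separated'' so that the natural measure on the limiting set does not concentrate. The cleanest route is to invoke a quantitative second-moment / Erd\H{o}s--R\'enyi-type covering estimate for the random cover $\bigcup_{i} \widetilde B_i$ — this is exactly where the Fan--Wu and Durand analyses diverge in technique (Fan--Wu use a direct Cantor construction together with large-deviation control of the number of balls hitting a given dyadic interval, while Durand's argument goes through his general theory of homogeneous and inhomogeneous ubiquity for random limsup sets). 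I would follow Durand's framework: verify that the family $\{\widetilde B_i\}$ forms, almost surely, a ``uniform system'' in the sense of his ubiquity theorems with the exponent governed by $s_0$, which requires checking a counting estimate of the form
\[
\#\{i \le n : \widetilde B_i \cap I \neq \emptyset\} \asymp |I| \sum_{i \le n} (\text{something in } r_i)
\]
concentrating around its mean for all dyadic intervals $I$ simultaneously, via Borel--Cantelli after a union bound over the (polynomially many) intervals at each dyadic scale. Once the ubiquity hypothesis is in place, the lower bound $\dim_H E(B_i) \ge s_0$ is a direct citation of the random ubiquity machinery, and combined with the sure upper bound above this gives $\dim_H E(B_i) = \min\{1, s_0\}$ almost surely.
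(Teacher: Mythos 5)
The paper states this result without proof (it is quoted from Fan--Wu \cite{Fan04} and Durand \cite{Durand10}), so there is no in-text argument to compare against; I will assess your plan on its own terms. Your upper bound is complete and correct: tail covers give $\cH^{s}(E(B_i))=0$ surely for every $s>s_0$, and the reduction to $s_0\le 1$ via the Borel--Cantelli dichotomy stated just before the theorem is also right. For the lower bound, your overall strategy (a Frostman measure on a Cantor subset, or a ubiquity argument) is indeed the one used in the cited sources, but the step that carries all of the difficulty --- the almost sure concentration estimate for the number of random balls meeting each dyadic interval, uniformly over all scales --- is left as an appeal to ``random ubiquity machinery'', and as written the estimate is not pinned down (the ``something in $r_i$'' matters: for ubiquity one needs not merely that many balls meet $I$ but that suitably enlarged, boundedly overlapping copies of them cover a fixed proportion of $I$). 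So the proposal correctly identifies the obstacle but does not resolve it.

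A shorter route, entirely within the toolkit of this survey, avoids that estimate. Fix $s<s_0\le 1$, so that $\sum_i r_i^{s}=\infty$. For each fixed $y\in\T^1$ the events $\{y\in B(x+a_i,r_i^{s})\}$ are independent with probabilities $\min\{2r_i^{s},1\}$ summing to infinity, so the divergence Borel--Cantelli lemma together with Fubini gives that, almost surely, $\limsup_{i}B(x+a_i,r_i^{s})$ has full Lebesgue measure, and hence full measure in every arc. Theorem \ref{mtp theorem} with $k=1$ and $f(r)=r^{s}$ (so that $B_i^{f}=B(x+a_i,r_i^{s})$; use Theorem \ref{general mtp theorem} if one prefers to work directly on $\T^1$ rather than lifting to $\R$) then yields $\cH^{s}(E(B_i))=\cH^{s}(\T^1)=\infty$, whence $\dimh E(B_i)\ge s$ on this full-probability event. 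Intersecting over a countable sequence $s\uparrow s_0$ completes the lower bound. This replaces your uniform counting estimate by a single application of the Mass Transference Principle, which seems to be the point of including this theorem in the survey.
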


Related to such statements are results in fractal geometry. We write $\mathcal{I}_{0}=\{T_{1},T_{2},\dots,T_{N}\}$ for a finite collection of contracting similarity maps on $\R^{k}$, \ie 
\[
\lVert T_{i}(x)-T_{i}(y)\rVert=c_{i}\lVert x-y\rVert
\] 
for some $0<c_{i}<1$ for each $1 \leq i \leq N$ for all $x,y\in\R^{k}$, where $\lVert.\rVert$ is the Euclidean norm. The ``best guess'' for the Hausdorff dimension of the unique compact invariant attractor $F\subset\R^{k}$ satisfying $F=\bigcup_{i}T_{i}(F)$ is the \emph{similarity dimension}. The similarity dimension is the unique exponent, $s_0$, satisfying the Hutchinson--Moran formula
\begin{equation}\label{eqn:HutchinsonMoran}
\sum_{i=1}^N c_{i}^{s_0}=1,
\end{equation}
see~\cite{Hutchinson81, Moran46}.
Its relation to (\ref{eqn:convergenceofsum}) can be seen by writing $F$ as a $\limsup$ set
\[
F=\bigcap_{l=1}^{\infty}\;\bigcup_{i=l}^{\infty}\;\bigcup_{j_{1},j_{2},\dots,j_{i}\in\{1,\dots, N\}}T_{j_{1}}\circ T_{j_{2}} \circ \dots \circ T_{j_{i}}(\Delta),
\]
where $\Delta=[-c,c]^{k}$ for some large enough $c\in\R$ such that $F\subseteq\Delta$. 
Notice that $\sum_{j}c_{j}^{s_0+\delta}<1$ for any $\delta>0$ and that $\diam(T_{j_{1}}\circ\dots\circ T_{j_{i}}(\Delta))=c_{j_{1}}\dots c_{j_{i}}c$. So,
\begin{align}
\sum_{i=1}^{\infty}\;\sum_{j_{1},j_{2},\dots,j_{i}\in\{1,\dots, N\}}\diam(T_{j_{1}}&\circ T_{j_{2}} \circ \dots \circ T_{j_{i}}(\Delta))^{s_0+\delta}\nonumber\\
&=c^{s_0+\delta}\sum_{i=1}^{\infty}\;\sum_{j_{1},j_{2},\dots,j_{i}\in\{1,\dots, N\}}c_{j_{1}}^{s_0+\delta}c_{j_{2}}^{s_0+\delta}\dots c_{j_{i}}^{s_0+\delta}\nonumber\\
&=c^{s_0+\delta}\sum_{i=1}^{\infty}\left(\sum_{j}c_{j}^{s_0+\delta}\right)^{i}<\infty\label{eq:selfsimlongeq}
\end{align}
using additivity. Similarly, if $\delta<0$ the sum above diverges and the similarity dimension $s_0$ in (\ref{eq:selfsimlongeq}) coincides with the expression in (\ref{eqn:convergenceofsum}). We would typically expect the similarity dimension to coincide with the Hausdorff dimension for these sets, but this is not true in general in the deterministic setting and randomisation is one mechanism by which one can get an almost sure equality. We refer the reader to the wide literature on dimension theory of random and deterministic attractors~\cite{Techniques, Falconer ref, Mattila ref}, see also~\cite{Troscheit15} for an overview of self-similar random sets.

Naturally, one is interested in higher dimensional analogues and relaxing the conditions on the covering set $E(B_{i})$. 
Let $X=\T^{k}$  and let $\Delta\subset[0,1]^{k}$ have non-empty interior. Let $T_{i}:\R^{k}\to\T^{k}$ be a linear contraction with singular values $\sigma_{1}(T_{i})\geq \sigma_{2}(T_{i})\geq \dots\geq\sigma_{k}(T_{i})$. Recall that $\sigma_{j}(T_{i})$ is the length of the $j^{\text{th}}$ longest principal semi-axis of the ellipsoid $T_{i}(B(\0,1))$. We define the \emph{singular value function} $\Phi^{t}(T_{i})$ by 
\[
\Phi^{t}(T_{i})=
\begin{cases}
\sigma_{1}(T_{i})\sigma_{2}(T_{i})\dots\sigma_{n}(T_{i})^{t-n+1} & \text{ for }n\leq t+1<n+1 \text{ and }t<k,\\\\
\sigma_{1}(T_{i})\sigma_{2}(T_{i})\dots\sigma_{k}(T_{i})^{t} & \text{ for }t\geq k.
\end{cases}
\]

The Hausdorff dimension of the natural $\limsup$ set appearing in this setting is related to the behaviour of the singular value function.

\begin{theorem}[J\"arvenp\"a\"a -- J\"arvenp\"a\"a -- Koivusalo -- Li -- Suomala~\cite{Jarvenpaa14b}]
Let $(T_{i})_{i\in\N}$ be a sequence of maps as above with $\sigma_{j}(T_{i})\to 0$ as $i\to \infty$ for all $j$. Set 
\[
E(T_{i}):=\limsup_{i}(T_{i}(\Delta)+a_{i}),
\]
where $a_{i}\in\T^{k}$ is a translation chosen independently according to the Lebesgue measure on $\T^{k}$. Then, almost surely,
\begin{equation}\label{eq:affineinfy}
\dim_{H}E(T_{i})=\inf\left\{0<t\leq k \;:\; \sum_{i=1}^{\infty}\Phi^{t}(T_{i})<\infty\right\}.
\end{equation}
\end{theorem}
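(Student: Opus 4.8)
The plan is to prove the two inequalities $\dim_H E(T_i) \leq s_0$ and $\dim_H E(T_i) \geq s_0$ separately, where $s_0 := \inf\{0 < t \leq k : \sum_i \Phi^t(T_i) < \infty\}$ is the right-hand side of \eqref{eq:affineinfy}; the first bound holds surely by a deterministic covering argument, while the second is where the randomness and the torus structure are essential. We may assume $s_0 < k$, since the case $s_0 = k$ means $\sum_i \Phi^k(T_i) = \infty$, i.e. $\sum_i |T_i(\Delta)| = \infty$, and then a standard second-moment (Kochen--Stone) estimate together with ergodicity of the shift gives $|E(T_i)| = 1$, hence $\dim_H E(T_i) = k$, almost surely.

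For the upper bound I would use the natural covers of $E(T_i)$. Fix $t > 0$ with $\sum_i \Phi^t(T_i) < \infty$. For every $N$ one has $E(T_i) \subseteq \bigcup_{i \geq N}(T_i(\Delta) + a_i)$, and since $\Delta$ is bounded, $T_i(\Delta) + a_i$ lies in an ellipsoid with semi-axes comparable to $\sigma_1(T_i) \geq \cdots \geq \sigma_k(T_i)$. With $m$ the integer satisfying $m \leq t+1 < m+1$, such an ellipsoid is covered by $\asymp \prod_{j < m}\bigl(\sigma_j(T_i)/\sigma_m(T_i)\bigr)$ balls of radius $\sigma_m(T_i)$, whose total $t$-dimensional cost is $\asymp \sigma_1(T_i)\cdots\sigma_{m-1}(T_i)\,\sigma_m(T_i)^{\,t-m+1} = \Phi^t(T_i)$. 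Because $\sigma_j(T_i) \to 0$, the diameters of these covers tend to $0$ as $N \to \infty$, so summing over $i \geq N$ and letting $N \to \infty$ shows $\cH^t(E(T_i)) = 0$; hence $\dim_H E(T_i) \leq t$, and taking $t \downarrow s_0$ along admissible values gives $\dim_H E(T_i) \leq s_0$ surely.

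For the lower bound, fix any $t$ with $0 < t < s_0$, so that $\sum_i \Phi^t(T_i) = \infty$. Since the $\limsup$ set $E(T_i)$ does not depend on any finite initial segment of $(a_i)_{i\in\N}$, the (measurable) quantity $\dim_H E(T_i)$ is tail-measurable, so by Kolmogorov's zero--one law the event $\{\dim_H E(T_i) \geq t\}$ has probability $0$ or $1$, and it suffices to show it has positive probability. I would do this by constructing, with positive probability, a random Borel probability measure $\mu$ supported on $E(T_i)$ obeying a Frostman bound $\mu(B(x,r)) \lesssim r^{t'}$ for every fixed $t' < t$ and all $x, r$, and then concluding $\dim_H E(T_i) \geq t'$ by the mass distribution principle. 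The measure would be built by a Cantor-type scheme: group the indices into successive blocks $\mathcal{B}_1, \mathcal{B}_2, \dots$ so that the ellipsoids $T_i(\Delta)$, $i \in \mathcal{B}_{n+1}$, are much smaller than those with $i \in \mathcal{B}_n$ while $\sum_{i \in \mathcal{B}_n} \Phi^t(T_i)$ remains large (possible since the full series diverges); inside a surviving generation-$n$ ellipsoid one retains those translates $T_i(\Delta) + a_i$, $i \in \mathcal{B}_{n+1}$, falling inside it with bounded overlap (using that $\Delta$ has nonempty interior, so $T_i(\Delta)$ genuinely has the shape of an ellipsoid with axes $\asymp \sigma_j(T_i)$), and spreads the parent's mass among them with an anisotropic weighting reflecting the ellipsoid shapes. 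Using independence and uniform distribution of the $a_i$ on $\T^k$, a second-moment (Paley--Zygmund) estimate for the number of generation-$(n+1)$ ellipsoids landing inside a fixed generation-$n$ ellipsoid --- which is precisely where $\sum_i \Phi^t(T_i) = \infty$ enters --- shows that with positive probability the scheme survives to all generations, yielding a non-trivial limit measure $\mu$.

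The main obstacle is the anisotropy packaged in the singular value function. Since each $T_i(\Delta)$ can be an extremely elongated ellipsoid, $\Phi^t(T_i)$ interpolates between several integer-dimensional regimes, and the Frostman bound $\mu(B(x,r)) \lesssim r^{t'}$ has to be verified \emph{uniformly over all scales} $r$ --- in particular at the intermediate scales lying strictly between two consecutive singular values $\sigma_m(T_i) < r < \sigma_{m-1}(T_i)$, where a ball of radius $r$ sees a generation ellipsoid only in $m$ of its $k$ directions. Arranging the Cantor construction so that, measured by $\mu$, each generation ellipsoid looks like an $r^{t'}$-regular set in the relevant sub-box at every such scale --- i.e. matching the multi-scale geometry of the construction to the piecewise definition of $\Phi^t$ --- is the technical crux. (Alternatively one can bound the expected Riesz $t'$-energy $\mathbb{E}\iint |x-y|^{-t'}\,d\mu(x)\,d\mu(y)$ of a candidate random measure and invoke the potential-theoretic characterisation of dimension, but the same anisotropic multi-scale estimate resurfaces inside the energy integral.) Finally, letting $t \uparrow s_0$ along a countable sequence and intersecting the corresponding probability-one events yields $\dim_H E(T_i) \geq s_0$ almost surely, which with the upper bound gives \eqref{eq:affineinfy}.
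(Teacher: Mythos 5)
This theorem is only quoted in the survey (it is a result of J\"arvenp\"a\"a--J\"arvenp\"a\"a--Koivusalo--Li--Suomala cited from the literature), so there is no in-paper proof to compare against; I am therefore judging your proposal against the argument in the cited reference. Your upper bound is correct and is the standard one: the covering of each ellipsoid by $\asymp\prod_{j<m}\sigma_j(T_i)/\sigma_m(T_i)$ balls of radius $\sigma_m(T_i)$ at $t$-cost $\asymp\Phi^t(T_i)$, summed over a tail, is exactly how this direction is done, and it is indeed deterministic. The use of Kolmogorov's zero--one law to reduce the lower bound to a positive-probability statement is also sound, since $E(T_i)$ is unchanged by altering finitely many $a_i$.

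The genuine gap is in the lower bound, and it sits precisely where you flag it. Everything that makes the theorem true --- the appearance of $\Phi^t$ rather than, say, $\prod_j\sigma_j(T_i)^{t/k}$ --- is concentrated in the anisotropic estimate you defer: verifying a Frostman bound $\mu(B(x,r))\lesssim r^{t'}$ at the intermediate scales $\sigma_m(T_i)<r<\sigma_{m-1}(T_i)$, or equivalently bounding the expected Riesz $t'$-energy of the candidate measure. As written, your proposal is a programme whose decisive step is named but not carried out. The known way to close it is Falconer's singular value function lemma (from his 1988 self-affine paper): for non-integral $s\in(0,k)$ there is a constant $c$ with
\[
\int_{\|a\|\le\rho}\frac{da}{\|Tx+a\|^{s}}\;\le\;\frac{c}{\Phi^{s}(T)}
\]
uniformly in $x$ with $\|x\|$ bounded; integrating the energy of a normalised weighted sum of the measures carried by the $T_i(\Delta)+a_i$ against the uniform distribution of the translations converts the diagonal terms into quantities controlled by $\sum_i\Phi^t(T_i)$, which is where divergence enters. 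This is essentially the route the original authors take (after reducing ellipsoids to rectangles and passing to a suitable subfamily of indices), and it is the ingredient your outline must supply before it is a proof rather than a strategy. Two smaller points: your claim that $s_0=k$ forces $\sum_i\Phi^k(T_i)=\infty$ is not valid (the infimum defining $s_0$ can be attained at $k$ with the sum convergent), though the case is harmless because the general lower bound for every $t<s_0$ covers it; and the ``bounded overlap'' selection of children inside a parent ellipsoid cannot simply be asserted --- overlaps of the random translates must themselves be controlled by a second-moment argument.
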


In particular the sets can now be chosen to be rectangles, as opposed to balls. 
Indeed, even in the deterministic setting considered by Wang, Wu and Xu~\cite{WWX ref} the expression they obtain, namely (\ref{eq:critValue}), coincides with (\ref{eq:affineinfy}). We see this expression appearing yet again in the upper bound (\ref{eq:upperbound}).

The singular value function was first used by Falconer in determining the Hausdorff dimension of self-affine sets~\cite{Falconer88}. Recall that a map is affine if it can be written as $\mathbf{M}x+\mathbf{v}$, for some non-singular matrix $\mathbf{M}\in\R^{k\times k}$ and some vector $\mathbf{v}\in\R^k$. Analogously to the self-similar case, if one considers the unique compact attractor $F$ of a finite collection $\mathcal{I}$ of affine contractions, the ``best guess'' for the Hausdorff dimension is the \emph{affinity dimension} given by the unique value $s\geq 0$ such that
\[
\sum_{T\in\mathcal{I}}\Phi^{s}(T)=1.
\]
In the case where we are given fixed maps and randomly chosen translation vectors the Hausdorff dimension and affinity dimension do coincide, see Falconer~\cite{Falconer88}. More recently, it was shown by B\'ar\'any, K\"aenm\"aki and Koivusalo~\cite{Barany16} that one could alternatively randomise the matrices defining the maps while keeping translation vectors fixed. The problem of determining exact conditions under which self-affine sets have Hausdorff dimension equal to the affinity dimension is still open and much progress has been made towards resolving it; see a recent survey by Falconer~\cite{Falconer survey} and \cite{Falconer15, Kaenmaki16, Morris16} (and references within) for the deterministic setting, and \cite{Fraser16, Gatzouras94, Gui10, Jarvenpaa15, Jordan15, Jordan06, Luzia11, Troscheit15a} for the random setting.

Dropping the linearity of the maps, $T_{i}$, Persson~\cite{Persson15} proved a lower bound for  the Hausdorff dimension of $\limsup$ sets of open sets.
\begin{theorem}[Persson~\cite{Persson15}]
Let $(A_i)_{i \in \N}$ be a sequence of open sets in $\T^k$. Let $V$ be the Riemannian volume on $\T^{k}$ and let
\[
g_{s}(A_{i})=\frac{\lvert A_{i}\rvert^{2}}{\mathcal{E}^{s}(A_{i})}, \quad \text{where }\;\;\;\; \mathcal{E}^{s}(A_{i})=\iint_{A_{i}\times A_{i}}\frac{dV(x)\,dV(y)}{\lvert x-y\rvert^{s}}
\]
is the $s$-energy of $A_{i}$. Then, for the $\limsup$ set $E(A_i)$ we obtain,
\[
\dim_{H}E(A_{i})\geq \inf\left\{0<s\leq k\;:\; \sum_{i=1}^{\infty}g_{s}(A_{i})<\infty\right\}.
\]
\end{theorem}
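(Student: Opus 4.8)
The plan is to run the classical energy (Frostman) method in the random setting $E(A_i)=\limsup_i(A_i+a_i)$, where the $a_i$ are independent and uniformly distributed on $\T^k$ (as in the preceding theorem). Set $s_{0}:=\inf\{0<s\le k:\sum_i g_s(A_i)<\infty\}$. It suffices to show that for each fixed $t<s_{0}$ one has $\dim_H E(A_i)\ge t$ almost surely, and then let $t\uparrow s_{0}$ through a countable sequence. By the definition of $s_{0}$ we have $\sum_i g_t(A_i)=\infty$ for every such $t$. The strategy is to construct, almost surely, a nonzero Borel measure supported on $E(A_i)$ with finite $t$-energy $\mathcal{I}_t(\nu):=\iint|x-y|^{-t}\,d\nu(x)\,d\nu(y)$, from which $\dim_H E(A_i)\ge t$ follows by the energy method (see e.g.\ \cite{Falconer ref,Mattila ref}).

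For each $i$ put $\lambda_i:=|A_i|^{-1}\mathbf{1}_{A_i+a_i}\,dV$, a probability measure supported on $A_i+a_i$. Because the Riesz kernel $z\mapsto|z|^{-t}$ is translation invariant on $\T^k$, the self-energy is deterministic: $\mathcal{I}_t(\lambda_i)=\mathcal{E}^t(A_i)/|A_i|^{2}=g_t(A_i)^{-1}$, and it is finite since $t<k$ and $A_i$ is a nonempty bounded open set. The essential point is the cross term: for $i\ne i'$ the difference $a_i-a_{i'}$ is uniformly distributed on $\T^k$, so by Fubini $\mathbb{E}\big[\iint|x-y|^{-t}\,d\lambda_i\,d\lambda_{i'}\big]=\int_{\T^k}|z|^{-t}\,dV(z)=:\kappa_t<\infty$, a constant independent of $i,i'$ and of the shapes of the $A_i$. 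Since $\sum_i g_t(A_i)=\infty$ while $\sup_i g_t(A_i)<\infty$, we may partition $\N$ into consecutive finite blocks $I_1<I_2<\cdots$ with $G_j:=\sum_{i\in I_j}g_t(A_i)\to\infty$, and define
\[
\nu_j:=\sum_{i\in I_j}\frac{g_t(A_i)}{G_j}\,\lambda_i,
\]
a probability measure supported on $\bigcup_{i\in I_j}(A_i+a_i)\subseteq\bigcup_{i\ge\min I_j}(A_i+a_i)$. Expanding the energy and inserting the two identities above gives
\[
\mathbb{E}\big[\mathcal{I}_t(\nu_j)\big]=\frac{1}{G_j^{2}}\sum_{i\in I_j}g_t(A_i)+\kappa_t\!\!\sum_{\substack{i,i'\in I_j\\ i\ne i'}}\!\!\frac{g_t(A_i)g_t(A_{i'})}{G_j^{2}}\le\frac{1}{G_j}+\kappa_t .
\]

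It remains to extract an almost sure statement and to identify the support of a limit measure. By Fatou's lemma, $\mathbb{E}\big[\liminf_j\mathcal{I}_t(\nu_j)\big]\le\liminf_j\mathbb{E}[\mathcal{I}_t(\nu_j)]\le\kappa_t<\infty$, so almost surely $\liminf_j\mathcal{I}_t(\nu_j)<\infty$. Fixing such a sample point, choose a subsequence along which the energies stay bounded and then a further subsequence along which $\nu_j$ converges weak${}^{*}$ to a probability measure $\nu$ on the compact space $\T^k$; lower semicontinuity of the energy gives $\mathcal{I}_t(\nu)<\infty$, and since $\min I_j\to\infty$ the measure $\nu$ is supported on $\bigcap_n\overline{\bigcup_{i\ge n}(A_i+a_i)}$. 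The main obstacle is precisely that this set can a priori be strictly larger than $E(A_i)=\bigcap_n\bigcup_{i\ge n}(A_i+a_i)$, so the constructed measure must be shown to actually live on the genuine $\limsup$ set. This is handled by a standard approximation argument: replace each $A_i$ by a slightly smaller open set $A_i'\Subset A_i$ with $g_t(A_i')\ge\tfrac12 g_t(A_i)$ (preserving the divergence), run the construction with the $A_i'$, and control the discrepancy between $\bigcap_n\overline{\bigcup_{i\ge n}(A_i'+a_i)}$ and $\bigcap_n\bigcup_{i\ge n}(A_i+a_i)$ (after reducing, if necessary, to the case $\operatorname{diam}(A_i)\to0$, in which the closures add nothing outside the $\limsup$). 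With $\nu$ thus supported on $E(A_i)$ and of finite $t$-energy, the energy method yields $\dim_H E(A_i)\ge t$, and letting $t\uparrow s_{0}$ completes the proof.
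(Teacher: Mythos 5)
Your first-moment computation is correct and is exactly how Persson's argument begins: the self-energy identity $\mathcal{I}_t(\lambda_i)=g_t(A_i)^{-1}$, the cross-term expectation $\kappa_t=\int_{\T^k}\lvert z\rvert^{-t}\,dV(z)<\infty$ for $t<k$, and the block measures $\nu_j$ with $\mathbb{E}[\mathcal{I}_t(\nu_j)]\le G_j^{-1}+\kappa_t$ all appear there (and $\sup_i g_t(A_i)<\infty$ does hold, since $\mathcal{E}^t(A_i)\ge(\diam\T^k)^{-t}\lvert A_i\rvert^2$). The gap is in your final step, and it is the step carrying essentially all of the difficulty. A weak$^*$ limit of the $\nu_j$ is supported only on $\bigcap_n\overline{\bigcup_{i\ge n}(A_i+a_i)}$, and the proposed repair --- shrink each $A_i$ to $A_i'\Subset A_i$ and reduce to $\diam(A_i)\to 0$, on the grounds that the closures then ``add nothing outside the limsup'' --- rests on a false claim. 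Take $A_i=B(c_i,2^{-i})$: the translated centres $c_i+a_i$ form an i.i.d.\ uniform, hence almost surely dense, sequence, so every tail union $\bigcup_{i\ge n}(A_i'+a_i)$ is dense and $\bigcap_n\overline{\bigcup_{i\ge n}(A_i'+a_i)}=\T^k$, while $\limsup_i(A_i+a_i)$ has Hausdorff dimension $0$. A point can accumulate on infinitely many of the shrunken sets without lying in any of the original ones, however small their diameters, because nothing forces $\dist(x,A'_{i_m}+a_{i_m})$ to be smaller than the gap $\dist(A'_{i_m},\complement A_{i_m})$. So the set your limit measure lives on can exceed the genuine limsup set by a full-dimensional amount, and you have not shown the limit measure gives zero mass to that excess.

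The bridge Persson actually uses is a Frostman-type lemma for limsup sets of \emph{open} sets, due to Persson and Reeve: if the open sets $U_n=\bigcup_{i\ge n}(A_i+a_i)$ carry probability measures of uniformly bounded $t$-energy which moreover converge weakly to Lebesgue measure (a property of the $\nu_j$ that holds almost surely in the random translation model but would also need to be proved), then one constructs a nested sequence of compact sets $K_1\supset K_2\supset\cdots$ with $K_n\subset U_n$, each supporting a measure with controlled $t$-energy, whose intersection is a compact subset of the true limsup set with $\mathcal{H}^t$-measure positive. The openness of each $U_n$ is what permits localising the measures to compact pieces without ever leaving the limsup set, and no weak$^*$ limit of the $\nu_j$ themselves is taken. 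This lemma (equivalently, Falconer's large-intersection machinery) is the technical heart of the proof and is the ingredient missing from your proposal.
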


Now consider the following general set up. Let $U$ and $V$ be open subsets of $\R^k$ and let $T:U\times V\to\R^k$ be a $C^1$ map such that $T(\cdot, y):U\to\R^k$ and $T(x,\cdot):V\to\R^k$ are diffeomorphisms for all $x\in U$ and $y\in V$. Let $D_1 T$ and $D_2 T$ be the derivatives of $T(\cdot, y)$ and $T(x,\cdot)$, respectively. Assume that 
\begin{equation}\label{eq:distortionCond}
\lVert D_i T(x,y)\rVert\leq C_u\;\;\text{ and }\;\;\lVert (D_i T(x,y))^{-1}\rVert\leq C_u
\end{equation}
for some uniform $C_u>0$ and all $i\in\{1,2\}$. 
Let $(A_i)_{i \in \N}$ be a sequence of subsets of $V$ and $(a_i)_{i \in \N}$ be a sequence of points in $U$. The function $T$ defines an interaction between a ``generalised translation'' $a_i$ and a set $A_i$ and embeds them without ``too much distortion'' into $\R^k$. Let $E(T, a_i, A_i)=\limsup_{i\to\infty} T(a_i,A_i)$.
Note that for $T(a_i,y)=x+a_i+y$ this is equivalent to the translates setting considered above.
Feng et al.~\cite{Feng15} proved a (random) mass transference type statement in this general set up.

\begin{theorem}[Feng -- J\"arvenp\"a\"a -- J\"arvenp\"a\"a -- Suomala~\cite{Feng15}]\label{thm:Feng1}
	Let $f$ be a dimension function and for each $i\in\N$ let $a_i \in U$ and let $A_i\subset\Delta\subset V$, where $\Delta$ is compact. Then
		\[
		\sum_{i=1}^\infty \Haus_\infty^f(A_i)<\infty \quad \text{implies} \quad \Haus^f (E(T,a_i,A_i))=0.
		\]
\end{theorem}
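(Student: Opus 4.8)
The plan is to prove the convergence (null-measure) part of a random mass transference principle by a direct covering argument, exploiting the definition of the Hausdorff content $\Haus_\infty^f$ together with the bi-Lipschitz-type bounds (\ref{eq:distortionCond}) on $T$. The key point is that $E(T,a_i,A_i) = \limsup_{i\to\infty} T(a_i, A_i)$, so for every $N \in \N$ the tail union $\bigcup_{i \geq N} T(a_i, A_i)$ is a cover of $E(T,a_i,A_i)$; it therefore suffices to show that the $f$-mass of this tail can be made arbitrarily small, which in turn follows once we know $\sum_{i=1}^\infty \Haus_\infty^f(T(a_i,A_i)) < \infty$ together with a doubling-type control of $f$.

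First I would fix $N \in \N$ and, for each $i \geq N$, choose an economical cover $\{U_{i,j}\}_j$ of $T(a_i, A_i)$ by sets with $\sum_j f(\diam U_{i,j}) \leq \Haus_\infty^f(T(a_i,A_i)) + 2^{-i}$. Concatenating these over all $i \geq N$ gives a countable cover of $E(T,a_i,A_i)$, whence
\[
\Haus^f_\delta\big(E(T,a_i,A_i)\big) \leq \sum_{i \geq N}\Big(\Haus_\infty^f(T(a_i,A_i)) + 2^{-i}\Big)
\]
for a suitable $\delta = \delta(N)$; there is a minor technical point that the $U_{i,j}$ must have diameter at most $\delta$, which one arranges by subdividing, using continuity of $f$ at $0$ to control the resulting loss. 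Letting $N \to \infty$ forces $\Haus^f(E(T,a_i,A_i)) = 0$, provided $\sum_i \Haus_\infty^f(T(a_i,A_i)) < \infty$.

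It remains to relate $\Haus_\infty^f(T(a_i,A_i))$ back to $\Haus_\infty^f(A_i)$, and this is where the hypothesis (\ref{eq:distortionCond}) enters: since $T(a_i,\cdot)$ is a diffeomorphism with derivative and inverse derivative bounded by $C_u$, the map $y \mapsto T(a_i,y)$ is Lipschitz with constant $\le C_u$ on the compact set $\Delta$, so it distorts diameters by at most the factor $C_u$. Taking an economical cover $\{V_j\}$ of $A_i$ and pushing it forward, $\{T(a_i,V_j)\}$ covers $T(a_i,A_i)$ with $\diam T(a_i,V_j) \le C_u \diam V_j$; feeding this into the definition of $\Haus_\infty^f$ and using that $f$ is non-decreasing and doubling (so $f(C_u r) \le \lambda^{\lceil \log_2 C_u\rceil} f(r)$) yields $\Haus_\infty^f(T(a_i,A_i)) \le c_f\, \Haus_\infty^f(A_i)$ for a constant $c_f$ depending only on $C_u$ and $\lambda$. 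Summing over $i$ converts the assumed convergence of $\sum_i \Haus_\infty^f(A_i)$ into convergence of $\sum_i \Haus_\infty^f(T(a_i,A_i))$, closing the argument.

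The main obstacle I anticipate is not the covering bookkeeping but the dependence of $f$ on the scaling: the statement as written only assumes $f$ is a dimension function, and the Lipschitz push-forward above genuinely needs a doubling (or at least polynomial-growth) property of $f$ to absorb the factor $C_u$. I would expect the actual argument either to invoke such a property explicitly, or to circumvent it by working with \emph{localised} covers at scales small enough that $C_u$-expansion stays within a single dyadic range where $f$ changes by a bounded factor — or, most cleanly, by covering $A_i$ first and only then noting that since $A_i \subset \Delta$ with $\Delta$ compact, all relevant diameters are uniformly bounded, so only the behaviour of $f$ on a bounded interval matters and a crude modulus-of-continuity estimate suffices. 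The role of randomness is essentially vacuous here: the hypothesis is deterministic and the conclusion holds for \emph{every} choice of $(a_i)$, which is exactly the convergence half of a Borel--Cantelli/MTP dichotomy.
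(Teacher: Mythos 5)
Your overall strategy --- treat this as the deterministic convergence half of a Borel--Cantelli dichotomy, cover $E(T,a_i,A_i)$ by the tails $\bigcup_{i\ge N}T(a_i,A_i)$, and transfer the content bound from $A_i$ to $T(a_i,A_i)$ via the Lipschitz bounds in (\ref{eq:distortionCond}) --- is the right one, and you are correct that randomness plays no role here (the survey states this result without proof, citing \cite{Feng15}, so the comparison is with the intended standard argument). However, two of the technical fixes you propose would fail as described. The first is the passage from a content bound to a bound on $\Haus^f_\delta$: you suggest subdividing the covering sets $U_{i,j}$ until their diameters are at most $\delta$ and controlling ``the resulting loss'' by continuity of $f$ at $0$. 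This does not work: chopping a set of diameter $D$ into pieces of diameter $\delta$ costs on the order of $(D/\delta)^k$ pieces, and $(D/\delta)^k f(\delta)$ is in general far larger than $f(D)$ (already for $f(r)=r^s$ with $s<k$ it blows up as $\delta\to 0$). The correct route avoids subdivision entirely: by countable subadditivity of the Hausdorff content, $\Haus_\infty^f(E)\le\sum_{i\ge N}\Haus_\infty^f(T(a_i,A_i))\to 0$ as $N\to\infty$, and for any dimension function $\Haus_\infty^f(F)=0$ implies $\Haus^f(F)=0$; equivalently, once the total $f$-mass of a cover is below $f(\delta)$, monotonicity of $f$ forces every covering set to have diameter less than $\delta$, so the cover is automatically $\delta$-admissible and no subdivision is ever needed.

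The second gap is the doubling issue you flag at the end. It is real if you push covers forward naively, but the resolution is neither to assume $f$ doubling nor to invoke a modulus of continuity on a bounded interval (neither yields the multiplicative comparison $f(C_u r)\le c\,f(r)$ uniformly as $r\to 0$). Instead, subdivide first and push forward second: a set $V_j\subset\R^k$ of diameter $d$ is covered by at most $N=N(k,C_u)$ sets of diameter $d/C_u$, whose images under the $C_u$-Lipschitz map $T(a_i,\cdot)$ have diameter at most $d$, whence
\[
\Haus_\infty^f\bigl(T(a_i,A_i)\bigr)\le N\sum_j f(\diam V_j)\le N\bigl(\Haus_\infty^f(A_i)+\eps_i\bigr),
\]
using only monotonicity of $f$ and the geometry of $\R^k$. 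Note the contrast with the first gap: subdividing by a \emph{fixed} linear factor costs a bounded number of pieces, whereas subdividing down to an arbitrary scale $\delta$ does not. With these two repairs (plus the routine verification that the derivative bound gives a genuine Lipschitz estimate on sets near the compact set $\Delta$ inside the open set $V$), your argument goes through.
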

Let $\mu$ be a measure on $U$ that is not entirely singular with respect to the Lebesgue measure (see \cite{Mattila ref} for a definition). We denote the natural product measure on all sequences with entries in $U$ by $\Prob=\mu^\N$ and now choose the sequence $(a_{i})_{i\in\N}$ according to $\Prob$. 
Let 
\[
G_f (F)=\sup\{g_f (L) \;:\; L\subset F\text{ and $L$ is Lebesgue measurable with }\lvert L\rvert>0\},
\]
where $g_f$ is the natural extension of $g_s$ to dimension functions $f$,
\[
g_{f}(A_{i})=\frac{\lvert A_{i}\rvert^{2}}{\mathcal{E}^{f}(A_{i})}, \quad \text{where }\;\;\;\; \mathcal{E}^{f}(A_{i})=\iint_{A_{i}\times A_{i}}\frac{dV(x)\,dV(y)}{f(\lvert x-y\rvert)}.
\]
\begin{theorem}[Feng -- J\"arvenp\"a\"a -- J\"arvenp\"a\"a -- Suomala~\cite{Feng15}]
	Suppose the same assumptions as in Theorem~\ref{thm:Feng1}.
	Provided that $\mathcal{E}^f(B(0,R))<\infty$ for all $R>0$ and the $A_i$ are Lebesgue measurable, then
		\[
		\sum_{i=1}^\infty G_f (A_i)=\infty \quad \text{implies} \quad \Haus^f (E(T,a_i,A_i))=\infty\;\;\text{ for }\;\;\Prob-a.e.\ (a_i)_{i \in \N} \in U^\N.
		\]
\end{theorem}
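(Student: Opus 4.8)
The plan is to establish the divergence statement by combining a second--moment construction of a random Frostman--type measure supported on the $\limsup$ set with Kolmogorov's zero--one law and the covering lemma that underlies the Mass Transference Principle. I would begin with two reductions. Since $G_f$ is a supremum over Lebesgue--measurable subsets of positive measure, for each $i$ choose $L_i\subseteq A_i$ with $|L_i|>0$ and $g_f(L_i)\ge\tfrac12\min\{G_f(A_i),\,i\}$; then $\sum_i g_f(L_i)=\infty$ and, since $E(T,a_i,L_i)\subseteq E(T,a_i,A_i)$, it suffices to prove the theorem with each $A_i$ replaced by $L_i$. Thus we may assume $\sum_i g_f(A_i)=\infty$, the $A_i$ being Lebesgue measurable of positive measure. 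For the second reduction, note that $E:=E(T,a_i,A_i)=\bigcap_{N}\bigcup_{i\ge N}T(a_i,A_i)$ is unchanged if finitely many $a_i$ are altered, so $E$ is measurable with respect to the tail $\sigma$--algebra of the independent sequence $(a_i)$; hence, for any ball $Q$, the event $\{\Haus^f(E\cap Q)=\infty\}$ is a tail event and, by Kolmogorov's zero--one law, has probability $0$ or $1$. Fixing a countable base $(Q_j)_{j\in\N}$ of balls, it therefore suffices to prove $\Prob(\Haus^f(E\cap Q_j)=\infty)>0$ for every $j$: intersecting the resulting full--probability events and using monotonicity of $\Haus^f$ then gives $\Haus^f(E\cap Q)=\infty$ for \emph{every} ball $Q$, and in particular $\Haus^f(E)=\infty$, almost surely.

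The core step is to show, for a fixed ball $Q$ contained in the region where the absolutely continuous part of $\mu$ lives, that with probability bounded below (uniformly over $Q$ in the base) one has $\Haus_\infty^f(E\cap Q)\gtrsim g_f(Q)\asymp\Haus_\infty^f(Q)$. To this end I would construct a random measure $\nu=\nu(\omega)$ as a weak subsequential limit of block--normalised partial sums of $w_i\,\mathbf 1_{T(a_i,A_i)\cap Q}\,dV$, with weights $w_i\asymp |A_i|\,\mathcal E^f(A_i)^{-1}$; the divergence $\sum_i w_i|A_i|\asymp\sum_i g_f(A_i)=\infty$ forces the normalising constants to blow up and thereby pushes the total mass of the weak limit onto the $\limsup$ set, so that $\operatorname{supp}\nu\subseteq E\cap\overline Q$ after a suitable tail--loading/diagonal argument. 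Using the non--distortion hypothesis \eqref{eq:distortionCond} and the fact that $\mu$ is not entirely singular with respect to Lebesgue measure, standard change--of--variables estimates give $\Prob(x\in T(a_i,A_i))\asymp|A_i|$ and an analogous two--point bound, whence $\mathbb E[\nu(Q)]\asymp|Q|$ and $\mathbb E[\nu(Q)^2]\lesssim|Q|^2$. The decisive estimate is that, splitting $\mathcal E^f(\nu)$ into same--$i$ and distinct--$i$ contributions, the former are controlled by the choice of $w_i$ (this is exactly where $\sum_i g_f(A_i)=\infty$ is used) while the latter, by independence of the $a_i$, reduce to the energy of the ambient ball, yielding
\[\mathbb E\big[\mathcal E^f(\nu)\big]\ \lesssim\ \iint_{Q\times Q}\frac{dV(x)\,dV(y)}{f(|x-y|)}\;+\;o(1)\ \lesssim\ \mathcal E^f(B(0,R))\ <\ \infty.\]
Now the Paley--Zygmund inequality gives $\nu(Q)\ge\tfrac12\mathbb E[\nu(Q)]$ with probability at least a fixed positive constant, and Markov's inequality gives $\mathcal E^f(\nu)\le C\,\mathbb E[\mathcal E^f(\nu)]$ with probability close to $1$; on the intersection the energy form of the mass distribution principle, $\Haus_\infty^f(\operatorname{supp}\nu)\gtrsim\nu(Q)^2/\mathcal E^f(\nu)$, delivers the desired content bound, since for a ball $g_f(Q)=|Q|^2/\mathcal E^f(Q)\asymp\Haus_\infty^f(Q)$.

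Finally I would invoke the covering lemma underlying the Mass Transference Principle \cite{BV MTP}, in the form valid for general dimension functions: once a $\limsup$ set of open sets has $\Haus^f$--content comparable to $\Haus_\infty^f(Q)$ in every ball $Q$ (after absorbing constants, as the $G_f$--reduction permits), it satisfies $\Haus^f(E\cap Q)=\Haus^f(Q)$ for every ball $Q$; and since $\mathcal E^f(B(0,R))<\infty$ forces $f(r)/r^k\to\infty$, we have $\Haus^f(Q)=\infty$ for every ball (for instance because $\Haus^f$ then dominates a constant multiple of $\mathcal H^k$). Feeding in the content bound proved above (which, after the zero--one law, holds simultaneously for all $Q_j$ in the base) yields $\Haus^f(E\cap Q)=\infty$ for all balls $Q$, and hence $\Haus^f(E)=\infty$, almost surely, as required. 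I expect the hard part to be the off--diagonal energy estimate $\mathbb E[\mathcal E^f(\nu)]\lesssim\mathcal E^f(B(0,R))+o(1)$: balancing the weights $w_i$ against the divergence $\sum_i g_f(A_i)=\infty$, the non--distortion bounds \eqref{eq:distortionCond}, and the ambient finite--energy hypothesis is delicate, and it is precisely this balance that forces one to work with $G_f$ rather than $g_f$ --- i.e.\ to first replace each $A_i$ by a subset $L_i$ on which the local energy is favourable enough to make the final content bound strong enough for the covering lemma. A secondary technical nuisance is the construction and support identification of the weak limit $\nu$, which requires loading the weights onto increasingly late blocks.
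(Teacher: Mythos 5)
The survey states this theorem as a quoted result of Feng, J\"arvenp\"a\"a, J\"arvenp\"a\"a and Suomala and supplies no proof, so there is no in-paper argument to compare yours against line by line; the fair benchmark is the strategy of \cite{Feng15}, which generalises Persson's energy method from \cite{Persson15}. Measured against that, your architecture is essentially the right one: reducing $G_f$ to $g_f$ by choosing subsets $L_i\subseteq A_i$ (and your $\min\{G_f(A_i),i\}$ truncation does preserve divergence), observing that $E(T,a_i,A_i)$ is a tail random set so that Kolmogorov's zero--one law upgrades a positive-probability bound to an almost sure one, running a first/second-moment and $f$-energy computation on normalised block sums of $w_i\mathbf{1}_{T(a_i,A_i)}\,dV$ with $w_i\asymp|A_i|/\mathcal{E}^f(A_i)$ (so that the diagonal contribution is $\bigl(\sum_i g_f(A_i)\bigr)^{-1}\to 0$ and the off-diagonal contribution is controlled by independence and $\mathcal{E}^f(B(0,R))<\infty$), and finishing with Paley--Zygmund plus a content-to-measure upgrade of Mass Transference type. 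The observation that finiteness of $\mathcal{E}^f(B(0,R))$ forces $r^{-k}f(r)\to\infty$, hence $\Haus^f(Q)=\infty$ for every ball, is also correct for non-decreasing $f$.

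There are, however, two points where I think you have misjudged where the difficulty sits. First, the step you call ``a secondary technical nuisance'' --- identifying the support of the weak limit $\nu$ --- is in fact the crux. A weak subsequential limit of measures supported on $\bigcup_{i\ge N}T(a_i,A_i)$ is supported only on $\bigcap_N\overline{\bigcup_{i\ge N}T(a_i,A_i)}$, which may strictly contain the $\limsup$ set and may a priori carry all of the mass; no tail-loading or diagonal argument repairs this on its own. The resolution in \cite{Persson15, Feng15} is not to produce a measure \emph{on} $E$ at all but to prove a separate lemma bounding $\Haus^f_\infty(\limsup_i U_i\cap Q)$ from below directly in terms of the masses and energies of measures carried by the individual $U_i$; that lemma uses openness of the $U_i$ (or an inner-regularity reduction to compact pieces) in an essential way, which you never address even though the theorem assumes the $A_i$ only Lebesgue measurable. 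Second, your estimate $\Prob(x\in T(a_i,A_i))\asymp|A_i|$ requires an upper bound on the density of $\mu$, which ``not entirely singular with respect to Lebesgue'' does not provide: $\mu$ may have a singular part or an unbounded density. One must first decompose $\mu$ into a piece with density bounded above and below on a small ball plus a remainder, pass to the (random) subsequence of indices whose translations are drawn from the nice piece, and verify that $\sum g_f(A_i)$ still diverges along that subsequence. Neither issue is fatal --- both are handled in \cite{Feng15} --- but together they constitute most of the actual proof rather than footnotes to it.
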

Finally, a set $L$ has positive Lebesgue density if
\[
\liminf_{r\to 0}\frac{\lvert L\cap B(x,r)\rvert}{\lvert B(x,r)\rvert}>0
\]
for all $x\in L$.
\begin{theorem}[Feng -- J\"arvenp\"a\"a -- J\"arvenp\"a\"a -- Suomala~\cite{Feng15}]
	Let $f$ be a dimension function and recall that $V\subset \R^k$.
	Assume that $r^{-k+\epsilon}f(r)$ is decreasing in $r$ for some $\epsilon>0$. Let $h$ be a dimension function such that $h(r)\leq f(r)^{1+\delta}$ for some $\delta>0$ and all $r>0$. 	Under the same assumptions as in Theorem~\ref{thm:Feng1} and provided that the $A_i$ are Lebesgue measurable with positive Lebesgue density we obtain
	\[
	\sum_{i=1}^\infty G_f(A_i) <\infty   \quad \text{implies} \quad   \sum_{i=1}^\infty  \Haus_\infty ^h (A_i)<\infty.
	\]
\end{theorem}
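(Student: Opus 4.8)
Here is the plan.

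\medskip

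The plan is to reduce the statement to a pointwise comparison, namely
\[
\Haus_\infty^h(A_i)\;\le\; C\,g_f(A_i)\;\le\; C\,G_f(A_i)\qquad\text{for all but finitely many }i,
\]
with $C$ an absolute constant, and then to sum. The finitely many exceptional sets contribute only a finite amount, since every $A_i$ lies in the fixed compact set $\Delta$ and so satisfies the trivial bound $\Haus_\infty^h(A_i)\le h(\diam\Delta)<\infty$; that we are reduced to a ``small'' regime for large $i$ follows because $\sum_i G_f(A_i)<\infty$ forces $G_f(A_i)\to0$, and one checks (using the energy bound below) that $g_f(A_i)$ controls both $|A_i|$ and $\diam(A_i)$ from above once it is small. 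I would use the decay hypothesis ``$r^{-k+\epsilon}f(r)$ decreasing'' throughout in two guises: it forces $f(r)\ge c_0\,r^{k-\epsilon}$ on some interval $(0,r_0]$, and it gives the one-sided doubling $f(\lambda r)\le\lambda^{k-\epsilon}f(r)$ for $\lambda\ge1$. We may and do assume $\delta$ small, so that $(k-\epsilon)(1+\delta)-k<0$.

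To get the pointwise inequality I would, for a fixed $A=A_i$ with positive Lebesgue density, build a cover of $A$ by balls at adaptively chosen \emph{stopping scales}: for $x\in A$ let $\rho(x)=\inf\{\rho>0:|A\cap B(x,\rho)|<\kappa\,|B(x,\rho)|\}$ be the least radius at which the local density of $A$ at $x$ first drops below a fixed threshold $\kappa$. Positive density guarantees $\rho(x)>0$, and by continuity $|A\cap B(x,\rho(x))|=\kappa\,|B(x,\rho(x))|$, while for $\rho\le\rho(x)$ the density at $x$ is still at least $\kappa$. By the Besicovitch covering theorem the balls $\{B(x,\rho(x))\}_{x\in A}$ contain a countable subfamily $\{B_j=B(x_j,\rho_j)\}_j$ of bounded overlap that still covers $A$, and two facts then follow from bounded overlap: summing the stopping relation gives
\[
\sum_j\rho_j^{\,k}\;\asymp\;\sum_j|A\cap B_j|\;\le\; C_k\,|A|,
\]
and, since any two points of $B_j$ lie within distance $2\rho_j$ and $A$ fills a $\kappa$-fraction of $B_j$, the short-range part of the $f$-energy over each $B_j$ is at least of order $\rho_j^{\,2k}/f(\rho_j)$, whence
\[
\sum_j\frac{\rho_j^{\,2k}}{f(\rho_j)}\;\le\; C_{k,\epsilon,\kappa}\,\mathcal{E}^f(A).
\]

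The heart of the argument is to combine these with $h\le f^{1+\delta}$. Since $\Haus_\infty^h(A)\le\sum_j h(2\rho_j)\le C\sum_j f(\rho_j)^{1+\delta}$ (using $h\le f^{1+\delta}$ and the doubling of $f$), it suffices to show $\sum_j f(\rho_j)^{1+\delta}\le C\,|A|^2/\mathcal{E}^f(A)=C\,g_f(A)$. For this one also needs the complementary \emph{upper} bound on the energy, obtained from a layer-cake computation with $|A\cap B(x,\rho)|\le\min\{|B(x,\rho)|,|A|\}$ and the decay of $f$: $\mathcal{E}^f(A)\le C_{k,\epsilon}\,|A|\,\diam(A)^k/f(\diam A)$, so that $g_f(A)\ge c_{k,\epsilon}\,|A|\,f(\diam A)/\diam(A)^k$ and also $\mathcal{E}^f(A)\le C_{k,\epsilon}|A|$ for $\diam(A)$ small. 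Feeding the two displayed sums and this energy bound into a Cauchy--Schwarz/Hölder estimate --- splitting $\{B_j\}$ according to whether $\rho_j$ is large or small compared with $\diam(A)$, and using that $\rho\mapsto f(\rho)^{1+\delta}/\rho^{k}$ is essentially monotone along dyadic scales by the sign of $(k-\epsilon)(1+\delta)-k$ --- should yield $\sum_j f(\rho_j)^{1+\delta}\le C\,g_f(A)$, and hence $\Haus_\infty^h(A)\le C\,g_f(A)$.

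The step I expect to be the main obstacle is exactly this last one: extracting a clean bound on $\sum_j f(\rho_j)^{1+\delta}$ from the measure estimate $\sum_j\rho_j^{\,k}\le C_k|A|$, the energy estimate $\sum_j\rho_j^{\,2k}/f(\rho_j)\le C\,\mathcal{E}^f(A)$, the energy upper bound, and the $\delta$-gap, \emph{with constants that do not depend on the density constant of $A$}. The delicacy is that ``positive Lebesgue density'' is not assumed quantitatively uniform over $A$: the natural stopping scale $\rho(x)$ genuinely varies from point to point (for instance for a union of tiny well-separated balls), which is precisely why a multi-scale Besicovitch cover is needed rather than a single ball, and why one cannot simply apply a Frostman-measure duality to $A$ as a whole. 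A secondary, routine point is bookkeeping the passage to the small-diameter regime so that the pointwise inequality applies to all but finitely many $i$ and the trivial content bound disposes of the rest.
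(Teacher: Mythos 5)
This theorem is quoted in the survey from \cite{Feng15} without proof, so I am assessing your plan on its own merits. Your preparatory steps (the consequences of $r^{-k+\epsilon}f(r)$ decreasing, the density stopping-time, the Besicovitch cover, and the two estimates $\sum_j\rho_j^k\lesssim|A|$ and $\sum_j\rho_j^{2k}/f(\rho_j)\lesssim\mathcal{E}^f(A)$) are all sound. But the reduction on which the whole plan rests --- the pointwise inequality $\Haus^h_\infty(A_i)\le C\,g_f(A_i)$ --- is false, and no choice of constants or restriction to large $i$ can save it. Take $f(r)=r^s$ with $s=k-\epsilon$ and $h(r)=r^{s(1+\delta)}$, and let $A=B_1\cup A_2$, where $B_1$ is a single ball of radius $r_1$ and $A_2$ is a union of $N=r_2^{-s(1+\delta)}$ balls of radius $r_2$ centred at a maximal $N^{-1/k}$-separated set of a unit cube sitting at distance $\asymp 1$ from $B_1$. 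For $r_2$ sufficiently small (depending on $r_1$) one checks that $|A|\asymp r_1^k$ and $\mathcal{E}^f(A)\asymp\mathcal{E}^f(B_1)\asymp r_1^{2k-s}$, i.e.\ the single ball dominates both the measure and the energy, so $g_f(A)\asymp r_1^s$; yet the mass distribution principle applied to normalised Lebesgue measure on $A_2$ gives $\Haus^h_\infty(A)\ge\Haus^h_\infty(A_2)\asymp 1$. Letting $r_1\to 0$ kills $g_f(A)$ without touching $\Haus^h_\infty(A)$. Rescaling the whole configuration by $\lambda=r_1^{1/(2\delta)}$ shows the ratio $\Haus^h_\infty(A)/g_f(A)$ is unbounded even along sets with $G_f(A)$ arbitrarily small, so the failure persists in the ``small'' regime you reduce to. Your own stopping-time cover detects this: for this $A$ it produces $\sum_j f(\rho_j)^{1+\delta}\asymp 1+r_1^{s(1+\delta)}\asymp 1$, which cannot be bounded by $g_f(A)\asymp r_1^s$. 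So the step you flag as ``the main obstacle'' is not merely delicate; the inequality you are trying to prove there is false.

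This example also explains why the hypothesis is phrased with $G_f(A_i)=\sup_{L\subset A_i,\,|L|>0}g_f(L)$ rather than $g_f(A_i)$: in the example $G_f(A)\ge g_f(A_2)\asymp 1\asymp\Haus^h_\infty(A)$, because the supremum over subsets is allowed to discard the ball $B_1$ whose energy swamps $\mathcal{E}^f(A)$ while contributing nothing to the $h$-content. Any correct argument must exploit this: from your multi-scale cover $\{B_j\}$ one should extract a positive-measure subset $L\subset A$ (e.g.\ the union $\bigcup_{j\in J}(A\cap B_j)$ over a carefully chosen subfamily $J$ of stopping balls, those at the scales where $\sum_j h(\rho_j)$ concentrates, so that $\mathcal{E}^f(L)$ is controlled by the short-range energy of those balls alone) and prove $\Haus^h_\infty(A)\le C\,g_f(L)\le C\,G_f(A)$. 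Your secondary claim that small $g_f(A_i)$ forces small $\diam(A_i)$ is also false (in the example $\diam A\asymp 1$ throughout), though small $g_f$ does force small $|A|$ via $\mathcal{E}^f(A)\lesssim|A|^{2}f(|A|^{1/k})/|A|$. As written, the proposal does not prove the theorem.
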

As one can readily see, these latter results hold for $\limsup$ sets of very general subsets.
However, we still require positive Lebesgue density and a ``nice'' measure that is not singular with respect to the Lebesgue measure.
Recently Ekstr\"om and Persson~\cite{Ekstrom16} have made advances in relaxing these conditions on the measures by considering random $\limsup$ sets with random centres chosen according to an arbitrary Borel measure and formulating their results in terms of multifractal formalism. \\
\clearpage
\noindent {\bf Acknowledgements.}
A major portion of this manuscript was prepared when ST visited DA at the University of  York in December 2016. ST thanks York for their hospitality during his stay. DA would like to thank Victor Beresnevich for a number of interesting discussions prompting some of the original results presented in this article. Both authors are grateful to Victor Beresnevich, Henna Koivusalo and Sanju Velani for their helpful comments on earlier drafts of this article. 
At the time of writing DA and ST were supported, respectively, by EPSRC Doctoral Training Grants EP/M506680/1 and EP/K503162/1.


\end{document}